\newcommand{\arxiv}[1]{\iftoggle{neurips}{}{#1}}
\newcommand{\neurips}[1]{\iftoggle{neurips}{#1}{}}
\global\togglefalse{neurips}
	\newcommand{\citep}{\cite}
\newtheorem{theorem}{Theorem}
\newtheorem{corollary}{Corollary}
\newtheorem{lemma}{Lemma}
\newtheorem{definition}[theorem]{Definition}
\newcommand{\algcomment}[1]{  \textcolor{blue!70!black}{\footnotesize{\texttt{{//
					#1}}}}}
\newcommand\numberthis{\addtocounter{equation}{1}\tag{\theequation}}
\newcommand{\cC}{\mathcal{C}}
\newcommand{\cW}{\mathcal{W}}
\newcommand{\cB}{\mathcal{B}}
\newcommand{\cX}{\mathcal{X}}
\newcommand{\cY}{\mathcal{Y}}
\newcommand{\wtilde}{\widetilde}
\newcommand{\loss}{\ell}
\DeclareBoldMathCommand{\vloss}{\loss}
\DeclareBoldMathCommand{\grad}{g}
\DeclareBoldMathCommand{\fakegrad}{\mathring{\bm{g}}}
\DeclareBoldMathCommand{\e}{e}
\DeclareBoldMathCommand{\p}{p}
\DeclareBoldMathCommand{\u}{u}
\DeclareBoldMathCommand{\w}{w}
\DeclareBoldMathCommand{\x}{x}
\DeclareBoldMathCommand{\l}{l}
\DeclareBoldMathCommand{\vzero}{0}
\let\top\intercal
\newcommand{\reals}{\mathbb{R}}
\renewcommand{\x}{\bm{x}}   
\newcommand{\y}{\bm{y}}
\DeclareMathOperator*{\argmin}{argmin}
\newcommand{\cK}{\mathcal{K}}
\newcommand{\inte}{\operatorname{int}}
\newcommand*{\what}[1]{\widehat{#1}}
\newcommand{\nn}{\nonumber}
\newcommand{\diag}[1]{\Diag\left(#1\right)}
\newcommand{\mfinal}{m_{\text{final}}}
\newcommand{\sett}{\mathcal K} %
\newcommand{\veps}{\varepsilon}
\newcommand{\cO}{\mathcal{O}}
\newcommand{\cE}{\mathcal{E}}  
\newcommand{\cH}{\mathcal{H}}  
\renewcommand{\diag}{\mathrm{diag}}
\newcommand{\reg}{\mathrm{Reg}}
\newcommand{\hess}{\texttt{hess}}
\newcommand{\ihess}{\emph{\texttt{hess}}}
\renewcommand{\grad}{\texttt{grad}}
\newcommand{\igrad}{\emph{\texttt{grad}}}
\newcommand{\ls}{\texttt{LS}}
\DeclareMathOperator{\Tr}{tr}
\newcommand{\algo}{\texttt{BARONS}}
\newcommand{\ialgo}{\emph{\texttt{BARONS}}}
\g@addto@macro\bfseries{\boldmath}
\newcommand*\accentfontxheight[1]{%
	\fontdimen5\ifx#1\displaystyle
	\textfont
	\else\ifx#1\textstyle
	\textfont
	\else\ifx#1\scriptstyle
	\scriptfont
	\else
	\scriptscriptfont
	\fi\fi\fi3
}
\newcommand{\final}{\texttt{Newton}}
\newcommand{\ifinal}{\emph{\texttt{Newton}}}
\renewcommand{\mfinal}{m_{\final}}
\title{Projection-Free Online Convex Optimization via Efficient Newton Iterations}
	\author{Khashayar Gatmiry\\{\texttt{gatmiry@mit.com}} \and Zakaria Mhammedi\\{ \texttt{mhammedi@mit.edu}}
	}
	\date{}
\begin{document}

	\maketitle

	\begin{abstract}
This paper presents new projection-free algorithms for Online Convex Optimization (OCO) over a convex domain $\mathcal{K} \subset \mathbb{R}^d$. Classical OCO algorithms (such as Online Gradient Descent) typically need to perform Euclidean projections onto the convex set $\cK$ to ensure feasibility of their iterates. Alternative algorithms, such as those based on the Frank-Wolfe method, swap potentially-expensive Euclidean projections onto $\mathcal{K}$ for linear optimization over $\mathcal{K}$. However, such algorithms have a sub-optimal regret in OCO compared to projection-based algorithms. In this paper, we look at a third type of algorithms that output approximate Newton iterates using a self-concordant barrier for the set of interest. The use of a self-concordant barrier automatically ensures feasibility without the need for projections. However, the computation of the Newton iterates requires a matrix inverse, which can still be expensive. As our main contribution, we show how the stability of the Newton iterates can be leveraged to compute the inverse Hessian only a vanishing fraction of the rounds, leading to a new efficient projection-free OCO algorithm with a state-of-the-art regret bound.
	\end{abstract}

	\section{Introduction}
	\label{sec:intro}
	
	We consider the Online Convex Optimization (OCO) problem over a convex set $\sett \subset \reals^d$, in which a learner (algorithm) plays a game against an adaptive adversary for $T$ rounds. At each round $t\in[T]$, the learner picks $w_t \in \sett$ given knowledge of the history $\cH_{t-1}\coloneqq\{(\ell_s, w_s)\}_{s<t}$. Then, the adversary picks a convex loss function $\ell_t: \sett \rightarrow \mathbb R$ with the knowledge of $\cH_{t-1}$ and the iterate $w_t$, and the learner suffers loss $\ell_t(w_t)$ and proceeds to the next round. The goal of the learner is to minimize the regret after $T$ rounds:
	\begin{align}
		\reg_T(w) = \sum_{t=1}^T \ell_t(w_t) -\sum_{t=1}^T \ell_t(w),\nn %
	\end{align}
	against any comparator $w\in \sett$. The aim of this paper is to design computationally-efficient (projection-free) algorithms for OCO that enjoy the optimal (up to log-factor in $T$) $\wtilde O(\sqrt{T})$ regret. 
	
	The OCO framework captures many optimization settings relevant to machine learning applications. For example, OCO algorithms can be used in offline convex optimization as more computationally- and memory-efficient alternatives to interior-point and cutting plane methods whenever the dimension $d$ is large \cite{hazan2012,jaggi2013revisiting}. OCO algorithms are also often used in stochastic convex optimization, where the standard $O(\sqrt{T})$ regret (achieved by, e.g.~Online Gradient Descent) translates into the optimal $O(1/\sqrt{T})$ rate\footnote{This is the optimal rate when no further assumptions are made.} via the classical online-to-batch conversion technique \citep{cesa2004, shalev2011}. It has been shown that OCO algorithms can also achieve state-of-the-art accelerated rates in both the offline and stochastic optimization settings despite being designed for the more general OCO framework \cite{cutkosky2019,mhammedi2022efficient}. What is more, it has recently been shown that even non-convex (stochastic) optimization can be reduced to online linear optimization (a special case of OCO), where it is then possible to recover the best-known convergence rates for the setting  \cite{cutkosky2023optimal}. 
	
	Given the prevalent use of OCO algorithms in machine learning applications, it is important to have computationally-efficient algorithms that scale well with the dimension $d$ of the ambient space. However, most OCO algorithms fall short of being efficient because of the need of performing (Euclidean) projections onto $\cK$ (potentially at each iteration) to ensure that the iterates are feasible. These projections are often inefficient, especially in high-dimensional settings with complex feasible sets. Existing projection-free OCO algorithms address this computational challenge by swapping potentially-expensive Euclidean projections for often much cheaper linear optimization or separation over the feasible set $\cK$. However, existing projection-free algorithms have sub-optimal regret guarantees in terms of their dependence in $T$, or have potentially unbounded ``condition numbers'' for the feasible set multiplying their regret guarantee. 
	
	\paragraph{Contributions.}
	In this paper, we address these computational and performance challenges by revisiting an existing (but somewhat overlooked) type of projection-free OCO algorithms. Unlike existing algorithms, our proposed method does not require linear optimization or separation over the feasible set $\cK$. Instead, the algorithm, Barrier-Regularized Online Newton Step (\algo),\footnote{We credit the name \algo{} to \cite{luo2018efficient} who used barrier-regularized Newton steps for the portfolio selection problem.} uses a self-concordant barrier $\Phi$ for the set $\cK$ to always output iterates that are guaranteed to be within $\cK$; much like interior point methods for offline optimization. In particular, our algorithm outputs Newton iterates with respect to time-varying, translated versions of $\Phi$. The main novelty of our work is in devising a new efficient way of computing the Newton iterates without having to evaluate the inverse of the Hessian of the barrier at every iteration, which can be computationally expensive in high-dimensional settings. Our algorithm only needs to compute a full inverse of the Hessian a vanishing $O(1/\sqrt{T})$ fraction of the rounds. For the rest of the rounds, the computational cost is dominated by that of evaluating the gradient of the barrier $\Phi$, which can be much cheaper than evaluating the inverse of its Hessian in many cases. 
	
	For the special case of a polytope with $m$ constraints, we show that there is a choice of a barrier (e.g.~the Lee-Sidford barrier) that when used within our algorithm, reduces the per-round computational cost to essentially $\wtilde{O}(1)$ linear-system-solves of size $m\times d$. We show that this is often cheaper than performing linear optimization over $\cK$, which other projection-free algorithms require. More importantly, our algorithm achieves a \emph{dimension-free} $\wtilde{O}(\sqrt{T})$ regret bound. This improves over the existing regret bounds of projection-free algorithms over polytopes. For example, among projection-free algorithms that achieve a $O(\sqrt{T})$ regret, the algorithms by \cite{mhammedi2022efficient,garber2022new,lu2023projection}, which require a separation/membership Oracle for $\cK$, have a multiplicative $\kappa =R/r$ factor multiplying their regret bounds, where $r,R>0$ are such that $\cB(r)\subseteq \cK \subseteq \cB(R)$. The constant $\kappa$, known as the \emph{asphercity} \cite{goffin1988affine}, can in principle be arbitrarily large. Even after applying a potentially expensive pre-processing step, which would typically involve putting the set $\cK$ into (near-) isotropic position \cite{flaxman2005online,tkoczasymptotic}, $\kappa$ can still be as large as $d$ in the worst-case, and so the regret bounds achieved by the algorithms of \cite{mhammedi2022efficient,garber2022new,lu2023projection} can be of order $O(d\sqrt{T})$; this is worse than ours by a $d$ factor. Other projection-free algorithms based on the Frank-Wolfe method, e.g.~those in \cite{garber2016linearly,pedregosa2020linearly,lacoste2015global}, also have multiplicative condition numbers that are even less benign that the asphercity $\kappa$. In fact, the condition numbers in the regret bounds for polytopes appearing in, e.g.~\cite{garber2016linearly}, can in principle be arbitrarily large regardless of any pre-processing. 
	
	Finally, another advantage of our algorithm is that it can guarantee a sublinear regret even for non-Lipschitz losses (i.e.~where the norm of the sub-gradients may be unbounded). In particular, we show that the general guarantee of \algo{} implies a $\wtilde O(\sqrt{dT})$ regret bound for the portfolio selection problem \cite{cover1991universal} and a problem of linear prediction with log-loss \cite{rakhlin2015sequential}, all while keeping the per-round computational cost under $\wtilde{O}(d^2)$, when $T\geq d$. The losses in both of these problems are neither bounded or Lispchitz.

	\paragraph{Related works.}
	In the past decade, many projection-free OCO algorithms have been developed to address the computational shortcoming of their projection-based counter parts \cite{hazan2012,jaggi2013revisiting,hazan2020,kretzu2021,mhammedi2022efficient,garber2022new}. Most projection-free algorithms are based on the Frank-Wolfe method and perform linear optimization (typically once per round) over $\cK$ instead of Euclidean projection. Under no additional assumptions other than convexity and lipschitzness of the losses, the best-known regret bound for such algorithms scales as $O(T^{3/4})$ \cite{hazan2020}. While this bound is still sublinear in $T$ and has no dependence in the dimension $d$, it is sub-optimal compared to the $O(\sqrt{T})$ regret bound achievable with projection-based algorithms. In the recent years, there have been improvements to this bound under additional assumptions such as when the functions are smooth and/or strongly convex \cite{hazan2020,kretzu2021}, or when the convex set $\cK$ is smooth and/or strongly convex \cite{abernethy2018faster,levy2019,mhammedi2022exploiting,liu2023gauges}. For the case where $\cK$ is a polytope, \cite{garber2016linearly} presented a linear-optimization-based algorithm that enjoys a $O(\mu \sqrt{d T})$ regret bound, where $\mu$ is a conditioning number for the set $\cK$. Unfortunately, $\mu$ can be large for many sets of interests as it essentially scales inversely with the minimum distance between the vertices of $\cK$. In this work, we achieve a \emph{dimension-free} $\wtilde O(\sqrt{T})$ regret bound without the $\mu$ factor.
	
	More recently a new type of projection-free algorithms have emerged which use membership/separation oracle calls instead of linear optimization \cite{mhammedi2022efficient,garber2022new,liu2023gauges,lu2023projection}. From a computational perspective, separation-based and linear optimization-based algorithms are not really comparable, since there are sets over which separation is cheaper than linear optimization, and vice-versa. On the regret side, separation-based algorithms have been show to achieve a $O(\kappa \sqrt{T})$ regret bound, where $\kappa$ is the asphercity of the set $\cK$. Separation-based algorithms are simple, often easy to analyze, and achieve the optimal-in-$T$ regret bound, unlike linear optimization-based algorithms. However, the multiplicative factor $\kappa$ in their regret bounds means that a pre-conditioning step may be required to ensure it is appropriately bounded. This precondition step would involve putting the set into (near-) isotropic position \cite{flaxman2005online}; an operation, that can cost $\wtilde O(d^4)$ arithmetic operations \cite{tkoczasymptotic}; and even after such a pre-processing step, $\kappa$ can still be as large as $d$ in the worst-case. Our algorithm has the benefit of not requiring any pre-processing step.
	
	A third type of algorithms avoid projections by outputting Newton iterates that are guaranteed to be feasible thanks to the use of a self-concordant barrier. The first such algorithm in the context of online learning was introduced by \cite{abernethy2009competing}. They presented a general recipe for using self-concordant barriers with Newton steps in online linear optimization. However, their approach falls short of being computationally-efficient as their algorithm needs to compute the inverse of the Hessian of the barrier at every iteration. Inspired by the work of \cite{abernethy2009competing}, \cite{mhammedi2022Newton} used damped Newton steps with quadratic terms added to the barrier to design an efficient algorithm for the classical portfolio selection problem. Closer to our work is that of \cite{mhammedi2022quasi} who used a similar barrier for designing an algorithm for exp-concave optimization that can be viewed as a computationally-efficient version of the Online Newton Step \cite{hazan2007logarithmic}. Similar to our work, \cite{mhammedi2022quasi} also leverage the stability of the Newton iterates to avoid computing the inverse of the Hessian of the barrier at every step. However, their approach and analysis, which are tailored to the exp-concave setting do not necessarily lead to improved regret bounds in the general OCO setting we consider. In particular, their algorithm does not lead to a $O(\sqrt{T})$ regret bound over polytopes.   
	
	Finally, for our application to polytopes, we make use of recent tools and techniques developed for solving linear programs efficiently. In particular, we make use of the Lee-Sidford barrier \cite{lee2014path,lee2015efficient,lee2019solving}, which can be computed efficiently and, when used to compute Newton iterates, leads to the state-of-the-art $\wtilde{O}(\sqrt{d})$ iteration upper-bound for solving a linear program. For the OCO setting, we show that using the Lee-Sidford barrier within our algorithm leads to a $\wtilde{O}(\sqrt{T})$ regret bound. %
	We also note that ideas similar to the ones we use to avoid computing the inverse of the Hessian of the barrier at every round were used to amortize computations in the context of solving linear programs (see e.g.~\cite{cohen2021solving,van2020deterministic,van2020solving}).
	
	\paragraph{Outline.}
	In section \ref{sec:prelim}, we present our notation and relevant definitions. In Section \ref{sec:main}, we present our algorithm and guarantees. In Section \ref{sec:polytopee}, we apply our results to the case of a polytope. All the proof are differed to the appendix.

	\section{Preliminaries}
	\label{sec:prelim}
	Throughout the paper, we let $\cK$ be a closed convex subset of $\reals^d$. We denote by $\|\cdot\|$ the Euclidean norm and by $\cB(R)\subset \reals^d$ the Euclidean ball of radius $R>0$. We let $\inte \cK$ denote the interior of $\cK$. 
	
	Our main algorithm, which can be viewed as an ``online'' counter-part to the Newton iterations \cite{nesterov2018lectures}, uses a self-concordant barrier over the set of interest to avoid the need of performing Euclidean projections onto $\cK$. Next, we present the definition of a self-concordant barrier.
	
	\paragraph{Self-concordant barriers.} 
	For the rest of this section, we let $\cK$ be a convex compact set with non-empty interior $\inte \cK$. For a twice [resp.~thrice] differentiable function, we let $\nabla^2 f(\u)$ [resp.~$\nabla^3 f(\u)$] be the Hessian [resp.~third derivative tensor] of $f$ at $\u$. 
	\begin{definition}[Self-concordant function]\label{def:seffunction}
		A convex function $f\colon \inte \cK \rightarrow \reals$ is called \emph{self-concordant} with constant $M_f\geq 0$, if $f$ is $C^3$ and satisfies \begin{itemize}
			\item $f(x_k)\to +\infty$ for $x_k \to x\in \partial \cK${\em ;} and 
		\item For all $x\in  \inte\cK$ and $u \in \reals^d$, $|\nabla^3f(x)[u,u,u]| \leq 2 M_f \|u\|^3_{\nabla^2 f(x)}$.
	\end{itemize}
	\end{definition}
	
	\begin{definition}[Self-concordant barrier]
	For $M_f,\nu\geq 0$, we say that $f\colon \inte \cK \rightarrow \reals$ is a $(M_{f},\nu)$-\emph{self-concordant barrier} for $\cK$ if $f$ is a self-concordant function over $\cK$ with constant $M_f$ and 	
		\begin{align*}
			\forall w \in \inte \cK,\quad 	{\nabla f(w)}^\top \nabla^{-2}f(w) \nabla f(w) \leq \nu.
		\end{align*}
	\end{definition}
	\paragraph{Computational Oracles.}
	We will assume that our algorithm has access to a self-concordant function over the set $\cK$ through the following gradient and Hessian Oracles.
	\begin{definition}[Gradient Oracle]
		\label{def:gradoracle}
		Given a point $w\in \inte \cK$ and a tolerance $\veps>0$, the gradient Oracle $\cO^{\igrad}_{\veps}(\Phi)$ returns an $\veps$-approximate vector $\what \nabla_w$ of the gradient $\nabla \Phi(w)$ in the dual local norm of the Hessian:
		\begin{align}
			\|\what \nabla_w - \nabla \Phi(w)\|_{\nabla^{-2} \Phi(w)} \leq \veps.\nn %
		\end{align}
		We denote by $\cC^{\igrad}_\veps(\Phi)$ the computational cost of one call to $\cO^\igrad_{\veps}(\Phi)$.
	\end{definition}
	When clear from the context, we will simply write $\cC^{\grad}_\veps$ and $\cO^\grad_{\veps}$ for $\cC^{\grad}_\veps(\Phi)$ and $\cO^\grad_{\veps}(\Phi)$, respectively.
	\begin{definition}[Hessian Oracle]
		\label{def:hesoracle}
		Given a point $w\in \inte \cK$ and a tolerance $\veps>0$, the Hessian Oracle $\cO^{\ihess}_{\veps}(\Phi)$ returns a matrix $H$ and its inverse $H^{-1}$ which are $1 \pm \veps$ spectral approximations of the Hessian and inverse Hessian of $\Phi$ at $w$:
		\begin{align}
			(1-\veps)\nabla^2 \Phi(w) \preccurlyeq H \preccurlyeq (1+\veps)\nabla^2 \Phi(w)  \quad  \text{and}\quad  (1-\veps)\nabla^{-2} \Phi(w) \preccurlyeq H^{-1} \preccurlyeq (1+\veps)\nabla^{-2} \Phi(w).\nn %
		\end{align}
		We denote by $\cC^{\ihess}_\veps(\Phi)$ the computational cost of one call to $\cO^\ihess_{\veps}(\Phi)$. 
	\end{definition}
	When clear from the context, we will simply write $\cC^{\hess}_\veps$ and $\cO^\hess_{\veps}$ for $\cC^{\hess}_\veps(\Phi)$ and $\cO^\hess_{\veps}(\Phi)$, respectively.
	\paragraph{Additional notation.}
	We use the notation $f \lesssim g$ to mean $f \leq C g$ for some universal constant $C>0$. We also write $f \le \wtilde{O}g$ to mean $f\leq \mathrm{polylog}(T,d)\cdot g$. We let $\nabla^{-2} \coloneqq (\nabla^2)^{-1}$ and $\nabla^{-1/2}$ refer to the inverse of the Hessian and the inverse of the square root of the Hessian, respectively.
	
	\section{Algorithm and Regret Guarantees}
	\label{sec:main}
	In this section, we construct a projection-free algorithm for Online Convex Optimization. The algorithm in question (Alg.~\ref{algorithmbox}) outputs approximate Newton iterates with respect to ``potential functions'' $(\Phi_t)$ that take the following form:
	\begin{align}
		\Phi_t(w) \coloneqq \Phi(w) + w^\top \sum_{s=1}^{t-1} g_s,\nn %
	\end{align}
	where $(g_s\in \partial \ell_s(w_s))$ are the sub-gradients of the losses $(\ell_s)$ at the iterates $(w_s)$ of Algorithm \ref{algorithmbox}, and $\Phi$ is a self-concordant function over $\cK$.  Algorithm~\ref{algorithmbox} uses the the approximate gradient and Hessian Oracles of $\Phi$ (see \ref{sec:prelim}) to output iterates $(w_t)$ approximate Newton iterates in the following sense: 
	\begin{align}
		\forall t\in[T],\quad 
		w_{t+1}\approx w_t - \nabla^{-2}\Phi_{t+1}(w_t)\nabla\Phi_{t+1}(w_t). \label{eq:newton}
	\end{align}
	As is by now somewhat standard in the analyses of online Newton iterates of the form in \eqref{eq:newton}, we will bound the regret of Algorithm \ref{algorithmbox} by showing that:
	\begin{itemize}[leftmargin=15pt]
		\item The iterates $(w_t)$ are close (in the norm induced by the Hessian $\nabla^2 \Phi(w_t)$) to the FTRL iterates, which are given by 
		\begin{align}
			w_t^\star \in \argmin_{w\in \cK} \Phi_t(w).
		\end{align}
		\item The regret of FTRL is bounded by $O(\sqrt{T})$.
	\end{itemize}
	Our main contribution is an algorithm that outputs iterates $(w_t)$ that satisfy the first bullet point (i.e.~iterates that satisfy \eqref{eq:newton}) while only calling a Hessian Oracle (which is potentially computationally expensive) a $O(1/\sqrt{T})$ fraction of the rounds after $T$ rounds. As we show in Section \ref{sec:polytopee}, for the case where $\cK$ is a polytope with $m\in \mathbb{N}$ constraints, the algorithm achieves a $\wtilde O(\sqrt{T})$ regret bound, where the per-iteration computational cost essentially reduces to a linear-system-solve involving a $d\times m$ matrix. Among existing OCO algorithms that achieve a $\wtilde O(\sqrt{T})$ regret bound, none can achieve this computational complexity for general polytopes with $m$ constraints (see Section \ref{sec:polytopee} for more details). 
	
	\subsection{Efficient Computation of the Newton Iterates with \algo}
	
	The key feature of \algo{} (Algorithm \ref{algorithmbox}) is that is uses an amortized computation of the Hessians. Namely, \algo{} computes the inverse of the Hessian of the barrier $\Phi$ only for a small fractions of the iterates $(w_t)$. Henceforth, we refer to the iterates where the algorithm computes the full inverse of the Hessian as \emph{landmark iterates}; these are the iterates $(u_t)$ in Lines \ref{line:eat} and \ref{line:cake} of Algorithm \ref{algorithmbox}. The idea behind this is that for a sufficiently curved\footnote{Informally, the ``curvature'' of a convex function is high when the rate of change of its gradients is high.} barrier $\Phi$, the Newton iterates with respect to $\Phi$ are stable enough that it suffices to compute the inverse of the Hessian of $\Phi$ at the closest landmark iterate. For example, this is what was done in \cite{mhammedi2022quasi} to design an efficient algorithm for exp-concave optimization. 
	
	Unlike the setting of \cite{mhammedi2022quasi}, where it is possible to add quadratic terms to the barrier for additional stability, in our setting we cannot do that without sacrificing performance in terms of regret. Without the quadratic terms, the Newton iterates are not stable enough for our desired guarantee. Instead of adding regularization terms, \algo{} takes $\wtilde{O}(1)$ Newton steps per round to get ``closer'' to the Newton iterate with the true Hessian matrix. This simple approach is key to the success of our approach. 
	
	In the next subsection, we give a generic guarantee for \algo{}.

	\begin{algorithm}[t]
		\caption{\algo: Barrier-Regularized Online Newton Step}
		\label{algorithmbox}
		\begin{algorithmic}[1]
			\REQUIRE~
			\begin{itemize}[leftmargin=*]
				\item Parameters $\eta, \veps, \alpha>0$, and $m_\final\geq 1$.
				\item Gradient/Hessian Oracles $\cO^{\grad}_{\veps}/\cO^{\hess}_{\alpha}$ for self-concordant function $\Phi$ with constant $M_\Phi>0$.
				\item $w^\star \in \argmin_{w\in \cK} \Phi(w)$.
			\end{itemize}
			\STATE Set $u_1=w_1 = w^\star$, $H_1 =  \cO^{\hess}_{\alpha}(w^\star)$, and $s_0={0}$.
			\FOR{$t=1,\dots, T$}
			\STATE Play $w_t$ and observe $g_t \in \partial \ell_t(w_t)$.
			\STATE \label{line:dil} Set $s_t = s_{t-1}+ \eta g_t$. 
			\STATE Set $w_{t+1}^{1}= w_t$.
			\item[] \algcomment{Perform intermediate Newton steps to decrease the Newton decrement}
			\FOR{$m=1,\dots,\mfinal-1$}
			\STATE Set $\what \nabla_{t+1}^m = \cO^{\grad}_{\veps}(w_{t+1}^{m})$.
			\STATE  \label{line:purge} Set $\wtilde\nabla_{t+1}^{m}=  \what \nabla_{t+1}^{m} + s_t$.
			\algcomment{$\wtilde\nabla_{t+1}^{m} \approx \nabla \Phi_{t+1}(w_{t+1}^{m})$}
			\STATE Set $w_{t+1}^{m+1}= w_{t+1}^{m} - H_{t}^{-1}\wtilde \nabla_{t+1}^{m}$.
			\ENDFOR
			\STATE Set $w_{t+1}=w_{t+1}^{m+1}$.
			\item[] \algcomment{Check if the landmark needs updating}
			\IF{$\|w_{t+1} - u_{t}\|_{H_t} \leq  1/(41 M_\Phi) $}  \label{eq:lineeight}
			\STATE Set $u_{t+1}=u_{t}$. \algcomment{Update landmark}  \label{line:eat}
			\STATE $H_{t+1} = H_t$ and $H^{-1}_{t+1} = H^{-1}_t$.
			\ELSE 
			\STATE Set $u_{t+1}=w_{t+1}$. \algcomment{No landmark update}  \label{line:cake}
			\STATE Set $H_{t+1}=\cO^{\hess}_{\alpha}(u_{t+1})$ and compute $H^{-1}_{t+1}$. \label{line:endmark2}
			\ENDIF 
			\ENDFOR
		\end{algorithmic}
	\end{algorithm}
	\subsection{Generic Regret Guarantee of \algo}
	\label{sec:mainreg}
	In this subsection, we present a general regret and computational guarantee for \algo{} under minimal assumptions on the sequence of losses and without turning the ``step size'' $\eta$. In the next subsection, we will instantiate the regret guarantee when additional assumptions on the sequence of losses are available. We now state the main guarantee of \algo{} (the proof in Appendix \ref{app:mainproof}).
	\begin{theorem}[Master theorem]\label{thm:main}
		Let $\Phi$ be a self-concordant function over $\sett$ with constant $M_{\Phi}>0$, and let $b,\eta,\veps, \alpha>0$ and $m_\ifinal\in \mathbb{N}$ be such that $\eta \leq \frac{1}{1000  b M_\Phi}$, $\veps \leq \frac{1}{20000 M_\Phi}$, $\alpha=0.001$, and $m_{\ifinal} \coloneqq\Theta(\log \frac{1}{\veps M_\Phi})$. Further, let $(w_t)$ be the iterates of Algorithm \ref{algorithmbox} with input ($\eta$, $\veps$, $\alpha$, $m_\ifinal$) and suppose that the corresponding sub-gradients $(g_t)$ satisfy $\|g_t\|_{\nabla^{-2} \Phi(w_t)} \leq b$, for all $t\geq 1$. Then, the regret of Algorithm~\ref{algorithmbox} is bounded as:
		\begin{align}
			\sum_{t=1}^T (\ell_t(w_t)- \ell_t(w)) \lesssim \frac{1}{\eta}\Phi(w)\ + \eta \sum_{t=1}^T \|g_t\|_{\nabla^{-2}\Phi(w_t)}^2 + \veps \sum_{t=1}^T \|g_t\|_{\nabla^{-2}\Phi(w_t)}, \quad \forall w \in \inte \sett. \label{eq:regretbound}
		\end{align}
		Furthermore, the computational cost of the algorithm is bounded by 
		\begin{align*}
			O\left( (\cC^{\igrad}_{\veps} + d^2)\cdot T\cdot \log \frac{1}{\veps M_\Phi} +  \cC^{\ihess}_{\alpha}\cdot \left(M_\Phi T\veps + M_\Phi \sum_{t=1}^T  \eta\|g_{t}\|_{\nabla^{-2}\Phi(w_{t})}\right) \right).
		\end{align*}
	\end{theorem}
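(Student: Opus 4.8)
The plan is to split the argument into a regret part and a computational part, with the key technical bridge being that the iterates $(w_t)$ produced by \algo{} are good approximate minimizers of the potentials $\Phi_t$. First I would establish the \emph{stability/approximation claim}: there is a constant so that for every $t$, $\|w_t - w_t^\star\|_{\nabla^2\Phi(w_t)} \lesssim \veps$ (or some similarly small quantity), where $w_t^\star = \argmin_{w\in\cK}\Phi_t(w)$ is the FTRL iterate. This is where the bulk of the work lies. The mechanism is a coupled induction over $t$: (i) between consecutive rounds, the potential changes only by the linear term $\eta g_t^\top w$, and since $\|\eta g_t\|_{\nabla^{-2}\Phi(w_t)} \le \eta b \le 1/(1000 M_\Phi)$, the new minimizer $w_{t+1}^\star$ is within a small local-norm ball of $w_t^\star$, hence of $w_{t+1}^1 = w_t$; (ii) within a round, the $m_\final - 1$ inner Newton steps each contract the Newton decrement geometrically, by the standard self-concordant Newton analysis (e.g.\ Nesterov), \emph{provided} the Hessian surrogate $H_t$ used in place of $\nabla^2\Phi_{t+1}$ is a good spectral approximation. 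Point (ii) in turn requires that the landmark $u_t$ is close to $w_{t+1}^m$ in the $H_t$-norm: the landmark-update test in Line~\ref{eq:lineeight} enforces $\|w_{t+1}-u_t\|_{H_t}\le 1/(41 M_\Phi)$, and self-concordance of $\Phi$ converts this bounded local-norm distance into the spectral bound $(1\pm O(1/41))\,\nabla^2\Phi(u_t)\preccurlyeq \nabla^2\Phi(w_{t+1}^m)\preccurlyeq(1\pm O(1/41))\,\nabla^2\Phi(u_t)$, which composed with the Oracle tolerance $\alpha=0.001$ gives a constant-factor spectral approximation — enough for damped/undamped Newton contraction. Choosing $m_\final = \Theta(\log\frac{1}{\veps M_\Phi})$ drives the Newton decrement below $O(\veps M_\Phi)$, and the approximate-gradient error $\veps$ in $\wtilde\nabla_{t+1}^m$ contributes an additive $O(\veps)$ to the final decrement; translating decrement to distance (again via self-concordance) yields the claim $\|w_t - w_t^\star\|_{\nabla^2\Phi(w_t)}\lesssim \veps$.

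With the approximation claim in hand, the regret bound follows from a now-standard FTRL-with-Newton argument. I would write $\sum_t(\ell_t(w_t)-\ell_t(w))\le \sum_t g_t^\top(w_t - w)$ by convexity, then add and subtract the FTRL iterates: $g_t^\top(w_t - w) = g_t^\top(w_t - w_t^\star) + g_t^\top(w_t^\star - w)$. The first term is controlled by Cauchy–Schwarz in the local norm, $|g_t^\top(w_t - w_t^\star)| \le \|g_t\|_{\nabla^{-2}\Phi(w_t)}\,\|w_t - w_t^\star\|_{\nabla^2\Phi(w_t)} \lesssim \veps\|g_t\|_{\nabla^{-2}\Phi(w_t)}$, which is exactly the third term of \eqref{eq:regretbound}. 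The second term is the regret of $\frac1\eta$-scaled FTRL on the regularizer $\Phi$ with linear losses $\eta g_t$: the standard "be-the-leader / follow-the-leader" telescoping plus the local stability of consecutive FTRL minimizers gives $\sum_t g_t^\top(w_t^\star - w) \lesssim \frac1\eta\Phi(w) + \eta\sum_t\|g_t\|_{\nabla^{-2}\Phi(w_t)}^2$ — here the per-step FTRL term $g_t^\top(w_t^\star - w_{t+1}^\star)$ is bounded by $\|g_t\|_{\nabla^{-2}\Phi(w_t)}\cdot\|w_t^\star - w_{t+1}^\star\|_{\nabla^2\Phi(w_t)}\lesssim \eta\|g_t\|^2_{\nabla^{-2}\Phi(w_t)}$ since the minimizer moves by $O(\eta\|g_t\|_{\nabla^{-2}\Phi})$ in local norm, and the $\frac1\eta\Phi(w)$ comes from the Bregman-divergence initialization term of FTRL (using $w_1=w^\star$ so the $\Phi(w_1)$ contribution vanishes or is absorbed). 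Summing the two pieces yields \eqref{eq:regretbound}. Validity at $w\in\inte\sett$ (rather than all of $\sett$) is automatic since $\Phi(w)$ is finite only on the interior.

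For the computational bound, I would count the two kinds of work separately. Each round does at most $m_\final = O(\log\frac{1}{\veps M_\Phi})$ inner iterations, each costing one gradient Oracle call $\cC^\grad_\veps$ plus an $O(d^2)$ matrix–vector multiply by the stored $H_t^{-1}$; summed over $T$ rounds this is the first term $O((\cC^\grad_\veps + d^2)\cdot T\cdot\log\frac{1}{\veps M_\Phi})$. The Hessian Oracle is invoked only when the landmark test fails, i.e.\ when $\|w_{t+1}-u_t\|_{H_t} > 1/(41 M_\Phi)$. To bound the number of such failures I would use a potential/path-length argument: between two consecutive landmark updates, the total local-norm movement of the iterates must exceed a constant $\Omega(1/M_\Phi)$ (triangle inequality from the failed test), while the total movement over all rounds is controlled — each round the iterate moves by the inner-Newton displacement, which after the contraction analysis is dominated by the "drift" $\|w_{t+1}^\star - w_t^\star\|\lesssim \eta\|g_t\|_{\nabla^{-2}\Phi(w_t)}$ plus a term of order $\veps$ from the residual decrement. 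Hence the number of landmark updates is $O\!\big(M_\Phi(\veps T + \sum_t \eta\|g_t\|_{\nabla^{-2}\Phi(w_t)})\big)$, each costing $\cC^\hess_\alpha$, giving the second term. The main obstacle, and the place I would spend the most care, is the coupled induction of the first paragraph: making the three error sources — FTRL drift $O(\eta b)$, landmark staleness $O(1/41)$ turned into a spectral constant, and gradient-Oracle error $O(\veps)$ — fit together so that $m_\final$ Newton steps genuinely leave a residual of size $O(\veps)$ and not something that degrades with $t$; this is exactly why the hypotheses pin $\eta b$, $\veps$, and $\alpha$ to small explicit constants against $M_\Phi$, and getting the constants to close the loop is the delicate part.
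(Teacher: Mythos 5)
Your proposal is correct and follows essentially the same route as the paper: a coupled induction showing the iterates track the FTRL minimizers $w_t^\star$ to within $O(\veps)$ in local norm (via geometric contraction of the Newton decrement under approximate gradients and landmark Hessians, which is the content of the paper's Lemmas \ref{lem:Newtondec} and \ref{lem:Newtonupdates}), then the decomposition $g_t^\top(w_t-w)=g_t^\top(w_t-w_t^\star)+g_t^\top(w_t^\star-w)$ combined with the standard FTRL bound, and finally the same path-length counting of landmark updates (per-round drift $2\eta\|g_t\|_{\nabla^{-2}\Phi(w_t)}+O(\veps)$ against the $1/(41M_\Phi)$ threshold) to bound the number of Hessian-Oracle calls.
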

	Theorem~\ref{thm:main} essentially shows that it is possible to achieve the same regret as FTRL, while only computing the inverse of the Hessian of $\Phi$ at most $\wtilde O(M_\Phi T\eta)$ number of times.

	\subsection{Regret Guarantee Under Local and Euclidean Norm Bounds on the Sub-Gradients}
	We now instantiate the guarantee in Theorem~\ref{thm:main} with a $(M_\Phi, \nu)$-self-concordant barrier $\Phi$ for the set $\sett$, with respect to which the local norms of the sub-gradients are bounded; that is, when $\|g_t\|_{\nabla^{-2}\Phi(w_t)}\leq b$. We note that the regret bound in \eqref{eq:regbound} has an additive $\Phi(w)$ which may be unbounded near the boundary of $\sett$. However, it is still possible to compete against comparators in $\inte \sett$ by making additional assumptions on the range of the losses \cite{luo2018efficient,mhammedi2022damped}. We discuss some of these assumptions in the sequel. For the next theorem, we will state the regret bound of \algo{} relative to comparators in the restricted set:
	\begin{align}
		\sett_{c} \coloneqq (1-c) \sett \oplus \{ c w^\star\} , \label{eq:shrankset}
	\end{align} 
	where $\oplus$ denotes the Minkowski sum, $w^\star \in   \argmin_{w\in \cK} \Phi(w)$, and $c\in(0,1)$ is a parameter.
	
	With this, we now state a regret bound for \algo{} when the sub-gradients of the losses have bounded local norms. The proof of the next theorem is in Appendix \ref{app:localnorms}.
	\begin{theorem}[Local norm bound]\label{cor:localnorms}
		Let $\Phi$ be an $(M_\Phi, \nu)$-self-concordant barrier for $\sett$ and let $c\in(0,1),b>0$. Further, suppose that for all $t\in[T]$, $\|g_t\|_{\nabla^{-2} \Phi(w_t)} \leq b$, where $(w_t)$ are the iterates of $\ialgo$ with input parameters $(\eta, \veps, \alpha,m_\ifinal)$ such that 
		\begin{align}
			\eta \coloneqq \sqrt{\frac{\nu \log c}{b^2T}}, \quad   \veps \coloneqq \sqrt{\frac{\nu}{T}}, \quad \alpha \coloneqq 0.001, \quad \text{and} \quad m_{\ifinal} \coloneqq \Theta\left(\log \frac{1}{\veps M_{\Phi}}\right).	 \label{eq:choice}
		\end{align}
		For $T\ge 1$ large enough such that $\eta \leq \frac{1}{1000  b M_\Phi}$, $\veps \leq \frac{1}{20000 M_\Phi}$, the regret of \ialgo{} is bounded as
		\begin{align}
			\reg^{\ialgo}_T(w) \lesssim b \sqrt{\nu T\log c },\quad \forall w\in  \cK_c,
			\label{eq:locbound}
		\end{align}
		where $\sett_c\subset \sett$ is as in \eqref{eq:shrankset}. Further, the computational complexity of \ialgo{} in this case is bounded by \[O\left(\left(\cC^{\igrad}_{\veps} + d^2\right)\cdot T\cdot \log \frac{T}{\nu M_\Phi} + \cC^{\ihess}_{\alpha}\cdot M_\Phi\sqrt{T\nu \log c}\right).\] 
	\end{theorem}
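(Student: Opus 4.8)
The statement is a direct corollary of the master theorem (Theorem~\ref{thm:main}): the plan is to instantiate the regret bound \eqref{eq:regretbound} with the parameter choices \eqref{eq:choice}, use the local-norm hypothesis $\|g_t\|_{\nabla^{-2}\Phi(w_t)}\le b$ to collapse the two data-dependent sums, and invoke one classical property of self-concordant barriers to control the leading $\Phi(w)$ term over the shrunken comparator set $\sett_c$. Since \algo{} queries $\Phi$ only through $\nabla\Phi$ and $\nabla^2\Phi$, I may normalise $\Phi(w^\star)=0$ without altering the iterates or the regret, so that the leading term of \eqref{eq:regretbound} reads $\tfrac1\eta\big(\Phi(w)-\Phi(w^\star)\big)\ge 0$.

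\textbf{Invoking the master theorem.}
First I would check the hypotheses of Theorem~\ref{thm:main}. With $\eta=\sqrt{\nu\log(1/c)/(b^2T)}$, the requirement $\eta\le\tfrac1{1000bM_\Phi}$ is equivalent to $T$ being at least a universal constant multiple of $M_\Phi^2\nu\log(1/c)$; with $\veps=\sqrt{\nu/T}$, the requirement $\veps\le\tfrac1{20000M_\Phi}$ is equivalent to $T$ being at least a universal constant multiple of $M_\Phi^2\nu$; both hold for $T$ large enough, which is the standing assumption, and the choices of $\alpha$ and $m_{\final}$ match verbatim. Applying Theorem~\ref{thm:main} and bounding $\sum_t\|g_t\|_{\nabla^{-2}\Phi(w_t)}^2\le b^2T$ and $\sum_t\|g_t\|_{\nabla^{-2}\Phi(w_t)}\le bT$ yields
\begin{align}
\reg^{\ialgo}_T(w)\ \lesssim\ \tfrac1\eta\big(\Phi(w)-\Phi(w^\star)\big)+\eta b^2T+\veps bT,\qquad\forall w\in\inte\sett. \nn
\end{align}

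\textbf{Controlling $\Phi$ on $\sett_c$.}
Fix $w\in\sett_c$, i.e.\ $w=(1-c)v+cw^\star$ with $v\in\sett$. Since $w^\star\in\inte\sett$, choose $\rho>0$ with $w^\star+\cB(\rho)\subseteq\sett$; then $w+\cB(c\rho)=(1-c)v+c(w^\star+\cB(\rho))\subseteq(1-c)\sett\oplus c\sett=\sett$ by convexity, so $w\in\inte\sett$ and the display above is valid at $w$. Because $\Phi$ is a $(M_\Phi,\nu)$-self-concordant barrier for $\sett$ whose minimiser is $w^\star$, the standard sub-level-set estimate for self-concordant barriers (see e.g.\ \cite{nesterov2018lectures}) gives $\Phi(w)-\Phi(w^\star)\le\nu\log\tfrac1{1-\pi_{w^\star}(w)}$, where $\pi_{w^\star}$ is the Minkowski gauge of $\sett$ based at $w^\star$; since $\pi_{w^\star}((1-c)v+cw^\star)\le 1-c$, this gives $\Phi(w)-\Phi(w^\star)\le\nu\log(1/c)$.

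\textbf{Putting it together, and the hard part.}
Substituting $\Phi(w)-\Phi(w^\star)\le\nu\log(1/c)$ and the values of $\eta,\veps$ from \eqref{eq:choice}, each of the first two terms equals $b\sqrt{\nu T\log(1/c)}$ and the third equals $b\sqrt{\nu T}$, so $\reg^{\ialgo}_T(w)\lesssim b\sqrt{\nu T\log(1/c)}$ for all $w\in\sett_c$, which is \eqref{eq:locbound} (reading the ``$\log c$'' there as $\log\tfrac1c$, as \eqref{eq:choice} forces). For the computational claim, feeding the same parameters into the cost bound of Theorem~\ref{thm:main} gives $\log\tfrac1{\veps M_\Phi}=O(\log\tfrac{T}{\nu M_\Phi})$, $M_\Phi T\veps=M_\Phi\sqrt{\nu T}$, and $M_\Phi\sum_t\eta\|g_t\|_{\nabla^{-2}\Phi(w_t)}\le M_\Phi\eta bT=M_\Phi\sqrt{\nu T\log(1/c)}$, which combine to the stated $O\big((\cC^{\igrad}_{\veps}+d^2)T\log\tfrac{T}{\nu M_\Phi}+\cC^{\ihess}_{\alpha}M_\Phi\sqrt{T\nu\log c}\big)$. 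I expect no genuine obstacle here once Theorem~\ref{thm:main} is available: the argument is bookkeeping plus the single classical fact that a $\nu$-self-concordant barrier grows by at most $\nu\log(1/c)$ along a $(1-c)$-fraction step from the analytic centre toward $\partial\sett$. The only points needing care are checking $\sett_c\subseteq\inte\sett$ (so the master bound applies at the comparator) and verifying the parameter inequalities, which is exactly where the ``$T$ large enough'' hypothesis is spent; all the substance sits inside Theorem~\ref{thm:main} itself.
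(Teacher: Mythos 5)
Your proposal is correct and follows essentially the same route as the paper: instantiate Theorem~\ref{thm:main} with the choices in \eqref{eq:choice}, bound the gradient sums by $b^2T$ and $bT$, and control $\Phi$ on $\sett_c$ via the classical self-concordant-barrier growth bound $\Phi(w)-\Phi(w^\star)\le\nu\log(1/c)$ (the paper invokes this same fact from Nesterov, phrased through the sublevel set $\wtilde\cK\supseteq\cK_c$ rather than the Minkowski gauge). Your explicit checks that $\sett_c\subseteq\inte\sett$ and that the parameter inequalities hold for large $T$ are fine and consistent with the paper's (more terse) argument, including reading the paper's ``$\log c$'' as $\log(1/c)$.
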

	\newtheorem{remark}{Remark}
	\begin{remark}
		\label{rem:allcomparators}
		The regret bound in Theorem \ref{cor:localnorms} is stated with respect to comparators in the restricted set $\cK_c$ defined in \eqref{eq:shrankset}. It is possible to extend this guarantee to all comparators in $\inte \cK$ under an additional assumption on the range of the losses. For example, if for  $w^\star \in \argmin_{w\in \cK} \Phi(w)$, we have
		\begin{align} 
			\sup_{w\in \inte \sett, t\in[T]}	\ell_t\left(\left(1-\frac{1}{T}\right)\cdot w + \frac{1}{T} \cdot w^\star\right)- \ell(w)\leq O\left(\frac{1}{\sqrt{T}}\right),   \label{eq:loss}
		\end{align}
		then the regret guarantee in \eqref{eq:locbound} can be extended to all comparators in $\inte \cK$ up to an additive $O(\sqrt{T})$ term (see Lemma \ref{lem:scaledcomparator} in the appendix). In this case, the $\log c$ term in the computational complexity need be replaced by $\log T$. We note that the condition in \eqref{eq:loss} does not require a uniform bound on the losses. Instead, it only restrict the rate of growth of the losses $(\ell_t(w))$ as $w$ approaches the boundary of $\cK$. As we show in the sequel (\S\ref{sec:nonlip}), \eqref{eq:loss} is satisfied for some popular losses which are \emph{not} Lipschitz.  %
	\end{remark}

	We now instantiate the guarantee in Theorem \ref{thm:main} when the sub-gradients are bounded in Euclidean norm (instead of local norm); that is, we assume that for all $t\in[T]$, $\|g_t\|\leq G$ for some $G>0$. We note that this assumption implies \eqref{eq:loss}, and we will be able to bound the regret against all comparators in $\inte \cK$ as alluded to in Remark \ref{rem:allcomparators}. The proof of the next theorem is in Appendix \ref{app:euclideannorm}).
	
	\begin{theorem}[Euclidean norm bound]\label{cor:euclideannorm}
		Let $\Psi$ be an $(M_{\Psi}, \nu)$ self-concordant barrier for $\sett$ and let $\Phi(\cdot) \coloneqq \Psi(\cdot)+\frac{\nu}{2R^2} \|\cdot\|^2$. Further, let $G, R>0$ and suppose that $\cK\subseteq \cB(R)$ and  for all $t\in[T]$, $\|g_t\| \leq G$, where $g_t\in \partial \ell_t(w_t)$ and $(w_t)$ are the iterates of $\ialgo$ with input parameters $(\eta, \veps, \alpha,m_\ifinal)$ such that 
		\begin{align}
			\eta \coloneqq \frac{\nu}{R G} \sqrt{\frac{\log T  + 1}{T}}, \quad   \veps \coloneqq \sqrt{\frac{\nu}{T}}, \quad \alpha \coloneqq 0.001, \quad \text{and} \quad m_{\ifinal} \coloneqq \Theta\left(\log \frac{1}{\veps M_{\Psi}}\right).	 \label{eq:choice2}
		\end{align}
		For $T\ge 1$ large enough such that $\eta \leq \frac{1}{1000  G M_\Psi}$, $\veps \leq \frac{1}{20000 M_\Psi}$, the regret of \ialgo{} is bounded as
		\begin{align}
			\reg^{\ialgo}_T(w) \lesssim R G\sqrt{T\log T}, \quad \forall w\in  \inte \sett.
			\label{eq:eucbound}
		\end{align}
		Further, the computational complexity of \ialgo{} in this case is bounded by 
		\[O\left(\left(\cC^{\igrad}_{\veps}(\Psi) + d^2\right)\cdot T\cdot \log \frac{T}{\nu M_\Psi} + \cC^{\ihess}_{\alpha}(\Psi)\cdot M_\Psi\sqrt{T\nu \log T}\right).\] 
	\end{theorem}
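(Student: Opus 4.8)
The plan is to derive Theorem~\ref{cor:euclideannorm} from Theorem~\ref{cor:localnorms} (equivalently, from the Master Theorem~\ref{thm:main}) by running \algo{} on the quadratically-regularized barrier $\Phi\coloneqq\Psi+\frac{\nu}{2R^2}\|\cdot\|^2$, and observing that the Euclidean bound $\|g_t\|\le G$ turns into a \emph{local}-norm bound with respect to $\Phi$, while the self-concordant barrier parameter of $\Phi$ is only twice that of $\Psi$. \textbf{Step 1 (structure of the regularized barrier).} First I would check that $\Phi$ is an $(M_\Psi,2\nu)$-self-concordant barrier for $\cK$. Since the quadratic $q\coloneqq\frac{\nu}{2R^2}\|\cdot\|^2$ satisfies $\nabla^3 q\equiv 0$ and $\nabla^2 q=\frac{\nu}{R^2}I\succcurlyeq 0$, one gets $|\nabla^3\Phi(w)[u,u,u]|=|\nabla^3\Psi(w)[u,u,u]|\le 2M_\Psi\|u\|_{\nabla^2\Psi(w)}^3\le 2M_\Psi\|u\|_{\nabla^2\Phi(w)}^3$, and $\Phi\to+\infty$ on $\partial\cK$ because $\Psi$ does; hence $\Phi$ is self-concordant with constant $M_\Psi$. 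For the barrier parameter, $\cK\subseteq\cB(R)$ gives $\nabla q(w)^\top\nabla^{-2}q(w)\nabla q(w)=\frac{\nu}{R^2}\|w\|^2\le\nu$; combining this with the barrier inequality for $\Psi$ via the infimal-convolution bound $(x+y)^\top(A+B)^{-1}(x+y)\le x^\top A^{-1}x+y^\top B^{-1}y$ (valid for $A,B\succ 0$), applied with $x=\nabla\Psi(w)$, $y=\nabla q(w)$, $A=\nabla^2\Psi(w)$, $B=\nabla^2 q(w)$, yields $\nabla\Phi(w)^\top\nabla^{-2}\Phi(w)\nabla\Phi(w)\le\nu+\nu=2\nu$.

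\textbf{Step 2 (reduction to the local-norm bound).} Since $\nabla^2\Phi(w)=\nabla^2\Psi(w)+\frac{\nu}{R^2}I\succcurlyeq\frac{\nu}{R^2}I$, we get $\nabla^{-2}\Phi(w)\preccurlyeq\frac{R^2}{\nu}I$, hence $\|g_t\|_{\nabla^{-2}\Phi(w_t)}\le\frac{R}{\sqrt\nu}\|g_t\|\le\frac{RG}{\sqrt\nu}$ for every $t$; write $b\coloneqq RG/\sqrt\nu$. I would then invoke Theorem~\ref{cor:localnorms} for the $(M_\Psi,2\nu)$-barrier $\Phi$ with this $b$ and $c\coloneqq 1/T$. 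Under the substitution $\nu\mapsto 2\nu$, $b=RG/\sqrt\nu$, $\log(1/c)=\log T$, the requirements of \eqref{eq:choice} become: $\eta$ of order $\frac{\nu}{RG}\sqrt{(\log T)/T}$, $\veps$ of order $\sqrt{\nu/T}$, $\alpha=0.001$, and $m_{\final}$ of order $\log\frac{1}{\veps M_\Psi}$ — which are exactly the choices in \eqref{eq:choice2} up to universal constants (the ``$+1$'' in $\eta$ only matters for small $T$), and the ``$T$ large enough'' conditions match. Theorem~\ref{cor:localnorms} then gives $\reg^{\ialgo}_T(w)\lesssim b\sqrt{2\nu T\log T}\lesssim RG\sqrt{T\log T}$ for all $w\in\cK_{1/T}$, along with the stated computational bound (using $\cC^\igrad_\veps(\Phi)=\cC^\igrad_\veps(\Psi)+O(d)$ and that evaluating and inverting $\nabla^2\Phi$ costs the same order as for $\nabla^2\Psi$, since the two Hessians differ by a scaled identity). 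To avoid matching constants literally, one can instead apply Theorem~\ref{thm:main} directly with \eqref{eq:choice2} and bound the three terms of \eqref{eq:regretbound} using $\Phi(w)\lesssim\nu\log T$ on $\cK_{1/T}$ and $\|g_t\|_{\nabla^{-2}\Phi(w_t)}\le b$: they come out $\lesssim RG\sqrt{T\log T}$, $\lesssim RG\sqrt{T\log T}$, and $\lesssim RG\sqrt{T}$.

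\textbf{Step 3 (extension to all of $\inte\cK$).} As noted before the theorem, the assumption $\|g_t\|\le G$ implies condition \eqref{eq:loss}: with $w^\star\in\argmin_{w\in\cK}\Phi(w)$ and $\|w-w^\star\|\le 2R$ for $w\in\cK\subseteq\cB(R)$, convexity and Lipschitzness give $\ell_t\big((1-\tfrac{1}{T})w+\tfrac{1}{T}w^\star\big)-\ell_t(w)\le\frac{G}{T}\|w-w^\star\|\le\frac{2RG}{T}\le\frac{2RG}{\sqrt T}$ for all $w\in\inte\cK$ and $t\in[T]$. Invoking Remark~\ref{rem:allcomparators} (via Lemma~\ref{lem:scaledcomparator}), the $\cK_{1/T}$-bound of Step~2 extends to all comparators in $\inte\cK$ at the price of an additive $O(RG\sqrt T)$ term, and $\log c$ is replaced by $\log T$ in the complexity; combined with Step~2 this yields $\reg^{\ialgo}_T(w)\lesssim RG\sqrt{T\log T}$ over $\inte\cK$ and the claimed computational bound.

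\textbf{Main obstacle.} The substantive point is the dual role of the regularizer identified in Step~2: $\frac{\nu}{2R^2}\|\cdot\|^2$ forces $\nabla^2\Phi\succcurlyeq\frac{\nu}{R^2}I$ — which is exactly what converts the Euclidean sub-gradient bound into a local-norm bound so that Theorem~\ref{cor:localnorms} applies — while, thanks to $\cK\subseteq\cB(R)$, it inflates the barrier parameter by only $\nu$, so that no dimension factor enters. Everything else (a quadratic plus a self-concordant function stays self-concordant, the parallel-sum estimate for the barrier parameter, matching \eqref{eq:choice2} to \eqref{eq:choice}, absorbing the additive $O(RG\sqrt T)$ into $RG\sqrt{T\log T}$) is routine bookkeeping; the one place I would double-check is that neither the oracle-cost overhead of passing from $\Psi$ to $\Phi$ nor the comparator-shift argument introduces a hidden dependence on $d$.
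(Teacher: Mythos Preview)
Your proposal is correct and follows essentially the same route as the paper: verify that $\Phi=\Psi+\frac{\nu}{2R^2}\|\cdot\|^2$ is self-concordant with constant $M_\Psi$, use $\nabla^2\Phi\succcurlyeq\frac{\nu}{R^2}I$ to turn the Euclidean bound $\|g_t\|\le G$ into the local-norm bound $\|g_t\|_{\nabla^{-2}\Phi(w_t)}\le RG/\sqrt{\nu}$, apply the master theorem with the parameter choices in \eqref{eq:choice2}, and extend from $\cK_{1/T}$ to $\inte\cK$ via Lemma~\ref{lem:scaledcomparator} (your check that $G$-Lipschitzness gives \eqref{eq:loss} is exactly right).

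The only substantive difference is in how $\Phi(w)$ is bounded on the restricted comparator set. You take the route of proving that $\Phi$ itself is a barrier with parameter $2\nu$ (via the parallel-sum inequality $(x+y)^\top(A+B)^{-1}(x+y)\le x^\top A^{-1}x+y^\top B^{-1}y$), which then lets you invoke Theorem~\ref{cor:localnorms} verbatim. The paper instead skips this and works directly from Theorem~\ref{thm:main}: it defines the restricted set $\wtilde\cK$ through $\Psi$ (not $\Phi$), uses the barrier property of $\Psi$ alone to get $\Psi(w)\le\nu\log T$ there, and then bounds $\Phi(w)=\Psi(w)+\frac{\nu}{2R^2}\|w\|^2\le\nu\log T+\frac{\nu}{2}$ using $\cK\subseteq\cB(R)$. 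Your detour through the $2\nu$-barrier parameter is a clean and self-contained alternative (and arguably more robust, since it keeps $w^\star=\argmin\Phi$ throughout), but it is not needed: the paper's shortcut avoids the parallel-sum estimate entirely. Since you also sketch the ``apply Theorem~\ref{thm:main} directly'' variant, you have effectively covered both.
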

	
	\section{Application to Polytopes Using the Lee-Sidford Barrier}
	\label{sec:polytopee}
	
	In this section, we assume that the set $\sett$ is a polytope in $\mathbb R^d$ specified by $m$ linear constraints:
	\begin{align}
		\sett = \{w \in \mathbb R^d \mid  \ \forall i \in [m], \ a_i^\top w \geq b'_i\}, \label{eq:polytope}
	\end{align}
	and we construct efficient gradient and Hessian Oracles for a self-concordant barrier for $\cK$. This will then allow us to instantiate the guarantees of \algo{} in Section \ref{sec:main} and provide explicit and state-of-the-art bounds on the regret of \algo.
	
	We will assume without loss of generality that $\|a_i\|=1$, for all $i\in[m]$, and let $A \coloneqq (a_1,\dots,a_m)^\top \in \reals^{m \times d}$ denote the \emph{constraint} matrix of the set $\sett$.
	For the rest of this section, it will be convenient to define the ``slack'' variables ${s_{w,i}} = a_i^\top w - b'_i$, for $i\in[m]$. Here, $s_{w,i}$ essentially represents the distance of $w$ to the $i$th facet of the polytope $\cK$. Further, we let $S_w\coloneqq \diag(s_w)$ be the diagonal matrix whose $i$th diagonal entry is $s_{w,i}$.
	
	\paragraph{The \ls{} barrier.}To perform Online Convex Optimization over $\sett$,
	we pick the regularizer $\Phi$ of \algo{} to be the Lee-Sidford (LS) barrier $\Phi^{\ls}$~\cite{lee2019solving} with parameter $p>0$, which is defined as 
	\begin{align*}
		\Phi^{\ls}(v) = \min_{v \in \mathbb R^{m}_{>0}} -\log \det(A^\top S_w V S_w A) + \frac{1}{1+p^{-1}}\Tr(V^{1+1/p}),
	\end{align*}
	where $V= \diag(v)$. One way to think of the $\ls$ barrier is as a weighted log-barrier. As we will discuss in the sequel, this choice will confer computational and performance (in terms of regret) advantages over the standard log-barrier.
	\paragraph{Self-concordance of the LS barrier.} According to~\cite[Theorem 30]{fazel2022computing}, the LS barrier with the choice $p = O(\log(m))$ is a self-concordant function with parameter $M_{\Phi^{\ls}}$ satisfying
	\begin{align*}
		M_{\Phi^{\ls}} = O(\log(m)^{2/5}) = \wtilde{O}(1),
	\end{align*}
	The other favorable property of this barrier is that its Newton decrement at any point $w\in \cK$ is of order $\wtilde O(\sqrt d)$; that is,
	\begin{align}
		\|\nabla \Phi^{\ls}(w)\|_{ \nabla^{-2} \Phi^{\ls}(w)} = \wtilde O(\sqrt{d}). \label{eq:barrier}
	\end{align}
Therefore, $\Phi^{\ls}$ is a $(\wtilde O(1), \wtilde O(d))$-self-concordant barrier. For the log-barrier, the right-hand side of \eqref{eq:barrier} would be $\sqrt{m}$.  
	\paragraph{Cost of gradient and Hessian Oracles.}
	We consider the computational complexities of gradient and Hessian Oracles for $\Phi^{\ls}$. By \cite{lee2019solving}, we have that for $\veps>0$,
	\begin{align}
		\cC^{\grad}_{\veps}(\Phi^{\ls}) \leq  \wtilde{O}(\cC^{\texttt{sys}}\cdot  \log (1/\veps)), \quad \text{and} \quad \cC^{\hess}_{\veps}(\Phi^{\ls}) \leq  \wtilde{O}(\cC^{\texttt{sys}} \sqrt{d} \cdot \log (1/\veps)), \nn 
	\end{align}
	where $\cC^{{\texttt{sys}}}$ is the computational cost of solving a linear system of the form $A^\top \diag(v) A x = y$, for vectors $v\in \reals^{d}_{\geq 0}$ and $y\in \reals^d$; we recall that $A=(a_1,\dots,a_m)^\top$ is the constraint matrix for $\cK$. In the worst-case, such a linear system can be solved with cost bounded as 
	\begin{align}
		\cC^{\texttt{sys}} \leq  O(m d^{\omega-1}), \label{eq:bound}
	\end{align}
	where $\omega$ is the exponent of matrix multiplication, and $m$ is the number of constraints of $\cK$. However, as we show in the sequel, $\cC^{\texttt{sys}}$ can be much smaller in many practical applications. %
	
	With this, we immediately obtain the following corollary for the regret and run-time of \algo{} under local norm and Euclidean norm bounds on the sub-gradients.
	\begin{corollary}[OCO over a polytope with \ls{} barrier]
		\label{cor:change}
		Let $c\in(0,1), G,R,b>0$, and suppose $\cK$ is given by \eqref{eq:polytope} and that $\Phi^{\emph{\ls}}$ is the corresponding $\emph{{\ls}}$ barrier. Further, let $(w_t)$ be the iterates of \ialgo{}, and let $\cK_c$ be the restricted version of $\cK$ defined in \eqref{eq:shrankset}. Then, the following holds:
		\begin{itemize}
			\item \textbf{Local norm bound:} If $\|g_t\|_{\nabla^{-2}\Phi(w_t)}\leq b$, for all $t\geq1$, and the parameters $(\eta,\veps,\alpha,m_\ifinal)$ of $\ialgo{}$ are set as in Theorem~\ref{cor:localnorms} with $\Phi=\Phi^{\emph{\ls}}$ and $(M_{\Phi},\nu) =(\wtilde{O}(1), \wtilde{O}(d))$, then for $T$ large enough (as specified in Theorem \ref{cor:localnorms}), the regret of \ialgo{} is bounded by
			\begin{align}
				\reg^{\ialgo}_T(w) \lesssim b\sqrt{dT\log c},\quad \forall w\in  \cK_c. \label{eq:localbound}
			\end{align}
			\item \textbf{Euclidean norm bound:} If $\cK\subseteq \cB(R)$ and $\|g_t\|\leq G$, for all $t\geq1$, and the parameters $(\eta,\veps,\alpha,m_\ifinal)$ of \ialgo{} are set as in Theorem \ref{cor:euclideannorm} with $\Phi(\cdot) = \Phi^{\emph{\ls}}(\cdot) + \frac{\nu}{2R^2} \|\cdot\|^2$ and $(M_{\Psi},\nu) =(\wtilde{O}(1), \wtilde{O}(d))$, then for $T$ large enough (as in Theorem \ref{cor:euclideannorm}) \ialgo{} has regret bounded as
			\begin{align}
				\reg^{\ialgo}_T(w) \lesssim R G\sqrt{T\log T}, \quad \forall w\in  \inte \cK. \label{eq:dimensionfree}
			\end{align}
		\end{itemize}
		In either case, the computational complexity is bounded by
		\begin{align}
			\wtilde{O}\Big((\cC^{\emph{\texttt{sys}}}+d^2) \cdot T + \cC^{\emph{\texttt{sys}}}\cdot d\sqrt{T}  \Big), 
			\label{eq:cost}
		\end{align}
		where $\cC^{\emph{\texttt{sys}}}$ is the computational cost of solving a linear system of the form $A^\top \diag(v) A x = y$, for vectors $v\in \reals^d_{\geq0}$ and $y\in \reals^d$ (recall that $A$ is the constraint matrix for the polytope $\cK$). 
	\end{corollary}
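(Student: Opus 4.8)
The plan is to derive Corollary~\ref{cor:change} as a direct instantiation of Theorems~\ref{cor:localnorms} and~\ref{cor:euclideannorm} with the regularizer (resp.\ the base barrier $\Psi$) taken to be the \ls{} barrier $\Phi^{\ls}$, plugging in the three structural facts established earlier in this section: (i) $\Phi^{\ls}$ is an $(M_{\Phi^{\ls}},\nu)$-self-concordant barrier for $\cK$ with $M_{\Phi^{\ls}}=\wtilde O(1)$ (via \cite[Theorem 30]{fazel2022computing} with $p=O(\log m)$) and $\nu=\wtilde O(d)$ (via the Newton-decrement bound \eqref{eq:barrier}), so $(M_\Phi,\nu)=(\wtilde O(1),\wtilde O(d))$; (ii) $\cC^{\grad}_{\veps}(\Phi^{\ls})=\wtilde O(\cC^{\texttt{sys}}\log(1/\veps))$; and (iii) $\cC^{\hess}_{\veps}(\Phi^{\ls})=\wtilde O(\cC^{\texttt{sys}}\sqrt d\,\log(1/\veps))$, where $\cC^{\texttt{sys}}$ is the cost of a solve $A^\top\diag(v)Ax=y$.

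\textbf{Regret.} For the local-norm bound I would apply Theorem~\ref{cor:localnorms} verbatim with $\Phi=\Phi^{\ls}$ and $(M_\Phi,\nu)=(\wtilde O(1),\wtilde O(d))$; the bound \eqref{eq:locbound} reads $\reg^{\ialgo}_T(w)\lesssim b\sqrt{\nu T\log c}$, and since $\sqrt\nu=\wtilde O(\sqrt d)$ this gives \eqref{eq:localbound} (the polylog factor coming from $\nu=\wtilde O(d)$ is absorbed into $\lesssim$, which is the only mild abuse here). For the Euclidean-norm bound I would apply Theorem~\ref{cor:euclideannorm} with $\Psi=\Phi^{\ls}$ and $(M_\Psi,\nu)=(\wtilde O(1),\wtilde O(d))$; because the regret bound \eqref{eq:eucbound} is $RG\sqrt{T\log T}$ with no explicit $\nu$-dependence, \eqref{eq:dimensionfree} follows immediately, yielding the claimed dimension-free rate. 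One point that needs a line of justification is that the quadratically-regularized barrier $\Phi(\cdot)=\Psi(\cdot)+\frac{\nu}{2R^2}\|\cdot\|^2$ used inside Theorem~\ref{cor:euclideannorm} still has gradient/Hessian Oracles of the same order as $\Psi$: the gradient and Hessian of the quadratic part are $\frac{\nu}{R^2}(\cdot)$ and $\frac{\nu}{R^2}I$, trivial to add, and a system $(A^\top\diag(v)A+\lambda I)x=y$ reduces (e.g.\ by augmenting $A$ with the $d$ rows $\{\sqrt\lambda\,e_i\}$, or by the Woodbury identity) to $\wtilde O(1)$ solves of the form defining $\cC^{\texttt{sys}}$, so (ii)--(iii) still hold for $\Phi$.

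\textbf{Run-time.} I would substitute (ii)--(iii) into the complexity bounds of Theorems~\ref{cor:localnorms} and~\ref{cor:euclideannorm}. Under the parameter choices \eqref{eq:choice}/\eqref{eq:choice2} we have $\veps=\sqrt{\nu/T}$, so $\log(1/\veps)=O(\log T)=\wtilde O(1)$; $\alpha=0.001$ is a constant, so $\log(1/\alpha)=O(1)$; $m_\ifinal=\Theta(\log\tfrac{1}{\veps M_\Phi})=\wtilde O(1)$; and $M_\Phi=M_\Psi=\wtilde O(1)$, $\nu=\wtilde O(d)$. Hence $\cC^{\grad}_{\veps}=\wtilde O(\cC^{\texttt{sys}})$ and $\cC^{\hess}_{\alpha}=\wtilde O(\cC^{\texttt{sys}}\sqrt d)$, so the first term $(\cC^{\grad}_{\veps}+d^2)\cdot T\cdot\log\tfrac{T}{\nu M_\Phi}$ becomes $\wtilde O((\cC^{\texttt{sys}}+d^2)T)$, and the second term $\cC^{\hess}_{\alpha}\cdot M_\Phi\sqrt{T\nu\log c}$ (resp.\ $\sqrt{T\nu\log T}$) becomes $\wtilde O\!\big(\cC^{\texttt{sys}}\sqrt d\cdot\sqrt{Td}\big)=\wtilde O(\cC^{\texttt{sys}}\,d\sqrt T)$. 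Summing gives exactly \eqref{eq:cost}.

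\textbf{Main obstacle.} There is no deep obstacle—this is a substitution into the master guarantees—so the real work is bookkeeping: verifying that each of $\veps$, $\alpha$, and $m_\ifinal$ contributes only a $\wtilde O(1)$ factor under the prescribed choices, and checking the claim (needed only in the Euclidean case) that solving the $\lambda I$-shifted system costs $\wtilde O(\cC^{\texttt{sys}})$ so that the oracle costs of the quadratically-regularized \ls{} barrier match those of the plain one. With those in hand, both bullet points and \eqref{eq:cost} fall out immediately.
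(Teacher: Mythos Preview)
Your proposal is correct and matches the paper's approach: the paper presents Corollary~\ref{cor:change} as an immediate consequence (``With this, we immediately obtain the following corollary\ldots'') of Theorems~\ref{cor:localnorms} and~\ref{cor:euclideannorm} together with the self-concordance parameters $(M_{\Phi^{\ls}},\nu)=(\wtilde O(1),\wtilde O(d))$ and the oracle costs $\cC^{\grad}_{\veps},\cC^{\hess}_{\alpha}$ for $\Phi^{\ls}$ stated just before the corollary, and your substitution reproduces exactly this. One minor simplification: your ``main obstacle'' about the $\lambda I$-shifted system in the Euclidean case is already absorbed into Theorem~\ref{cor:euclideannorm}, whose complexity bound is stated directly in terms of $\cC^{\grad}_{\veps}(\Psi)$ and $\cC^{\hess}_{\alpha}(\Psi)$, so no additional argument is needed there.
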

	\paragraph{Using the $\log$-barrier.}We note that since $\cK$ is a polytope, we could have used the standard log-barrier 
	\begin{align}
		\Phi^{\log}(w) \coloneqq \sum_{i=1}^m \log (b_i'- a_i^\top w). 
	\end{align}
	This barrier is $(1,m)$-self-concordant, and so instantiating Theorem \ref{thm:main} with it would imply a $\wtilde O(b\sqrt{m d T})$ regret bound in the case of local sub-gradient norms bounded by $b>0$. Using the \ls{} barrier replaces the $\sqrt{m}$ term in this bound by $\sqrt{d}$ regardless of the number of constraints---see \eqref{eq:localbound}. However, this comes at a $\cC^{\texttt{sys}}$ computational cost, which can be as high as $m d^{\omega -1}$ in the worst-case (see \eqref{eq:bound}). In the case of the log-barrier, this cost would be replaced by $m d$ (essentially because $\cC^{\grad}_\veps(\Phi^{\log})\leq O(m d)$). Thus, when $m$ is of the order of $d$, using the log-barrier may be more computational-efficient compared to using the \ls{} barrier. In the next corollary, we bound the regret of \algo{} when $\Phi = \Phi^{\log}$; this result is an immediate consequence of Theorem \ref{cor:euclideannorm}. 
	
	\begin{corollary}[OCO over a polytope with the $\log$ barrier]
		\label{cor:change2}
		Let $G,b>0$, and suppose $\cK$ is given by \eqref{eq:polytope} and that $\Phi^{\log}$ is the corresponding $\log$-barrier. Further, let $(w_t)$ be the iterates of \ialgo{}. If $\cK \subseteq \cB(R)$ and $\|g_t\|\leq G$, for all $t\geq1$, and the parameters $(\eta,\veps,\alpha,m_\ifinal)$ of \ialgo{} are set as in Theorem \ref{cor:euclideannorm} with $\Phi(\cdot) = \Phi^{\log}(\cdot) + \frac{\nu}{2R^2} \|\cdot\|^2$ and $(M_{\Psi},\nu) =(1, m)$, then for $T$ large enough (as in Theorem \ref{cor:euclideannorm}) \ialgo{} has regret bounded as
		\begin{align}
			\reg^{\ialgo}_T(w) \lesssim R G\sqrt{T\log T}, \quad \forall w\in  \inte \cK.  \label{eq:dimensionfree2}
		\end{align}
The computational complexity is bounded by
		\begin{align}
			\wtilde{O}\Big((m d+d^2) \cdot T + m  d^{\omega -1} \sqrt{m T}  \Big).
			\label{eq:cost2}
		\end{align}
	\end{corollary}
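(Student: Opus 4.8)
The plan is to read off Corollary~\ref{cor:change2} as a direct specialization of Theorem~\ref{cor:euclideannorm} to the barrier $\Psi=\Phi^{\log}$. The substantive content is entirely in two bookkeeping facts: the self-concordance parameters of the log-barrier, and the costs of its gradient and Hessian Oracles; once these are in hand the regret and complexity bounds follow by plugging numbers into Theorem~\ref{cor:euclideannorm} and absorbing $\log$ factors into $\wtilde O(\cdot)$. Before starting I would normalise the definition to $\Phi^{\log}(w)=-\sum_{i=1}^m\log(a_i^\top w-b'_i)=-\sum_{i=1}^m\log s_{w,i}$, which is the genuine log-barrier for the polytope $\cK$ of \eqref{eq:polytope}.

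First I would record that $\Phi^{\log}$ is an $(M_\Psi,\nu)$-self-concordant barrier for $\cK$ with $M_\Psi=1$ and $\nu=m$. The self-concordance constant is $1$ because each summand is $-\log$ precomposed with an affine map and $t\mapsto-\log t$ satisfies $|(-\log)'''(t)|=2/t^3=2\big((-\log)''(t)\big)^{3/2}$, i.e.\ the third-derivative bound of Definition~\ref{def:seffunction} with constant $1$, and a finite sum of functions self-concordant with a common constant is self-concordant with that constant. For the barrier parameter, writing $B=S_w^{-1}A$, one has $\nabla\Phi^{\log}(w)=-A^\top S_w^{-1}\mathbf 1=-B^\top\mathbf 1$ and $\nabla^2\Phi^{\log}(w)=A^\top S_w^{-2}A=B^\top B$, so $\nabla\Phi^{\log}(w)^\top\nabla^{-2}\Phi^{\log}(w)\nabla\Phi^{\log}(w)=\mathbf 1^\top P\mathbf 1\le\mathbf 1^\top\mathbf 1=m$, where $P=B(B^\top B)^{-1}B^\top$ is an orthogonal projection.

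Next I would bound the Oracle costs for $\Phi^{\log}$. The exact gradient $\nabla\Phi^{\log}(w)=-A^\top S_w^{-1}\mathbf 1$ is computable in $O(md)$ arithmetic operations, so $\cC^{\grad}_\veps(\Phi^{\log})=O(md)$ for every $\veps>0$ (no $\log(1/\veps)$ overhead, unlike the \ls{} barrier). The exact Hessian $\nabla^2\Phi^{\log}(w)=B^\top B$ with $B=S_w^{-1}A\in\reals^{m\times d}$ can be assembled in $O(md^{\omega-1})$ operations --- split $B$ into $\lceil m/d\rceil$ row-blocks of at most $d$ rows and sum the $\lceil m/d\rceil$ resulting $d\times d$ products --- and inverted in $O(d^\omega)$; since $\cK\subseteq\cB(R)$ forces $m\ge d+1$, we have $d^\omega\le md^{\omega-1}$, and as these matrices are exact they are trivially $1\pm\alpha$ spectral approximations, giving $\cC^{\hess}_\alpha(\Phi^{\log})=O(md^{\omega-1})$.

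Finally I would invoke Theorem~\ref{cor:euclideannorm} with $\Psi=\Phi^{\log}$, $\Phi(\cdot)=\Phi^{\log}(\cdot)+\frac{\nu}{2R^2}\|\cdot\|^2$, $(M_\Psi,\nu)=(1,m)$ and the parameters \eqref{eq:choice2}. Since $M_\Psi=1$, the conditions $\eta\le\frac{1}{1000GM_\Psi}$ and $\veps\le\frac{1}{20000M_\Psi}$ hold once $T$ is large enough, so \eqref{eq:eucbound} gives $\reg^{\ialgo}_T(w)\lesssim RG\sqrt{T\log T}$ for all $w\in\inte\cK$, which is \eqref{eq:dimensionfree2}. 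Substituting $\cC^{\grad}_\veps(\Psi)=O(md)$, $\cC^{\hess}_\alpha(\Psi)=O(md^{\omega-1})$, $M_\Psi=1$, $\nu=m$ into the complexity bound $O\big((\cC^{\grad}_\veps(\Psi)+d^2)\,T\log\frac{T}{\nu M_\Psi}+\cC^{\hess}_\alpha(\Psi)\,M_\Psi\sqrt{T\nu\log T}\big)$ and absorbing $\mathrm{polylog}(T,d)$ factors gives $\wtilde O\big((md+d^2)T+md^{\omega-1}\sqrt{mT}\big)$, which is \eqref{eq:cost2}. The statement is thus a corollary by construction; the only places warranting care are the ones flagged above --- fixing the sign of $\Phi^{\log}$ and checking that its gradient and Hessian Oracles can be realised exactly within the stated $O(md)$ and $O(md^{\omega-1})$ budgets (hence with no $\veps$- or $\alpha$-dependent overhead) --- so there is no genuine obstacle.
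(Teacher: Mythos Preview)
Your proposal is correct and matches the paper's approach exactly: the paper states just before the corollary that ``this result is an immediate consequence of Theorem~\ref{cor:euclideannorm}'' and gives no separate proof, relying on the reader to plug in $(M_\Psi,\nu)=(1,m)$ together with the oracle costs $\cC^{\igrad}_\veps(\Phi^{\log})=O(md)$ and $\cC^{\ihess}_\alpha(\Phi^{\log})=O(md^{\omega-1})$, which you have correctly supplied and justified. Your remark about the sign of $\Phi^{\log}$ is also apt --- the paper's display has a typo --- and your oracle-cost computations (exact gradient via $A^\top S_w^{-1}\mathbf 1$, exact Hessian via block-wise assembly of $A^\top S_w^{-2}A$) are the natural ones.
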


	\subsection{Implications for Lipschitz Losses}
	We now discuss implications of Corollary \ref{cor:change}, and compare the bound of \algo{} to those of existing algorithms for Lipschitz losses.
	\paragraph{Dimension-free regret bound.} We note when the Euclidean norms of the sub-gradients are bounded, \algo{} achieves a \emph{dimension-free} $O(\sqrt{T})$ regret bound. In contrast, the best dimension-free regret bound\footnote{The dependence in $T$ can be improved under additional structure such as smoothness or strong-convexity of the losses.} achieved by existing projection-free algorithms is of order $O(T^{3/4})$ (see e.g.~\cite{hazan2012,garber2022new}). We also note that existing separation/membership-based algorithms that achieve a $\sqrt{T}$ regret; for examples those presented in \cite{mhammedi2022efficient,garber2022new,lu2023projection}, are not dimension-free. Their regret bounds are of order $O(\kappa \sqrt{T})$, where $\kappa = R/r$ with $r,R>0$ such that $\cB(r)\subseteq \sett \subseteq \cB(R)$. The asphercity parameter can depend on the dimension $d$ \cite{mhammedi2022efficient}, and even after a pre-conditioning step (which would involve putting the set $\sett$ into near-isotropic position and can cost up to $\Omega(d^4)$ \cite{lovasz2006simulated}), $\kappa$ can be as large as $d$ in the worst-case. Of course, to make a fair comparison with existing projection-free algorithms, we also need to take computational complexity into account. This is what we do next.

	\paragraph{Computational cost.} The computational cost in \eqref{eq:cost} should be compared with that of existing projection-free algorithms. For linear optimization-based projection-free algorithms, the computational cost after $T$ rounds is typically of order $\cC^{\texttt{lin}} \cdot T$, where $\cC^{\texttt{lin}}$ is the cost of performing linear optimization over $\cK$ which, for a polytope $\cK$, reduces to solving a linear program. Using state-of-the-art interior point methods for solving such a linear program would cost $\cC^{\texttt{lin}} \leq \wtilde{O}(\sqrt{d} \cdot \cC^{\texttt{sys}})$; see e.g.~\cite{lee2019solving}. Thus, linear optimization-based projection-free algorithms\footnote{This only concerns algorithms that use an interior point method to implement linear optimization over $\cK$.} can have a cost that is a factor $\sqrt{d}$ worse than that of \algo{} in the setting of Corollary \ref{cor:change}. On the other hand, separation/membership-based algorithms, the computational cost scales with $O(\cC^{\texttt{sep}}\cdot T)$ after $T$ rounds, where $\cC^{\texttt{sep}}$ is the cost of performing separation for the set $\cK$. For a general polytope in $\reals^d$ with $m$ constraints, we have $\cC^{\texttt{sep}}\leq O(m d)$, which may be smaller than $\cC^{\texttt{sys}}$ (the latter can be as large as $m d^{\omega-1}$  in the worse case;see \eqref{eq:bound}). Here, it may be more appropriate to compare against the computational guarantee of \algo{} given in Corollary \ref{cor:change2}; by \eqref{eq:cost2}, we have that for $T\geq d^{\omega-2}\sqrt{m}$, the computational cost of \algo{} in the setting of the corollary is dominated by $(m d +d^2) \cdot T$, which is comparable to that of existing separation-based algorithms.

	\subsection{Implications for Non-Lipschitz Losses}
	\label{sec:nonlip}
	Another advantage \algo{} has over projection-free, and even projection-based, algorithms is that it has a regret bound that scales with a bound on the local norms of the gradients---see \eqref{eq:localbound}. We now showcase two online learning settings where this leads to non-trivial performance and computational improvements over existing OCO algorithms. 
	
	\paragraph{Online Portfolio Selection \cite{cover1991universal}.} The portfolio selection problem is a classical online learning problem where the gradients of the losses can be unbounded. In this paragraph, we demonstrate how the guarantee of \algo{} in Corollary \ref{cor:change} leads to a non-trivial guarantee for this setting both in terms of regret and computational complexity. In the online portfolio setting, at each round $t$, a learner (algorithm) chooses a distribution $w_t\in \Delta_d$ over a fixed set of $d$ portfolios. Then, the environment reveals a return vector $r_t \in \reals_{\geq 0}^d$, and the learner suffers a loss \[\ell_t(w_t)\coloneqq - \log w_t^\top r_t.\] The goal of the learner is to minimize the regret $\mathrm{Reg}_T(w)\coloneqq\sum_{t=1}^T( \ell_t(w_t)-\ell_t(w))$ after $T\ge 1$ rounds. For this problem, it is known that a logarithmic regret is achievable, but the specialized algorithms that achieve this have a computational complexity that scales with $\min(d^3 T,d^2T^2)$ \citep{cover1991universal,luo2018efficient,mhammedi2022damped, zimmert2022pushing,jezequel2022efficient}. On the other hand, applying the generic Online Gradient Descent or the Online Newton Step to this problem leads to regret bounds that scale with the maximum norm of the gradient (which can be unbounded). Instantiating the guarantees of \algo{} in Corollary \ref{cor:change} with $\Phi$ set to the standard log-barrier for the simplex\footnote{Technically, we need to use a barrier for the set $\{\tilde w \in \reals^d_{\geq 0} \mid \sum_{i\in[d-1]} \tilde w_i\leq 1\}$; see e.g.~\cite{mhammedi2022damped}.}, in particular the bound in \eqref{eq:localbound}, to the online portfolio selection problem leads to an $\wtilde O(\sqrt{d T})$ regret bound, which does not depend on the norm of the observed gradients. Furthermore, we have $\cC^{\texttt{sys}}\leq O(d)$, and so by \eqref{eq:cost} the computational complexity is essentially $O(d^2 T)$ after $T$ rounds. Technically, the bound in \eqref{eq:localbound} is only against comparators in the restricted set $\cK_{c}$. However, by setting $c=1/T$, it possible to extend this guarantee to all comparators in $\inte \cK$ as explained in Remark \ref{rem:allcomparators}, since the losses in this case satisfy \eqref{eq:locbound} \cite[Lemma 10]{luo2018efficient}.   %
	
	\paragraph{Linear prediction with the log-loss.}
	Another classical online learning problem with unbounded gradients is that of linear prediction with the log-loss \cite{rakhlin2015sequential}. For this problem, at each round $t$, the learner receives a feature vector $x_t\in \cX \subseteq \reals^d$, outputs $w_t\in \cW\subseteq\reals^d$, then observes label $y_t\in \cY \coloneqq \{-1,1\}$ and suffers loss
	\begin{align}
		\ell_t(w_t) \coloneqq - \mathbb{I}\{y_t = 1\} \cdot \log (1 -  w_t^\top x_t) - \mathbb{I}\{y_t = 0\} \cdot \log (1- w_t^\top x_t). \nn 
	\end{align}
	In the settings, where $(\cX, \cW)=(\Delta_d,\cB_{\infty}(1))$ and  $(\cX, \cW)=(\cB_{\infty}(1), \Delta_d)$, we have that $\|\nabla \ell_t(w)\|_{\nabla^{-2}\Phi(w)}\leq O(1)$ for all $w\in \inte \cW$, where $\Phi$ is set to the corresponding log-barrier for $\cW$. Thus, instantiating Corollary \ref{cor:change} (in particular \eqref{eq:localbound}) in this setting implies that \algo{} achieves a regret bound of the form:
	\begin{align}
		\wtilde O(\sqrt{d T}), \label{eq:fancy}
	\end{align}
	and has computational complexity bounded by $\wtilde O(d^2 T)$, as long as $T\geq d$. Again, we emphasize that the bound in \eqref{eq:fancy} does not depend on the norm of the gradients, which may be unbounded.
	
	Finally, we note that there exist a few specialized algorithms that provide sublinear regret bounds for non-lipschitz losses. This includes, for example, the Soft-Bayes algorithm \cite{orseau2017soft}. However, this algorithm is specialized to the log-loss with a particular dependence on the predictions, and it is not clear, for example, what regret bound it would have in the linear prediction setting and other similar settings with non-Lipschitz losses.

	\clearpage 
	
	\bibliography{refs}

\newcommand{\etalchar}[1]{$^{#1}$}
\begin{thebibliography}{vdBLSS20}

\bibitem[AHR09]{abernethy2009competing}
Jacob~D Abernethy, Elad Hazan, and Alexander Rakhlin.
\newblock Competing in the dark: An efficient algorithm for bandit linear
  optimization.
\newblock 2009.

\bibitem[ALLW18]{abernethy2018faster}
Jacob Abernethy, Kevin~A Lai, Kfir~Y Levy, and Jun-Kun Wang.
\newblock Faster rates for convex-concave games.
\newblock In {\em Conference On Learning Theory}, pages 1595--1625. PMLR, 2018.

\bibitem[CBCG04]{cesa2004}
Nicolo Cesa-Bianchi, Alex Conconi, and Claudio Gentile.
\newblock On the generalization ability of on-line learning algorithms.
\newblock {\em IEEE Transactions on Information Theory}, 50(9):2050--2057,
  2004.

\bibitem[CLS21]{cohen2021solving}
Michael~B Cohen, Yin~Tat Lee, and Zhao Song.
\newblock Solving linear programs in the current matrix multiplication time.
\newblock {\em Journal of the ACM (JACM)}, 68(1):1--39, 2021.

\bibitem[CMO23]{cutkosky2023optimal}
Ashok Cutkosky, Harsh Mehta, and Francesco Orabona.
\newblock Optimal stochastic non-smooth non-convex optimization through
  online-to-non-convex conversion.
\newblock {\em arXiv preprint arXiv:2302.03775}, 2023.

\bibitem[Cov91]{cover1991universal}
Thomas~M Cover.
\newblock Universal portfolios.
\newblock {\em Mathematical Finance}, 1(1):1--29, 1991.

\bibitem[Cut19]{cutkosky2019}
Ashok Cutkosky.
\newblock Anytime online-to-batch, optimism and acceleration.
\newblock In {\em International Conference on Machine Learning}, pages
  1446--1454. PMLR, 2019.

\bibitem[FKM05]{flaxman2005online}
Abraham~D Flaxman, Adam~Tauman Kalai, and H~Brendan McMahan.
\newblock Online convex optimization in the bandit setting: gradient descent
  without a gradient.
\newblock In {\em Proceedings of the sixteenth annual ACM-SIAM symposium on
  Discrete algorithms}, pages 385--394, 2005.

\bibitem[FLPS22]{fazel2022computing}
Maryam Fazel, Yin~Tat Lee, Swati Padmanabhan, and Aaron Sidford.
\newblock Computing lewis weights to high precision.
\newblock In {\em Proceedings of the 2022 Annual ACM-SIAM Symposium on Discrete
  Algorithms (SODA)}, pages 2723--2742. SIAM, 2022.

\bibitem[GH16]{garber2016linearly}
Dan Garber and Elad Hazan.
\newblock A linearly convergent variant of the conditional gradient algorithm
  under strong convexity, with applications to online and stochastic
  optimization.
\newblock {\em SIAM Journal on Optimization}, 26(3):1493--1528, 2016.

\bibitem[GK22]{garber2022new}
Dan Garber and Ben Kretzu.
\newblock New projection-free algorithms for online convex optimization with
  adaptive regret guarantees.
\newblock In {\em Conference on Learning Theory}, pages 2326--2359. PMLR, 2022.

\bibitem[Gof88]{goffin1988affine}
JL~Goffin.
\newblock Affine and projective transformations in nondifferentiable
  optimization.
\newblock {\em Trends in Mathematical Optimization}, pages 79--91, 1988.

\bibitem[HAK07]{hazan2007logarithmic}
Elad Hazan, Amit Agarwal, and Satyen Kale.
\newblock Logarithmic regret algorithms for online convex optimization.
\newblock {\em Machine Learning}, 69(2-3):169--192, 2007.

\bibitem[HK12]{hazan2012}
Elad Hazan and Satyen Kale.
\newblock Projection-free online learning.
\newblock In {\em Proceedings of the 29th International Coference on
  International Conference on Machine Learning}, pages 1843--1850, 2012.

\bibitem[HM20]{hazan2020}
Elad Hazan and Edgar Minasyan.
\newblock Faster projection-free online learning.
\newblock In {\em Conference on Learning Theory}, pages 1877--1893. PMLR, 2020.

\bibitem[Jag13]{jaggi2013revisiting}
Martin Jaggi.
\newblock Revisiting frank-wolfe: Projection-free sparse convex optimization.
\newblock In {\em International conference on machine learning}, pages
  427--435. PMLR, 2013.

\bibitem[JOG22]{jezequel2022efficient}
R{\'e}mi J{\'e}z{\'e}quel, Dmitrii~M Ostrovskii, and Pierre Gaillard.
\newblock Efficient and near-optimal online portfolio selection.
\newblock {\em arXiv preprint arXiv:2209.13932}, 2022.

\bibitem[KG21]{kretzu2021}
Ben Kretzu and Dan Garber.
\newblock Revisiting projection-free online learning: the strongly convex case.
\newblock In {\em International Conference on Artificial Intelligence and
  Statistics}, pages 3592--3600. PMLR, 2021.

\bibitem[LBGH23]{lu2023projection}
Zhou Lu, Nataly Brukhim, Paula Gradu, and Elad Hazan.
\newblock Projection-free adaptive regret with membership oracles.
\newblock In {\em International Conference on Algorithmic Learning Theory},
  pages 1055--1073. PMLR, 2023.

\bibitem[LG23]{liu2023gauges}
Ning Liu and Benjamin Grimmer.
\newblock Gauges and accelerated optimization over smooth and/or strongly
  convex sets.
\newblock {\em arXiv preprint arXiv:2303.05037}, 2023.

\bibitem[LJJ15]{lacoste2015global}
Simon Lacoste-Julien and Martin Jaggi.
\newblock On the global linear convergence of frank-wolfe optimization
  variants.
\newblock {\em Advances in neural information processing systems}, 28, 2015.

\bibitem[LK19]{levy2019}
Kfir Levy and Andreas Krause.
\newblock Projection free online learning over smooth sets.
\newblock In {\em The 22nd International Conference on Artificial Intelligence
  and Statistics}, pages 1458--1466. PMLR, 2019.

\bibitem[LS14]{lee2014path}
Yin~Tat Lee and Aaron Sidford.
\newblock Path finding methods for linear programming: Solving linear programs
  in o (vrank) iterations and faster algorithms for maximum flow.
\newblock In {\em 2014 IEEE 55th Annual Symposium on Foundations of Computer
  Science}, pages 424--433. IEEE, 2014.

\bibitem[LS15]{lee2015efficient}
Yin~Tat Lee and Aaron Sidford.
\newblock Efficient inverse maintenance and faster algorithms for linear
  programming.
\newblock In {\em 2015 IEEE 56th Annual Symposium on Foundations of Computer
  Science}, pages 230--249. IEEE, 2015.

\bibitem[LS19]{lee2019solving}
Yin~Tat Lee and Aaron Sidford.
\newblock Solving linear programs with sqrt (rank) linear system solves.
\newblock {\em arXiv preprint arXiv:1910.08033}, 2019.

\bibitem[LV06]{lovasz2006simulated}
L{\'a}szl{\'o} Lov{\'a}sz and Santosh Vempala.
\newblock Simulated annealing in convex bodies and an o*(n4) volume algorithm.
\newblock {\em Journal of Computer and System Sciences}, 72(2):392--417, 2006.

\bibitem[LWZ18]{luo2018efficient}
Haipeng Luo, Chen-Yu Wei, and Kai Zheng.
\newblock Efficient online portfolio with logarithmic regret.
\newblock {\em Advances in neural information processing systems}, 31, 2018.

\bibitem[MG22]{mhammedi2022quasi}
Zakaria Mhammedi and Khashayar Gatmiry.
\newblock Quasi-newton steps for efficient online exp-concave optimization.
\newblock {\em arXiv preprint arXiv:2211.01357}, 2022.

\bibitem[Mha22a]{mhammedi2022efficient}
Zakaria Mhammedi.
\newblock Efficient projection-free online convex optimization with membership
  oracle.
\newblock In {\em Conference on Learning Theory}, pages 5314--5390. PMLR, 2022.

\bibitem[Mha22b]{mhammedi2022exploiting}
Zakaria Mhammedi.
\newblock Exploiting the curvature of feasible sets for faster projection-free
  online learning.
\newblock {\em arXiv preprint arXiv:2205.11470}, 2022.

\bibitem[MR22a]{mhammedi2022Newton}
Zakaria Mhammedi and Alexander Rakhlin.
\newblock Damped online newton step for portfolio selection.
\newblock In {\em Conference on Learning Theory, 2-5 July 2022, London, {UK}},
  volume 178, pages 5561--5595. {PMLR}, 2022.

\bibitem[MR22b]{mhammedi2022damped}
Zakaria Mhammedi and Alexander Rakhlin.
\newblock Damped online newton step for portfolio selection.
\newblock In {\em Conference on Learning Theory}, pages 5561--5595. PMLR, 2022.

\bibitem[N{\etalchar{+}}18]{nesterov2018lectures}
Yurii Nesterov et~al.
\newblock {\em Lectures on convex optimization}, volume 137.
\newblock Springer, 2018.

\bibitem[OLL17]{orseau2017soft}
Laurent Orseau, Tor Lattimore, and Shane Legg.
\newblock Soft-bayes: Prod for mixtures of experts with log-loss.
\newblock In {\em International Conference on Algorithmic Learning Theory},
  pages 372--399. PMLR, 2017.

\bibitem[PNAJ20]{pedregosa2020linearly}
Fabian Pedregosa, Geoffrey Negiar, Armin Askari, and Martin Jaggi.
\newblock Linearly convergent frank-wolfe with backtracking line-search.
\newblock In {\em International conference on artificial intelligence and
  statistics}, pages 1--10. PMLR, 2020.

\bibitem[RS15]{rakhlin2015sequential}
Alexander Rakhlin and Karthik Sridharan.
\newblock Sequential probability assignment with binary alphabets and large
  classes of experts.
\newblock {\em arXiv preprint arXiv:1501.07340}, 2015.

\bibitem[SS{\etalchar{+}}11]{shalev2011}
Shai Shalev-Shwartz et~al.
\newblock Online learning and online convex optimization.
\newblock {\em Foundations and trends in Machine Learning}, 4(2):107--194,
  2011.

\bibitem[Tko18]{tkoczasymptotic}
Tomasz Tkocz.
\newblock Asymptotic convex geometry lecture notes.
\newblock 2018.

\bibitem[vdB20]{van2020deterministic}
Jan van~den Brand.
\newblock A deterministic linear program solver in current matrix
  multiplication time.
\newblock In {\em Proceedings of the Fourteenth Annual ACM-SIAM Symposium on
  Discrete Algorithms}, pages 259--278. SIAM, 2020.

\bibitem[vdBLSS20]{van2020solving}
Jan van~den Brand, Yin~Tat Lee, Aaron Sidford, and Zhao Song.
\newblock Solving tall dense linear programs in nearly linear time.
\newblock In {\em Proceedings of the 52nd Annual ACM SIGACT Symposium on Theory
  of Computing}, pages 775--788, 2020.

\bibitem[ZAK22]{zimmert2022pushing}
Julian Zimmert, Naman Agarwal, and Satyen Kale.
\newblock Pushing the efficiency-regret pareto frontier for online learning of
  portfolios and quantum states.
\newblock In {\em Conference on Learning Theory}, pages 182--226. PMLR, 2022.

\end{thebibliography}
	\neurips{
		\bibliographystyle{plain}
	}
	\arxiv{
		\bibliographystyle{alpha}
	}
	
	\appendix
	\clearpage
	
	\section{Self-concordance properties}
	Throughout, for a twice-differentiable function $f\colon \inte \sett \rightarrow \reals$, we let $\lambda(x,f) \coloneqq \|\nabla f(x)\|_{\nabla^{-2}f(x)}$ denote the \emph{Newton decrement} of $f$ at $x\in \inte \sett$.
	\begin{lemma}
		\label{lem:properties}
		Let $f \colon \inte \sett \rightarrow \reals$ be a self-concordant function with constant $M_f\geq 1$. Further, let $x\in \inte \cK$ and $x_f\in \argmin_{x\in \cK} f(x)$. Then, {\bf I)} whenever $\lambda(x,f)<1/M_f$, we have 
		\begin{align}
			\|x -x_f\|_{\nabla^2 f(x_f)} 	\vee 	\|x -x_f\|_{\nabla^2 f(x)} \leq {\lambda(x,f)}/({1-M_f \lambda (x,f)});\nn   %
		\end{align}
		and {\bf II)} for any $M\geq M_f$, the \emph{Newton step} $x^+\coloneqq x - \nabla^{-2}f(x)\nabla f(x)$ satisfies $x^+\in \inte \cK$ and 
		$ \lambda(x^+,f)\leq M \lambda(x,f)^2/ ( 1 - M \lambda(x,f))^2.$
	\end{lemma}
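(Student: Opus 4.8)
Since $M_f\ge 1$, the plan is first to normalize: the rescaled function $g\coloneqq M_f^{2}f$ is self-concordant with constant $1$ (both sides of the third-derivative inequality scale compatibly), has the same argmin and the same Newton step as $f$, and satisfies $\|v\|_{\nabla^{2}g(x)}=M_f\|v\|_{\nabla^{2}f(x)}$ for all $v$ and $\lambda(x,g)=M_f\lambda(x,f)$. So I would prove both claims for a \emph{standard} ($M_f=1$) self-concordant $g$ with $\lambda\coloneqq\lambda(x,g)<1$, obtaining the bound $\lambda/(1-\lambda)$ for the two distances in I and $\lambda^{2}/(1-\lambda)^{2}$ for the decrement in II, and then divide by $M_f$ and substitute $\lambda=M_f\lambda(x,f)$; the extra freedom ``$M\ge M_f$'' in II costs nothing because $M\mapsto Mt^{2}/(1-Mt)^{2}$ is nondecreasing on $\{Mt<1\}$. (The regime of interest is $\lambda(x,f)<1/M_f$, which is also what makes the feasibility claim in II meaningful; outside it the stated bound is $+\infty$.)

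The single analytic tool is the behavior of $g$ along a segment. For $x\in\inte\cK$ and $h\ne 0$, put $\phi(t)\coloneqq\|h\|_{\nabla^{2}g(x+th)}$; differentiating $\phi^{2}=\nabla^{2}g(x+th)[h,h]$ and using self-concordance gives $|\phi'|\le\phi^{2}$, so $1/\phi$ is $1$-Lipschitz, whence $\phi(t)\ge \phi(s)/(1+|t-s|\,\phi(s))$ unconditionally and $\phi(t)\le \phi(s)/(1-|t-s|\,\phi(s))$ whenever $|t-s|\,\phi(s)<1$. The upper bound, together with the barrier blow-up $g\to+\infty$ at $\partial\cK$, gives the Dikin-ellipsoid containment $\{\,\|y-x\|_{\nabla^{2}g(x)}<1\,\}\subseteq\inte\cK$. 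Polarizing the third-derivative inequality and running a Gr\"onwall argument along the segment then upgrades this to Hessian stability: if $\delta\coloneqq\|y-x\|_{\nabla^{2}g(x)}<1$ then $(1-\delta)^{2}\nabla^{2}g(x)\preccurlyeq\nabla^{2}g(y)\preccurlyeq(1-\delta)^{-2}\nabla^{2}g(x)$.

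For part I, take $x_g\coloneqq\argmin g=x_f$ and $h\coloneqq x-x_g$; the segment $[x_g,x]$ lies in $\inte\cK$ by convexity, so $\phi$ is defined on $[0,1]$. Since $\nabla g(x_g)=0$, the scalar function $t\mapsto g(x_g+th)$ has vanishing first derivative at $0$ and second derivative $\phi(t)^{2}$, so $\langle\nabla g(x),h\rangle=\int_0^1\phi(t)^{2}\,dt$, and Cauchy--Schwarz in the local norm bounds this by $\lambda\,\phi(1)$. Lower-bounding the integrand with the unconditional estimate above (centered at $t=1$ and at $t=0$) gives $\int_0^1\phi^{2}\ge \phi(1)^{2}/(1+\phi(1))$ and $\int_0^1\phi^{2}\ge \phi(0)^{2}/(1+\phi(0))$. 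Combining, $\phi(1)\le\lambda/(1-\lambda)$ directly, and then $\phi(0)^{2}/(1+\phi(0))\le \lambda^{2}/(1-\lambda)$, whose positive root is again $\lambda/(1-\lambda)$. Since $\phi(1)=\|x-x_g\|_{\nabla^{2}g(x)}$ and $\phi(0)=\|x-x_g\|_{\nabla^{2}g(x_g)}$, rescaling finishes I.

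For part II, let $p\coloneqq-\nabla^{-2}g(x)\nabla g(x)$, so $x^{+}=x+p$ with $\|p\|_{\nabla^{2}g(x)}=\lambda<1$; then $x^{+}\in\inte\cK$ by the Dikin containment. Because $\nabla g(x)+\nabla^{2}g(x)p=0$, Taylor's theorem gives $\nabla g(x^{+})=\int_0^1(\nabla^{2}g(x+sp)-\nabla^{2}g(x))p\,ds$; Hessian stability (with $\delta=s\lambda$, using that it is the larger of the two deviation coefficients) bounds the operator norm of $\nabla^{2}g(x+sp)-\nabla^{2}g(x)$ relative to $\nabla^{2}g(x)$ by $(1-s\lambda)^{-2}-1$, hence $\|\nabla g(x^{+})\|_{\nabla^{-2}g(x)}\le\lambda\int_0^1((1-s\lambda)^{-2}-1)\,ds=\lambda^{2}/(1-\lambda)$. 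Applying Hessian stability once more ($y=x^{+}$, $\delta=\lambda$) to pass from $\nabla^{-2}g(x)$ to $\nabla^{-2}g(x^{+})$ yields $\lambda(x^{+},g)\le(1-\lambda)^{-1}\cdot\lambda^{2}/(1-\lambda)=\lambda^{2}/(1-\lambda)^{2}$, and rescaling plus the monotonicity in $M$ noted at the outset gives II. The bulk of the work—the ``hard part''—is the Hessian-stability estimate and, in particular, turning it into the operator-norm bound on $\nabla^{2}g(x+sp)-\nabla^{2}g(x)$ that is integrated in part II; the segment identity, the Cauchy--Schwarz step, and the two elementary quadratics are routine, and the one further point needing care (classical for self-concordant functions) is deducing the Dikin-ellipsoid containment from the barrier blow-up condition.
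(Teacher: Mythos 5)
The paper offers no proof of this lemma at all: it is stated in the appendix as a standard fact from self-concordance theory (of the same kind as Lemma \ref{lem:Hessians} and Lemma \ref{lem:deakin}, the latter cited to Nesterov). Your proposal supplies the classical textbook argument, and I checked the computations: the rescaling $g=M_f^2 f$ to a standard self-concordant function is correct (Newton step and feasibility are invariant, norms and decrements scale by $M_f$); the differential inequality $|\phi'|\le\phi^2$, the identity $\langle\nabla g(x),x-x_g\rangle=\int_0^1\phi(t)^2dt$, the Cauchy--Schwarz step, and the two quadratic inequalities do yield $\phi(0)\vee\phi(1)\le\lambda/(1-\lambda)$; and in part II the Taylor integral with the relative operator-norm bound $(1-s\lambda)^{-2}-1$ integrates to $\lambda^2/(1-\lambda)$, and one more application of Hessian stability gives $\lambda^2/(1-\lambda)^2$, which rescales to the stated bound. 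Two remarks. First, the steps you defer as ``classical'' (Dikin-ellipsoid containment and the two-sided Hessian comparison) are exactly the paper's Lemma \ref{lem:deakin} and Lemma \ref{lem:Hessians}, so in the paper's context you could simply invoke them rather than re-derive them; if you do re-derive them, the Dikin containment needs the blow-up property in Definition \ref{def:seffunction} spelled out carefully, as you note, and positive-definiteness of the Hessian (automatic here since $\cK$ is compact) is implicitly used when dividing by $\phi$. Second, your restriction of the monotonicity in $M$ to the regime $M\lambda(x,f)<1$ is not just pedantry: for $M\lambda(x,f)>1$ the stated bound can actually fail, so the lemma must be read under that condition (your side remark that the bound is ``$+\infty$'' outside the regime is inaccurate, but nothing rests on it). Overall the proposal is correct and matches the standard proof the paper implicitly relies on.
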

	\begin{lemma}
		\label{lem:Hessians}
		Let $f\colon \inte \cK\rightarrow \reals$ be a self-concordant function with constant $M_f$ and $x \in \inte \cK$. Then, for any $w$ such that $r\coloneqq \|w - x\|_{\nabla^2 f(x)} < 1/M_f$, we have 
		\begin{align}
			(1-M_f r)^{2} \nabla^2 f(w) \preceq \nabla^2 f(x) \preceq (1-M_f r)^{-2}  \nabla^2 f(x).\nn 
		\end{align}
	\end{lemma}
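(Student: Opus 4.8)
The plan is the classical one-dimensional reduction along the segment joining $x$ and $w$, using only the third-order inequality from Definition~\ref{def:seffunction}. Write $x_t \coloneqq x + t(w-x)$ and, for a fixed but arbitrary direction $h\in\reals^d$, set $\phi_h(t) \coloneqq \Inner{h}{\nabla^2 f(x_t) h}$ for $t\in[0,1]$. One first checks that $x_t\in\inte\cK$ for all $t\in[0,1]$: after rescaling $f$ to a standard self-concordant function ($\tilde f = M_f^2 f$ satisfies the constant-$1$ version of Definition~\ref{def:seffunction}), this is exactly the statement that the open Dikin ellipsoid $\{v:\|v-x\|_{\nabla^2 f(x)}<1/M_f\}$ lies in $\inte\cK$, a standard property of self-concordant functions that blow up on $\partial\cK$ (and which also falls out of the ODE computation below taken with $h=w-x$); if moreover $w\in\inte\cK$, as in all applications, then $x_t\in\inte\cK$ already follows from convexity of $\cK$. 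Differentiating through the Hessian gives $\phi_h'(t) = \nabla^3 f(x_t)[w-x,h,h]$.

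The first substantive step is to upgrade Definition~\ref{def:seffunction} from the cubic form to the mixed form by polarization: a symmetric trilinear form $A$ with $|A[v,v,v]|\le\|v\|^3$ in some Euclidean norm obeys $|A[u,v,w]|\le\|u\|\,\|v\|\,\|w\|$. Applying this to $A=\nabla^3 f(x_t)/(2M_f)$ with the norm $\|\cdot\|_{\nabla^2 f(x_t)}$ yields $|\nabla^3 f(x_t)[w-x,h,h]|\le 2M_f\|w-x\|_{\nabla^2 f(x_t)}\|h\|_{\nabla^2 f(x_t)}^2$. Hence $\big|\tfrac{d}{dt}\log\phi_h(t)\big|\le 2M_f\|w-x\|_{\nabla^2 f(x_t)}$, and the remaining task is to control the ``moving'' local norm $\|w-x\|_{\nabla^2 f(x_t)}$ by the fixed quantity $r = \|w-x\|_{\nabla^2 f(x)}$.

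For that, specialize to $h=w-x$ and set $\psi(t)\coloneqq\|w-x\|_{\nabla^2 f(x_t)}^2=\phi_{w-x}(t)$; the displayed bound becomes $|\psi'(t)|\le 2M_f\,\psi(t)^{3/2}$, i.e.\ $\big|\tfrac{d}{dt}\psi(t)^{-1/2}\big|\le M_f$. Integrating from $0$ to $t$ with $\psi(0)=r^2$ gives $\psi(t)^{-1/2}\ge r^{-1}-M_f t$, which is strictly positive on $[0,1]$ because $r<1/M_f$; hence $\|w-x\|_{\nabla^2 f(x_t)}\le r/(1-M_f r t)$ for $t\in[0,1]$. Feeding this back into $\big|\tfrac{d}{dt}\log\phi_h(t)\big|\le 2M_f r/(1-M_f r t)$ and integrating over $t\in[0,1]$ yields $|\log\phi_h(1)-\log\phi_h(0)|\le -2\log(1-M_f r)$, that is $(1-M_f r)^2\le\Inner{h}{\nabla^2 f(w)h}\big/\Inner{h}{\nabla^2 f(x)h}\le(1-M_f r)^{-2}$. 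Since $h$ was arbitrary, this is precisely the two-sided operator inequality $(1-M_f r)^2\nabla^2 f(w)\preceq\nabla^2 f(x)\preceq(1-M_f r)^{-2}\nabla^2 f(w)$ (the right-hand side $\nabla^2 f(x)$ in the stated inequality appears to be a typo for $\nabla^2 f(w)$).

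The only step with any subtlety is the polarization inequality, where one must be careful that the constant stays exactly $2M_f$ and not something larger; everything after that is a routine Gr\"onwall-type integration of a scalar ODE, so I do not anticipate a real obstacle. The one piece of bookkeeping worth doing carefully is the verification that $x_t\in\inte\cK$ along the whole segment (handled above via the Dikin-ellipsoid property), which is needed only so that $\nabla^2 f(x_t)$ is well-defined and positive definite throughout the computation.
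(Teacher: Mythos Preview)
The paper does not actually supply a proof of Lemma~\ref{lem:Hessians}; it is listed in the appendix alongside the other standard self-concordance facts (Lemmas~\ref{lem:properties}, \ref{lem:deakin}, \ref{lem:inter0}) that are quoted from Nesterov's book and related references without argument. So there is no ``paper's own proof'' to compare against.

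Your argument is the standard textbook derivation (essentially the one in Nesterov's \emph{Lectures on Convex Optimization}) and is correct. You also correctly flag the typo: the right-hand Hessian in the displayed inequality should be $\nabla^2 f(w)$, not $\nabla^2 f(x)$; otherwise the upper bound is vacuous. The one step that deserves a citation rather than just a warning is the polarization inequality $|\nabla^3 f(x)[u,h,h]|\le 2M_f\|u\|_x\|h\|_x^2$: the na\"ive polarization of a symmetric cubic does \emph{not} preserve the constant, but for symmetric multilinear forms on a Euclidean space the injective norm equals the diagonal (polynomial) norm---this is Banach's theorem, and is what Nesterov invokes (his Corollary~5.1.1 together with the appendix lemma on symmetric forms). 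Once that is in hand, your two Gr\"onwall integrations (first for $\psi(t)=\|w-x\|_{\nabla^2 f(x_t)}^2$, then for $\log\phi_h$) are exactly right and give the claimed $(1-M_f r)^{\pm 2}$ sandwich.
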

	The following result from \cite[Theorem 5.1.5]{nesterov2018lectures} will be useful to show that the iterates of algorithms are always in the feasible set.
	\begin{lemma}
		\label{lem:deakin}
		Let $f\colon \inte \cK\rightarrow \reals$ be a self-concordant function with constant $M_f\geq 1$ and $x \in \inte \cK$. Then, $\cE_{x} \coloneqq \{w \in \reals^d\colon \|w-x\|_{x}<1/M_f \}\subseteq \inte \cK$. Furthermore, for all $w\in \cE_{x}$, we have $$\|w-x\|_{w} \leq \frac{\|w- x\|_{x}}{1-M_f \|w- x\|_{x}}.$$
	\end{lemma}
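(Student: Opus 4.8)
The plan is to reduce the statement to the normalized case $M_f=1$, which is exactly \cite[Theorem 5.1.5]{nesterov2018lectures}, by rescaling the function, and then to read off the local‑norm bound from Lemma~\ref{lem:Hessians}. Concretely, I would introduce $\tilde f \coloneqq M_f^2 f$ on $\inte\cK$. Since $\nabla^2\tilde f = M_f^2\nabla^2 f$ and $\nabla^3\tilde f = M_f^2\nabla^3 f$, we get $\|u\|_{\nabla^2\tilde f(x)} = M_f\,\|u\|_{\nabla^2 f(x)}$ for all $x\in\inte\cK$ and $u\in\reals^d$, and hence $|\nabla^3\tilde f(x)[u,u,u]| = M_f^2\,|\nabla^3 f(x)[u,u,u]| \le 2M_f^3\,\|u\|_{\nabla^2 f(x)}^3 = 2\,\|u\|_{\nabla^2\tilde f(x)}^3$. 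Since multiplication by the positive constant $M_f^2$ also preserves the blow‑up condition $\tilde f(x_k)\to+\infty$ as $x_k\to\partial\cK$, the function $\tilde f$ is self‑concordant with constant $1$ in the sense of Definition~\ref{def:seffunction}, with $\dom\tilde f=\inte\cK$.

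Next I would apply \cite[Theorem 5.1.5]{nesterov2018lectures} to $\tilde f$: the open Dikin ellipsoid $\{w:\|w-x\|_{\nabla^2\tilde f(x)}<1\}$ is contained in $\dom\tilde f=\inte\cK$. Rewriting the defining inequality via $\|w-x\|_{\nabla^2\tilde f(x)} = M_f\,\|w-x\|_x$ shows that this ellipsoid is precisely $\cE_x$, which gives the first claim $\cE_x\subseteq\inte\cK$.

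For the local‑norm bound, fix $w\in\cE_x$ and set $r\coloneqq\|w-x\|_x$, so that $0\le r<1/M_f$. Lemma~\ref{lem:Hessians} then gives $(1-M_f r)^2\,\nabla^2 f(w)\preceq\nabla^2 f(x)$; contracting both sides with the vector $w-x$ yields $(1-M_f r)^2\,\|w-x\|_w^2\le\|w-x\|_x^2=r^2$, and taking square roots (using $1-M_f r>0$) gives $\|w-x\|_w\le r/(1-M_f r)=\|w-x\|_x/(1-M_f\|w-x\|_x)$, as claimed. I do not anticipate a genuine obstacle: the argument is essentially a change of normalization, and the only points demanding care are verifying that the constant rescaling preserves both the third‑derivative inequality and the boundary blow‑up — so that Nesterov's theorem applies to $\tilde f$ verbatim — and correctly tracking the factor $M_f$ when passing between the two conventions.
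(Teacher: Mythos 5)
Your proposal is correct. Note, however, that the paper offers no proof of this lemma at all: it is quoted directly from \cite[Theorem~5.1.5]{nesterov2018lectures}, whose statement already accommodates a general self-concordance constant $M_f$ (both the Dikin ellipsoid $\{w:\|w-x\|_x<1/M_f\}\subseteq\dom f$ and the bound $\|w-x\|_w\leq \|w-x\|_x/(1-M_f\|w-x\|_x)$), so strictly speaking no argument is needed beyond the citation. Your route is a clean reconstruction: the rescaling $\tilde f=M_f^2 f$ correctly reduces to the normalized case (the third-derivative inequality and the boundary blow-up are indeed preserved, and $\|\cdot\|_{\nabla^2\tilde f(x)}=M_f\|\cdot\|_{\nabla^2 f(x)}$ identifies the two ellipsoids), and the local-norm bound follows from the left inequality of Lemma~\ref{lem:Hessians} exactly as you contract it with $w-x$; you also order the steps properly, establishing $w\in\inte\cK$ before invoking Lemma~\ref{lem:Hessians}, which implicitly needs $\nabla^2 f(w)$ to be defined. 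The only caveat is that Lemma~\ref{lem:Hessians} is itself an unproved standard fact in the paper, so your argument trades one citation for another plus the rescaling; this is harmless (no circularity, since the Hessian-comparison lemma is not derived from the present one in the paper), but it buys nothing over citing Nesterov's theorem in its general-constant form.
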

	Finally, we will also make use of the following result due to \cite{mhammedi2022Newton}:
	\begin{lemma}
		\label{lem:inter0}
		Let $f\colon \inte \cK \rightarrow \reals$ be a self-concordant function with constant $M_{f}>0$. Then, for any $x, w \in \inte \cK$ such that $r\coloneqq \|x-w\|_{\nabla^2 f(x)}<1/M_{f}$, we have 
		\begin{align}
			\|\nabla f(x) - \nabla f(w)\|^2_{\nabla^{-2}f(x)}  \leq \frac{1}{(1-M_{f} r )^{2}} \|w- x\|^2_{\nabla^{2}f(x)}. \nn 
		\end{align}
	\end{lemma}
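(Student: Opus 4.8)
The plan is to write the gradient difference as a path integral of the Hessian along the segment joining $x$ to $w$, and then bound the integrand using the self-concordance estimate on the Hessian from Lemma~\ref{lem:Hessians}. First I would set $h \coloneqq w - x$ and observe that for every $t \in [0,1]$ the point $x_t \coloneqq x + t h$ satisfies $\|x_t - x\|_{\nabla^2 f(x)} = t r \le r < 1/M_f$; in particular $x_t$ lies in the Dikin ellipsoid $\{y : \|y-x\|_{\nabla^2 f(x)} < 1/M_f\}$, which is contained in $\inte \cK$ (a standard property of self-concordant functions; cf.\ Lemma~\ref{lem:deakin}), so $f$ is $C^3$ along the whole segment. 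By the fundamental theorem of calculus applied to $t \mapsto \nabla f(x_t)$,
\begin{align}
\nabla f(w) - \nabla f(x) = \int_0^1 \nabla^2 f(x_t)[h]\, dt. \nn
\end{align}

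Next I would dualize the local norm. Since $\nabla^2 f(x) \succ 0$, we have $\|\nabla f(w) - \nabla f(x)\|_{\nabla^{-2} f(x)} = \sup_{\|z\|_{\nabla^2 f(x)} \le 1} \langle \nabla f(w) - \nabla f(x), z\rangle$, and by the displayed identity this equals $\sup_{\|z\|_{\nabla^2 f(x)} \le 1} \int_0^1 \langle \nabla^2 f(x_t) h, z\rangle \, dt$. Fix $z$ with $\|z\|_{\nabla^2 f(x)} \le 1$. Cauchy--Schwarz in the inner product induced by the positive definite matrix $\nabla^2 f(x_t)$ gives $\langle \nabla^2 f(x_t) h, z\rangle \le \|h\|_{\nabla^2 f(x_t)} \, \|z\|_{\nabla^2 f(x_t)}$. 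Applying Lemma~\ref{lem:Hessians} at the base point $x$ with displacement $x_t - x$ (whose $\nabla^2 f(x)$-norm is $t r < 1/M_f$) yields $\nabla^2 f(x_t) \preceq (1 - M_f t r)^{-2} \nabla^2 f(x)$, whence $\|h\|_{\nabla^2 f(x_t)} \le (1 - M_f t r)^{-1} r$ and $\|z\|_{\nabla^2 f(x_t)} \le (1 - M_f t r)^{-1}$, so that $\langle \nabla^2 f(x_t) h, z\rangle \le r\,(1 - M_f t r)^{-2}$ uniformly in $z$.

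Finally I would integrate this pointwise bound:
\begin{align}
\int_0^1 \frac{r}{(1 - M_f t r)^2}\, dt = \frac{1}{M_f}\left(\frac{1}{1 - M_f r} - 1\right) = \frac{r}{1 - M_f r}, \nn
\end{align}
and take the supremum over $z$ to obtain $\|\nabla f(w) - \nabla f(x)\|_{\nabla^{-2} f(x)} \le r/(1 - M_f r)$; squaring and recalling $r = \|w - x\|_{\nabla^2 f(x)}$ gives the claimed inequality. There is essentially no obstacle here — the argument is a routine self-concordance computation — and the only points requiring mild care are checking that the entire segment $\{x_t\}_{t\in[0,1]}$ stays inside $\inte \cK$ so that Lemma~\ref{lem:Hessians} is applicable along it, and working with the dual characterization of the local norm so that the Hessian comparison can be applied inside the integral uniformly in the test direction $z$.
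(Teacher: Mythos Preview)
Your proof is correct and follows the standard self-concordance argument: integrate the Hessian along the segment, use the Hessian comparison (Lemma~\ref{lem:Hessians}) pointwise, and compute the resulting one-dimensional integral. Note that the paper does not actually supply a proof of Lemma~\ref{lem:inter0}; it attributes the result to~\cite{mhammedi2022Newton} and states it without argument, so there is no in-paper proof to compare against. Your write-up would serve perfectly well as the omitted proof.
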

	
	\section{Technical Lemmas}
	\label{sec:techlemmas}
	Our analysis relies on the crucial fact that the Newton decrement can be sufficiently decreased by taking a Newton step using only approximate gradients and Hessians. We state this fact next; the proof is in \S\ref{sec:Newtondecproof}.
	\begin{lemma}[Decrease in the Newton decrement]\label{lem:Newtondec}
		Let $\Phi$ be a self-concordant function over $\inte \cK$ with constant $M_\Phi>0$, and let $y\in \reals^d$ be such that $\lambda(y, \Phi)\leq 1/(40M_{\Phi})$. Further, let $H\in \reals^{d\times d}$ and $\what \nabla_y\in \reals^d$ be such that
		\begin{gather}
			\|\what \nabla_y - \nabla \Phi(y)\|_{\nabla^{-2} \Phi(y)} \leq \veps< \frac{1}{40 M_\Phi},  \label{eq:gradbound}\\
			(1-\alpha)\nabla^2 \Phi(y) \preccurlyeq  H \preccurlyeq (1+\alpha)\nabla^2 \Phi(y),\label{eq:spectralbound}
		\end{gather}
		for $\alpha < 1/5$. 
		Then, for $\tilde y^+ \coloneqq  y- H^{-1}\nabla \Phi(y)$ and $y^+ \coloneqq y\emph{}  - H^{-1}\what \nabla_y$, we have
		\begin{align}
			&\lambda(\tilde y^+, \Phi) \leq 9M_\Phi \lambda(y, \Phi)^2 + 2.5\alpha \lambda(y, \Phi),\nn \\ %
			&\lambda(y^+, \Phi) \leq 20(1+\alpha)\veps + (1+20(1+\alpha)\veps) \cdot \lambda(\tilde y^+, \Phi).\nn  %
		\end{align}
	\end{lemma}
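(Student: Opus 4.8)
The plan is to split the analysis into two stages matching the two inequalities claimed: first analyze the exact-gradient, approximate-Hessian step $\tilde y^+ = y - H^{-1}\nabla\Phi(y)$, then account for the gradient error in passing from $\tilde y^+$ to $y^+ = y - H^{-1}\what\nabla_y$. For the first stage, I would write $H^{-1} = \nabla^{-2}\Phi(y) + E$ where $E$ captures the spectral discrepancy; by \eqref{eq:spectralbound}, $E$ satisfies $\|E\|_{\nabla^2\Phi(y)\to\nabla^2\Phi(y)} \lesssim \alpha$ (since $(1-\alpha)^{-1}-1$ and $1-(1+\alpha)^{-1}$ are both $\le 2.5\alpha$ for $\alpha<1/5$, which is where the $2.5\alpha$ constant comes from). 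Then $\tilde y^+ = y^{\mathrm{Newton}} - E\nabla\Phi(y)$ where $y^{\mathrm{Newton}} = y - \nabla^{-2}\Phi(y)\nabla\Phi(y)$ is the exact Newton step. I would bound $\lambda(\tilde y^+,\Phi)$ by first controlling $\lambda(y^{\mathrm{Newton}},\Phi)$ via Lemma~\ref{lem:properties}(II) with $M = M_\Phi$, giving a term of order $M_\Phi\lambda(y,\Phi)^2$ (the quadratic convergence of Newton), and then controlling the extra displacement $E\nabla\Phi(y)$, whose $\nabla^2\Phi(y)$-norm is $\lesssim \alpha\lambda(y,\Phi)$; feeding this perturbation through Lemma~\ref{lem:inter0} (to convert displacement into gradient change in the dual norm) and Lemma~\ref{lem:Hessians} (to compare Hessians at $y$ and $\tilde y^+$, valid since the total displacement is $< 1/M_\Phi$ under the hypothesis $\lambda(y,\Phi)\le 1/(40M_\Phi)$) yields the claimed $\lambda(\tilde y^+,\Phi)\le 9M_\Phi\lambda(y,\Phi)^2 + 2.5\alpha\lambda(y,\Phi)$ after collecting constants.

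For the second stage, note $y^+ - \tilde y^+ = -H^{-1}(\what\nabla_y - \nabla\Phi(y))$, a displacement whose $\nabla^2\Phi(y)$-norm is at most $(1+\alpha)^{1/2}\cdot\ldots$ — more precisely, using \eqref{eq:spectralbound} to say $\|H^{-1}v\|_{\nabla^2\Phi(y)} \le (1+\alpha)^{1/2}(1-\alpha)^{-1}\|v\|_{\nabla^{-2}\Phi(y)}$-type bounds — combined with \eqref{eq:gradbound} this displacement has norm $\lesssim \veps$. I would then relate $\lambda(y^+,\Phi)$ to $\lambda(\tilde y^+,\Phi)$ by: (i) the gradient at $y^+$ differs from that at $\tilde y^+$ by at most (via Lemma~\ref{lem:inter0} applied at $\tilde y^+$, after using Lemma~\ref{lem:Hessians} to pass between Hessians at $y$ and $\tilde y^+$) something of order the displacement norm, i.e.~$\lesssim \veps$; and (ii) the local norm at $y^+$ is comparable to that at $\tilde y^+$ up to a multiplicative $(1+O(M_\Phi\veps))$ factor, again by Lemma~\ref{lem:Hessians}. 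Packaging these via the triangle inequality in the appropriate local dual norm gives $\lambda(y^+,\Phi)\le 20(1+\alpha)\veps + (1+20(1+\alpha)\veps)\lambda(\tilde y^+,\Phi)$; the factor $20(1+\alpha)$ absorbs the Hessian-comparison constants from Lemmas~\ref{lem:Hessians} and~\ref{lem:inter0}.

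The main obstacle I anticipate is bookkeeping the web of local-norm changes: every quantity lives in a norm induced by $\nabla^2\Phi$ evaluated at one of $y$, $y^{\mathrm{Newton}}$, $\tilde y^+$, or $y^+$, and each transfer between these points costs a $(1\pm M_\Phi r)^{\pm 1}$ factor from Lemma~\ref{lem:Hessians}. The key enabling fact is that all displacements are controlled: $\lambda(y,\Phi)\le 1/(40M_\Phi)$ together with $\alpha<1/5$ and $\veps< 1/(40M_\Phi)$ keeps every relevant radius comfortably below $1/M_\Phi$, so Lemmas~\ref{lem:Hessians}, \ref{lem:deakin}, and~\ref{lem:inter0} all apply and all the $(1\pm M_\Phi r)$ factors are bounded by absolute constants close to $1$; the stated numerical constants ($9$, $2.5$, $20$) then come out of a careful but routine accounting. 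A secondary subtlety is confirming feasibility ($\tilde y^+, y^+ \in \inte\cK$), which follows from Lemma~\ref{lem:deakin} once the displacement bounds are in hand.
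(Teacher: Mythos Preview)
Your plan is sound and would yield the stated bounds, but your Stage~1 argument follows a genuinely different route from the paper's. You decompose $\tilde y^+ = y^{\mathrm{Newton}} - E\nabla\Phi(y)$, invoke Lemma~\ref{lem:properties}(II) for the quadratic contraction of the \emph{exact} Newton step, and then treat $-E\nabla\Phi(y)$ as a perturbation handled via Lemmas~\ref{lem:Hessians} and~\ref{lem:inter0}. The paper instead analyzes the approximate Newton step $y\mapsto y+h$ (with $h=-H^{-1}\nabla\Phi(y)$) directly: it sets $z=\nabla^{-2}\Phi(y{+}h)\nabla\Phi(y{+}h)$ and $F(w)=\nabla\Phi(w)^\top z$, computes $DF(y)[h]$ to isolate a term of size $O(\alpha)\lambda(y,\Phi)\|z\|$ coming from $\|H^{-1/2}\nabla^2\Phi(y)H^{-1/2}-I\|$, and then integrates $(F\circ\y)'(s)-(F\circ\y)'(0)$ along $\y(s)=y+sh$ using Lemma~\ref{lem:Hessians} to get the $O(M_\Phi)\lambda(y,\Phi)^2$ term; dividing $\lambda(\tilde y^+,\Phi)^2=F(y{+}h)$ by $\lambda(\tilde y^+,\Phi)$ gives the bound. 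Your modular approach reuses Lemma~\ref{lem:properties}(II) as a black box and would actually deliver somewhat sharper constants (roughly $1.2$ in place of $9$ and $1.5$ in place of $2.5$); the paper's path-integral argument is self-contained and avoids the intermediate point $y^{\mathrm{Newton}}$ and the associated extra round of Hessian transfers.

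For Stage~2 your plan coincides with the paper's: bound $\|\tilde y^+-y^+\|_{\nabla^2\Phi(\tilde y^+)}$ via \eqref{eq:spectralbound}--\eqref{eq:gradbound} (the paper gets $5(1{+}\alpha)\veps$), apply Lemma~\ref{lem:inter0} at $\tilde y^+$, use Lemma~\ref{lem:Hessians} to pass between $\nabla^{-2}\Phi(y^+)$ and $\nabla^{-2}\Phi(\tilde y^+)$, and finish with the triangle inequality in the dual local norm. Your note about feasibility via Lemma~\ref{lem:deakin} is also correct.
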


	Next, we show that as long as the Newton decrement is small enough at the current iterate $w_{t-1}$, the ``intermediate'' Newton iterates $(w^{m}_t)$ remain close to the landmark point $u_{t-1}$; this will be important for the proof of Theorem \ref{thm:main}. The proof is in \S\ref{sec:Newtonupdatesproof}.
	\begin{lemma}[Invariance under Newton iterations]\label{lem:Newtonupdates} 
		Let $\Phi$ be a self-concordant function over $\inte \cK$ with constant $M_{\Phi}>0$. Let $b>0$, $m_\ifinal\coloneqq\Theta(\log \frac{1}{\veps M_\Phi})$, $\alpha \leq 1/(1000M_\Phi)$, $\veps<1/(20000M_\Phi)$, $\alpha=0.001$, and $\eta \leq 1/(1000M_{\Phi}b)$. Further, let $(w_t,w_t^m,u_t,g_t,H_t)$ be as in Algorithm \ref{algorithmbox} with input $(\eta, \veps,\alpha, m_\ifinal)$. Suppose that at round $t-1$ of Algorithm \ref{algorithmbox}, we have 
		\begin{align}
			\lambda(w_{t-1}, \Phi_{t-1}) \leq \alpha \quad \text{and}\quad \|u_{t-1} - w_{t-1}\|_{\nabla^2 \Phi(u_{t-1})} \leq \frac{1}{40M_\Phi}.\label{eq:empty}
		\end{align}
		For $t>1$, if the sub-gradient $g_{t-1}$ at round $t-1$ satisfies $\|g_{t-1}\|_{\nabla^{-2}\Phi(w_{t-1})}\leq b$, then
		\begin{align}
			\lambda({w^m_{t}}, \Phi_t) \leq \left(\frac{15}{16}\right)^{m-1} \lambda(w^1_{t}, \Phi_{t}) + 500\veps \leq  \frac{1}{40 M_\Phi}.\label{eq:Newtondecreaserate}
		\end{align}
		Furthermore, we have for all $m\in[m_{\ifinal}]$:
		\begin{gather}
			\frac{1}{2}\nabla^2 \Phi({w^m_t}) \preccurlyeq \nabla^2 \Phi(w^\star_t) \preccurlyeq 2\nabla^2 \Phi({w^m_t}),\label{eq:extracondition2}\\
			\|w_t^m - u_{t-1}\|_{H_{t-1}}\leq \frac{1}{10 M_\Phi},\label{eq:closetolandmark}  \\
			\|w_t^m - w_t^\star\|_{{\nabla^2 \Phi(w_t^\star)}} \leq %
			\frac{1}{49M_\Phi}\left(\frac{15}{16}\right)^{m-1} + 240\veps,\label{eq:closetooptimum}\\
			\hspace{-0.4cm}		\Big|\|{w^{m}_t} - u_{t-1}\|_{H_{t-1}} - \|w_{t-1} - u_{t-1} \|_{H_{t-1}}\Big|\leq 2\eta\|g_{t-1}\|_{\nabla^2 \Phi(w_{t-1})} + \frac{1}{40M_\Phi}\left(\frac{15}{16}\right)^{m-1} + 500\veps + 2\alpha,\label{eq:extracondition}
		\end{gather}
		where  ${w^\star_t} \in \argmin_{w\in \sett} \Phi_t(w)$ is the optimum solution of $\Phi_t$.
	\end{lemma}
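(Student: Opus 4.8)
The plan is to prove the five displays \eqref{eq:Newtondecreaserate}--\eqref{eq:extracondition} simultaneously by strong induction on the inner-loop index $m\in[m_\ifinal]$, with the round $t$ fixed and \eqref{eq:empty} in force. Two preliminary observations streamline everything. First, $\Phi_t$ differs from $\Phi_{t-1}$ by the linear term $\langle\,\cdot\,, s_{t-1}\rangle = \eta\langle\,\cdot\,,\sum_{s<t}g_s\rangle$, so $\Phi_t$ is self-concordant with the same constant $M_\Phi$, one has $\nabla^2\Phi_t\equiv\nabla^2\Phi\equiv\nabla^2\Phi_{t-1}$ and $\nabla\Phi_t(\cdot)=\nabla\Phi_{t-1}(\cdot)+\eta g_{t-1}$, and local norms with respect to $\nabla^2\Phi_t$ coincide with those with respect to $\nabla^2\Phi$; hence Lemmas \ref{lem:properties}, \ref{lem:Hessians}, \ref{lem:deakin}, \ref{lem:inter0} all apply to $\Phi_t$. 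Second, $H_{t-1}$ is always a $(1\pm\alpha)$-spectral approximation of $\nabla^2\Phi(u_{t-1})$: inspecting the two branches of the landmark update in Algorithm \ref{algorithmbox}, either $(u_{t-1},H_{t-1})$ were freshly recomputed at round $t-1$, or they were inherited unchanged from an earlier round at which they were recomputed. Base case $m=1$: here $w_t^1=w_{t-1}$, so $\lambda(w_t^1,\Phi_t)\le\lambda(w_{t-1},\Phi_{t-1})+\eta\|g_{t-1}\|_{\nabla^{-2}\Phi(w_{t-1})}\le\alpha+\eta b$, which the parameter choices put below every threshold used, giving \eqref{eq:Newtondecreaserate}; since this decrement is $<1/M_\Phi$, Lemma \ref{lem:properties}(I) applied to $f=\Phi_t$ at $x=w_{t-1}$ gives \eqref{eq:closetooptimum}, and feeding that radius into Lemma \ref{lem:Hessians} (with $x=w_t^\star$, $w=w_t^1$) gives \eqref{eq:extracondition2}; for \eqref{eq:closetolandmark} combine $\|u_{t-1}-w_{t-1}\|_{\nabla^2\Phi(u_{t-1})}\le\tfrac1{40M_\Phi}$ with $H_{t-1}\preccurlyeq(1+\alpha)\nabla^2\Phi(u_{t-1})$; and \eqref{eq:extracondition} is trivial at $m=1$ since its left-hand side vanishes.

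Inductive step $m\to m+1$, part one (the decrement). I verify the hypotheses of Lemma \ref{lem:Newtondec} at $y=w_t^m$ with $H=H_{t-1}$, $\what\nabla_y=\wtilde\nabla_t^m$. The spectral bound \eqref{eq:spectralbound} follows from the inductive \eqref{eq:closetolandmark} at $m$ together with Lemma \ref{lem:Hessians}: since $\|w_t^m-u_{t-1}\|_{\nabla^2\Phi(u_{t-1})}$ is small, $\nabla^2\Phi(u_{t-1})$ and $\nabla^2\Phi(w_t^m)$ are spectrally close, so $H_{t-1}\approx_\alpha\nabla^2\Phi(u_{t-1})$ transfers to a $(1\pm\alpha')$-approximation of $\nabla^2\Phi_t(w_t^m)$ with $\alpha'<1/5$. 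The gradient bound \eqref{eq:gradbound} is immediate since $\wtilde\nabla_t^m=\what\nabla_t^m+s_{t-1}$ and $\nabla\Phi_t(w_t^m)=\nabla\Phi(w_t^m)+s_{t-1}$, so the error is exactly the gradient-oracle error $\le\veps$. As $w_t^{m+1}=w_t^m-H_{t-1}^{-1}\wtilde\nabla_t^m$ is precisely the point $y^+$ of Lemma \ref{lem:Newtondec}, composing its two conclusions gives $\lambda(w_t^{m+1},\Phi_t)\lesssim 9M_\Phi\lambda(w_t^m,\Phi_t)^2+2.5\alpha'\lambda(w_t^m,\Phi_t)+20\veps$, and using $\lambda(w_t^m,\Phi_t)\le\tfrac1{40M_\Phi}$ (inductive \eqref{eq:Newtondecreaserate}) the quadratic term is dominated, yielding $\lambda(w_t^{m+1},\Phi_t)\le\tfrac{15}{16}\lambda(w_t^m,\Phi_t)+C\veps$; combined with the inductive \eqref{eq:Newtondecreaserate} this closes it at $m+1$, the constants being arranged so the accumulated additive error stays $\le500\veps$ and the whole bound $\le\tfrac1{40M_\Phi}$. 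Feasibility $w_t^{m+1}\in\inte\cK$ follows from Lemma \ref{lem:deakin} since the Newton step has small $\nabla^2\Phi(w_t^m)$-length. Given \eqref{eq:Newtondecreaserate} at $m+1$, displays \eqref{eq:closetooptimum} and \eqref{eq:extracondition2} at $m+1$ follow exactly as in the base case (Lemma \ref{lem:properties}(I), then Lemma \ref{lem:Hessians}), now carrying the geometric factor $(15/16)^m$.

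Inductive step, part two (the drift bounds \eqref{eq:closetolandmark}, \eqref{eq:extracondition}), which is where the stability argument proper enters. Writing $w_t^{m+1}-w_t^1=-\sum_{m'=1}^m H_{t-1}^{-1}\wtilde\nabla_t^{m'}$ and measuring in $\|\cdot\|_{H_{t-1}}$,
$\|w_t^{m+1}-w_t^1\|_{H_{t-1}}\le\sum_{m'=1}^m\|\wtilde\nabla_t^{m'}\|_{H_{t-1}^{-1}}\le(1+\alpha')^{1/2}\sum_{m'=1}^m\big(\lambda(w_t^{m'},\Phi_t)+\veps\big)$,
using $H_{t-1}^{-1}\approx\nabla^{-2}\Phi(w_t^{m'})$, which is available from the inductive \eqref{eq:closetolandmark} at each $m'\le m$. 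The geometric estimate \eqref{eq:Newtondecreaserate} bounds the sum by $O\!\big(\lambda(w_t^1,\Phi_t)\big)+O(\veps\,m_\ifinal)$; adding the base bound $\|w_t^1-u_{t-1}\|_{H_{t-1}}\le\sqrt{1+\alpha}/(40M_\Phi)$ and using $\lambda(w_t^1,\Phi_t)\le\alpha+\eta b$ with the stated parameters gives \eqref{eq:closetolandmark} at $m+1$. For \eqref{eq:extracondition} at $m+1$, the reverse triangle inequality bounds its left-hand side by the same quantity $\|w_t^{m+1}-w_t^1\|_{H_{t-1}}$, and I split off the $m'=1$ summand $\|\wtilde\nabla_t^1\|_{H_{t-1}^{-1}}\lesssim\|\nabla\Phi_{t-1}(w_{t-1})\|_{\nabla^{-2}\Phi(w_{t-1})}+\eta\|g_{t-1}\|_{\nabla^{-2}\Phi(w_{t-1})}+\veps$, which produces the $2\alpha+2\eta\|g_{t-1}\|_{\nabla^{-2}\Phi(w_{t-1})}$ part, while the tail $m'\ge2$ contributes the geometric remainder $(15/16)^m/(40M_\Phi)+O(\veps)$ by \eqref{eq:Newtondecreaserate}, matching the claimed right-hand side.

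The only real obstacle is the apparent circularity of the induction: Lemma \ref{lem:Newtondec} can certify a decrease of the Newton decrement at $w_t^m$ only when $H_{t-1}$ is spectrally faithful at $w_t^m$, i.e.\ only when $w_t^m$ is close in Hessian norm to the landmark $u_{t-1}$; yet that closeness is itself a consequence of the decrement having been shrinking along $w_t^1,\dots,w_t^m$. The strong induction on $m$ untangles this, and it goes through precisely because the contraction factor $15/16<1$ leaves enough margin that the telescoped drift $\sum_{m'}\|w_t^{m'+1}-w_t^{m'}\|_{H_{t-1}}$ --- a sum of step lengths each comparable to the running decrement --- stays a controlled multiple of $\lambda(w_t^1,\Phi_t)\le\alpha+\eta b$, which the parameter budget keeps safely inside the radii $1/(10M_\Phi)$, $1/(40M_\Phi)$ on which Lemmas \ref{lem:Hessians} and \ref{lem:Newtondec} rely. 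The remaining work is the routine bookkeeping that the per-step approximate-gradient error ($O(\veps)$, over $m_\ifinal=\Theta(\log\tfrac1{\veps M_\Phi})$ inner steps) and the per-round jump $\eta g_{t-1}$ fit inside the stated slack, which follows from the constraints on $\eta,\veps,\alpha$.
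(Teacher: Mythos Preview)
Your induction skeleton and the ``part one'' (the decrement contraction via Lemma~\ref{lem:Newtondec}) match the paper's argument essentially line for line. The divergence is in ``part two'', and there the telescoping route you take for \eqref{eq:closetolandmark} and \eqref{eq:extracondition} does not deliver the stated bounds.

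Concretely, your bound $\|w_t^{m+1}-w_t^1\|_{H_{t-1}}\le\sum_{m'=1}^m\|\wtilde\nabla_t^{m'}\|_{H_{t-1}^{-1}}$ together with \eqref{eq:Newtondecreaserate} gives a tail contribution $\sum_{m'=2}^m\lambda(w_t^{m'},\Phi_t)\lesssim 15\,\lambda(w_t^1,\Phi_t)+O(\veps\,m)$, not ``$(15/16)^m/(40M_\Phi)+O(\veps)$'' as you assert. A geometric series summed is a constant multiple of its first term, not its last; so your telescoped estimate can never produce the decaying factor $(15/16)^{m-1}$ required in \eqref{eq:extracondition}, and it also carries an $O(\veps\,m_\ifinal)$ term that is not obviously within the $1/(10M_\Phi)$ budget of \eqref{eq:closetolandmark} for the full parameter range $\veps<1/(20000M_\Phi)$.

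The paper avoids this accumulation by anchoring at the minimizer $w_t^\star$. In the base case it establishes, once and for all, two $m$-independent facts: $\|u_{t-1}-w_t^\star\|_{H_{t-1}}\le 1/(20M_\Phi)$ and $\tfrac45\nabla^2\Phi(w_t^\star)\preccurlyeq H_{t-1}\preccurlyeq\tfrac54\nabla^2\Phi(w_t^\star)$. With these in hand the drift bounds at step $m+1$ follow from a single triangle inequality through $w_t^\star$: $\|w_t^{m+1}-u_{t-1}\|_{H_{t-1}}\le\|w_t^{m+1}-w_t^\star\|_{H_{t-1}}+\|w_t^\star-u_{t-1}\|_{H_{t-1}}$, and likewise $\|w_t^{m+1}-w_t^1\|_{H_{t-1}}\le\|w_t^{m+1}-w_t^\star\|_{H_{t-1}}+\|w_t^\star-w_t^1\|_{H_{t-1}}$. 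The first summand inherits the $(15/16)^m$ decay directly from the decrement bound (via Lemma~\ref{lem:properties}(I) and the spectral relation $H_{t-1}\approx\nabla^2\Phi(w_t^\star)$); the second is the fixed quantity $\lesssim\alpha+\eta\|g_{t-1}\|_{\nabla^{-2}\Phi(w_{t-1})}$. No sum over $m'$ appears, and the constants close. This anchoring step is the ingredient your plan is missing.
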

	We note that we have not made an attempt to optimize over the constants in Lemma \ref{lem:Newtonupdates}.

	\begin{lemma}\label{lem:scaledcomparator}
		Let  $\Phi$ be a $(M_{\Phi}, \nu)$-self-concordant barrier for $\cK$, and let $w^\star \in \argmin_{w\in \sett} \Phi(w)$. Further, suppose that the losses satisfy:
		\begin{align}
			\sup_{w\in \inte \sett, t\in[T]}	\ell_t\left(\left(1-\frac{1}{T}\right)\cdot w + \frac{1}{T} \cdot w^\star\right)- \ell(w)\leq O\left(\frac{1}{\sqrt{T}}\right),   \label{eq:losses2}
		\end{align}
		Then, for any $w \in \sett$, there exists $\tilde w\in \cK_{1/T}$ (where $\cK_c$ is as in \eqref{eq:shrankset}) such that 
		\begin{align}
			\sum_{t=1}^T (\ell_t(w_t)-\ell_t(w)) \leq 	\sum_{t=1}^T (\ell_t(w_t)-\ell_t(\tilde w) + O(\sqrt{T}).
		\end{align}
	\end{lemma}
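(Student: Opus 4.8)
The plan is to use the classical ``compete against a slightly shrunk comparator'' reduction, where the only real choice is the direction in which to shrink; here the hypothesis \eqref{eq:losses2} dictates that we shrink towards the analytic center $w^\star\in\argmin_{w\in\sett}\Phi(w)$. Concretely, given an arbitrary $w\in\sett$, I would set $\tilde w\coloneqq(1-\tfrac1T)w+\tfrac1T w^\star$ and prove that this choice works.

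First I would verify that $\tilde w\in\cK_{1/T}$: by the definition \eqref{eq:shrankset}, $\cK_{1/T}=(1-\tfrac1T)\sett\oplus\{\tfrac1T w^\star\}=\{(1-\tfrac1T)v+\tfrac1T w^\star : v\in\sett\}$, so taking $v=w$ gives the membership immediately (and since $w^\star\in\inte\sett$, in fact $\tilde w\in\inte\sett$, so the loss values below are well defined). Next I would telescope the regret against $w$ through $\tilde w$: for each round,
\begin{align}
\ell_t(w_t)-\ell_t(w)=\big(\ell_t(w_t)-\ell_t(\tilde w)\big)+\big(\ell_t(\tilde w)-\ell_t(w)\big),\nn
\end{align}
and summing over $t\in[T]$ gives $\sum_{t=1}^T(\ell_t(w_t)-\ell_t(w))=\sum_{t=1}^T(\ell_t(w_t)-\ell_t(\tilde w))+\sum_{t=1}^T(\ell_t(\tilde w)-\ell_t(w))$. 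Finally, the hypothesis \eqref{eq:losses2} applied round by round gives $\ell_t(\tilde w)-\ell_t(w)=\ell_t\big((1-\tfrac1T)w+\tfrac1T w^\star\big)-\ell_t(w)\le O(1/\sqrt T)$, so the second sum is at most $T\cdot O(1/\sqrt T)=O(\sqrt T)$, and substituting this bound back yields exactly the claimed inequality.

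I do not expect any substantive obstacle --- the computation above is essentially the whole proof. The one point that needs a word of care is that the supremum in \eqref{eq:losses2} ranges over $w\in\inte\sett$ while the lemma is stated for all $w\in\sett$. For interior comparators the argument is complete as written; for a boundary comparator $w\in\partial\sett$ I would take interior points $w^{(n)}\to w$, apply the bound to each $w^{(n)}$ (noting that $\tilde w^{(n)}\coloneqq(1-\tfrac1T)w^{(n)}+\tfrac1T w^\star\to\tilde w$ and that $\ell_t$ is continuous on $\inte\sett$), and pass to the limit using upper semicontinuity of $\ell_t$ at $w$, which holds whenever $\ell_t$ is continuous on the closed set $\sett$ as in the applications of interest. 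Alternatively one just restricts the statement to $w\in\inte\sett$, which is all that Remark~\ref{rem:allcomparators} needs anyway.
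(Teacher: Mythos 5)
Your proposal is correct and is essentially the paper's own proof: the paper likewise sets $\tilde w = (1-\tfrac1T)w + \tfrac1T w^\star \in \cK_{1/T}$, applies \eqref{eq:losses2} round by round, and sums over $t\in[T]$. Your extra care about boundary comparators (the supremum in \eqref{eq:losses2} being over $\inte\sett$) is a reasonable refinement that the paper's one-line proof glosses over, but it does not change the argument.
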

	\begin{proof}[{\bf Proof}]
		Fix $w\in \cK$ and define $\tilde w = \frac{1}{  T}w^\star + \left(1 - \frac{1}{T}\right)w\in \cK_{1/T}$. Then, by \eqref{eq:losses2}, we have, for all $t\in[T]$,
		\begin{align}
			\ell_t(w_t)- \ell_t(w)\leq \ell_t(\tilde w_t) - \ell_t(\tilde w) + O(T^{-1/2}). 
		\end{align}
		Summing this over $t=1,\dots, T$ leads to the desired result.
	\end{proof}

	\section{Proofs of the Main Results}
	Next, we present the proof of Theorem~\ref{thm:main}.
	\subsection{Proof of Theorem~\ref{thm:main}}
	\label{sec:proof}
	\label{app:mainproof}
	\begin{proof}[{\bf Proof}]
		The proof consists of three parts: I) First, we show that \algo{} keeps the Newton decrements $\lambda(w_t,\Phi_t), t\geq 1,$ small---this is the main invariant of $\algo$; II) Then, we bound the regret of \algo{} using this invariant and the results of Lemma~\ref{lem:Newtonupdates}; III) Finally, we bound the runtime of \algo{}.
		
		\textbf{Bounding the Newton decrements.} We will show that the Newton decrements satisfy 
		\begin{align}
			\lambda(w_s, \Phi_s) \leq \alpha\coloneqq \min\left\{\frac{1}{1000M_\Phi}, 1000\veps\right\},\label{eq:decrement1}
		\end{align}
		for all $s\geq 1$.
		We will show \eqref{eq:decrement1} by induction over $t\geq 1$. 
		
		\textbf{Base case.} The base case follows by the facts that $w_1 \in \argmin_{w\in \cK}\Phi(w)$, $\Phi_1\equiv\Phi$ and that the Newton decrement is zero at the minimizer.
		
		\textbf{Induction step.}
		Suppose that \eqref{eq:decrement1} holds with $s=t-1$ for some $t\ge 1$. We will show that it holds for $s=t$.
		First, note that by the update rule of landmark (see Lines \ref{eq:lineeight} and \ref{line:endmark2} of Alg.~\ref{algorithmbox}), we have that 
		\begin{align}
			\|w_{t-1} - u_{t-1}\|_{H_{t-1}} \leq \frac{1}{41 M_\Phi},\nn 
		\end{align}
		where $H_{t-1} =\cO^{\hess}_{\alpha}(u_{t-1})$. Thus, by the choice of $\alpha$ in the theorem's statement, we have
		\begin{align}
			\|u_{t-1} - w_{t-1}\|_{\nabla^{2}\Phi(u_{t-1})} \leq \frac{1}{40M_\Phi}. \label{eq:precond2}
		\end{align}
		This, combined with the fact that \eqref{eq:decrement1} holds with $s=t-1$ (the induction hypothesis) implies that the conditions of Lemma~\ref{lem:Newtonupdates} are satisfied. This in turn implies  
		\begin{align*}
			\lambda({w_{t}}, \Phi_t)  \stackrel{(a)}{=} \lambda({w_t^{\mfinal}}, \Phi_t) \leq \left(\frac{15}{16}\right)^{\mfinal} \lambda(w^1_{t}, \Phi_{t}) + 500\veps 
			\leq \frac{50}{M_\Phi }\left(\frac{15}{16}\right)^{\mfinal}+ 500\veps
			\stackrel{(b)}{\leq} \alpha,
		\end{align*}
		where $(a)$ follows by the fact that $w_{t}=w_{t}^{\mfinal}$ (by definition; see Algorithm \ref{algorithmbox}) and $(b)$ follows by the choice of $\mfinal$ in the theorem's statement.
		This shows that \eqref{eq:decrement1} holds for $s=t$ and concludes the induction.
		\paragraph{Bounding the regret.} To bound the regret of \algo{}, we make use of the FTRL iterates $\{w_t^\star\}$, which are given by $w_t^\star \in \argmin_{w\in \sett} \Phi_t(w)$: By Lemma~\ref{lem:Newtonupdates}, we have that for all $t\in[T]$,
		\begin{align}
			\|{w_t} - w^\star_t\|_{{\nabla^2 \Phi({w^\star_t})}} = \|{w_t^{m_{\final}}} - w^\star_t\|_{{\nabla^2 \Phi({w^\star_t})}} \leq 
			\frac{1}{49M_\Phi}\left(\frac{15}{16}\right)^{m_{\final}} + 240\veps = O(\veps), \label{eq:thisone}
		\end{align}
		where the last inequality follows by the choice of $m_{\final} = \Theta(\log (1/(\veps M_\Phi)))$ in the theorem's statement. Using this and H\"older's inequality, we now bound the sum of linearized losses of the algorithm in terms of the sum of linearized losses with respect to $\{w_t^\star\}$:
		\begin{align}
			\sum_{t=1}^T \langle w_{t}, g_t\rangle &\leq 
			\sum_{t=1}^T \langle w^\star_{t}, g_t\rangle + \sum_{t=1}^T \|w^\star_t - w_t\|_{{\nabla^2 \Phi({w^\star_t})}} \cdot \|g_t\|_{{\nabla^2 \Phi({w^\star_t})}^{-1}}.\label{eq:reg1}
		\end{align}
		Now, by~\eqref{eq:extracondition2} in Lemma~\ref{lem:Newtonupdates} (which holds due to \eqref{eq:precond2} and \eqref{eq:decrement1} with $s=t-1$ as we showed in the prequel), we have $   \frac{1}{2}\nabla^2 \Phi(w_{t}) \preccurlyeq \nabla^2 \Phi(w^\star_{t})$, which implies that $
		\|g_t\|_{\nabla^{-2} \Phi({w^\star_t})} \leq 2\|g_t\|_{\nabla^{-2} \Phi({w_{t}})}.$
		Combining this with \eqref{eq:thisone} and \eqref{eq:reg1}, we get that
		\begin{align}
			\sum_{t=1}^T \langle w_{t}, g_t\rangle &\leq 
			\sum_{t=1}^T \langle w^\star_{t}, g_t\rangle + O(\veps)\sum_{t=1}^T \|g_t\|_{\nabla^{-2}\Phi(w_{t})}.\label{eq:EFTRLtoFTRL}
		\end{align}
		Now fix $w\in \sett$. Subtracting $\sum_{t=1}^T \langle w, g_t\rangle$ from both sides of \eqref{eq:EFTRLtoFTRL} implies the following bound the regret of \algo:
		\begin{align}
			\reg^{\algo}_T(w) & \leq  \sum_{t=1}^T \langle w^\star_{t}, g_t\rangle - \sum_{t=1}^T \langle w, g_t\rangle
			+ O(\veps)\sum_{t=1}^T \|g_t\|_{\nabla^{-2}\Phi(w_{t})} ,\nn \\  &  \leq \frac{1}{\eta}\Phi(w)\ + \eta \sum_{t=1}^T \|g_t\|_{\nabla^{-2}\Phi(w_t)}^2 + O(\veps)\sum_{t=1}^T \|g_t\|_{\nabla^{-2}\Phi(w_{t})},\nn %
		\end{align}
		where the last inequality follows by the regret bound of FTRL (see e.g.~\cite{mhammedi2022quasi}).
		
		\textbf{Bounding the run-time.}
		Note that \algo{} updates the landmark points on the rounds where $\|u_{t} - w_t\|_{H_t} > 1/(41M_\Phi)$. Now, by~\eqref{eq:extracondition} in Lemma~\ref{lem:Newtonupdates} (which holds due to \eqref{eq:precond2} and \eqref{eq:decrement1} with $s=t-1$ as we showed in the prequel), we have
		\begin{align*}
			\Big|\|u_{t} - {w_{t}}\|_{H_t} - \|u_{t} - w_{t-1}\|_{H_t}\Big| &= 
			\Big|\|u_{t} - {w^{m_{\final}}_t}\|_{H_t} - \|u_{t} - w_{t-1}\|_{H_t}\Big|, \nn \\ &\leq 2\eta\|g_{t-1}\|_{\nabla^{-2}\Phi(w_{t-1})} + \frac{1}{40M_\Phi}\left(\frac{15}{16}\right)^{m_{\final}} + 500\veps + 2\alpha\\
			&\leq 2\eta\|g_{t-1}\|_{\nabla^{-2}\Phi(w_{t-1})} + O(\veps),
		\end{align*}
		where the last inequality follows by the choice of $m_{\final}$ in the theorem's statement.
		Hence, the quantity $\|u_{t} - w_t\|_{H_t}$ increases each time by at most $2\eta\|g_{t-1}\|_{\nabla^{-2}\Phi(w_{t-1})} + O(\veps)$. Therefore, the number of times that the landmark $u_t$ changes is bounded by
		\begin{align*}
			O\left(\frac{\sum_{t=1}^T  (2\eta\|g_{t}\|_{\nabla^{-2}\Phi(w_{t})} + O(\veps))}{1/(41 M_\Phi)}\right) = O\left(M_\Phi T\veps + M_\Phi \sum_{t=1}^T  \eta\|g_{t}\|_{\nabla^{-2}\Phi(w_{t})}\right).
		\end{align*}
		Thus, the overall computational cost of recalculating the Hessians and their inverses at the landmark iterates is bounded by 
		\begin{align*}
			\cC^{\hess}_{\alpha} \cdot \left(M_\Phi T\veps + M_\Phi \sum_{t=1}^T  \eta\|g_{t}\|_{\nabla^{-2}\Phi(w_{t})}\right).
		\end{align*}
		where the multiplicative cost $\cC^{\hess}_{\alpha}$ reflects the fact that the instance of \algo{} in the theorem's statement needs $1\pm \alpha$ accurate approximations of the Hessians and their inverses (in the sense of Definition \ref{def:hesoracle}) at the landmark iterates. %
		Moreover, \algo{} needs to compute an $\veps$-approximate gradient of $\Phi$ at every point ${w^{m}_t}$ for all $t \in[T]$ and $m \in [m_\final]$. Thus, the cost of computing the gradients is $\cC^{\grad}_{\veps}\cdot T\log  \frac{1}{\veps M_\Phi}$. Finally, the matrix-vector product $H_t^{-1}\nabla \Phi_t(w)$ in \algo{} costs $O(d^2)$ work, and so overall the computational cost is
		\begin{align*}
			O\left( (\cC^{\grad}_{\veps} + d^2)\cdot T\log \frac{1}{\veps M_\Phi} +  \cC^{\hess}_{\alpha}\cdot \left(M_\Phi T\veps + M_\Phi \sum_{t=1}^T  \eta\|g_{t}\|_{\nabla^{-2}\Phi(w_{t})}\right) \right).
		\end{align*}
	\end{proof}

	\subsection{Proof of Theorem~\ref{cor:localnorms}}
	\label{app:localnorms}
	\begin{proof}[{\bf Proof}]
		Note that without having any effect on the algorithm, we can add an arbitrary constant to the barrier $\Phi$. Thus, without loss of generality, we assume $\Phi(w^\star) = 0$, which implies $\Phi(w) \geq 0$, for all $w \in \sett$. We define the restricted comparator class 
		\[\wtilde \cK \coloneqq \{ w\in \cK : \Phi(w)\leq \Phi(w^\star)+\nu \log c\}.\]  
		By \cite[Corollary 5.3.3]{nesterov2018lectures} and the fact that $\Phi$ is an $(M_{\Phi},\nu)$-self-concordant barrier for $\cK$, we have that 
		\begin{align}\cK_c \subseteq \wtilde\cK,
			\label{eq:regbound}
		\end{align} and so it suffices to bound the regret against comparators in $\wtilde \cK$. Fix $\tilde w\in \wtilde \cK$. Under the assumptions of the theorem, the preconditions of Theorem \ref{thm:main} are satisfied and so we have,
		\begin{align*}
			\sum_{t=1}^T (\ell_t(w_t)- \ell_t(\tilde w)) &\lesssim \frac{1}{\eta}\Phi(\tilde w)\ + \eta \sum_{t=1}^T \|g_t\|_{\nabla^{-2}\Phi(w_t)}^2 + \veps \sum_{t=1}^T \|g_t\|_{\nabla^{-2}\Phi(w_t)}, \nn \\
			&= \frac{1}{\eta} \nu \log c + \eta b^2 T + \veps Tb,  \quad \text{(since $\tilde w\in \wtilde \cK$ and $\|g_t\|_{\nabla^{-2}\Phi(w_t)}\leq b$)}\\
			&= 2 b \sqrt{\nu T \log c} + b \sqrt{\nu T},
		\end{align*}
		where in the last step we used the choices of $\eta$ and $\veps$ in \eqref{eq:choice}. Combining this with \eqref{eq:regbound} implies the desired regret bound. The bound on the computational complexity follows immediately from Theorem \ref{thm:main}, the fact that $\|g_t\|_{\nabla^{-2}\Phi(\w_t)}\leq b$, and the choices of $\eta$ and $\veps$ in \eqref{eq:choice}. 
	\end{proof}
	
	\subsection{Proof of Theorem~\ref{cor:euclideannorm}}
	\label{app:euclideannorm}
	\begin{proof}[{\bf Proof}]
		Similar to the proof of Theorem~\ref{cor:localnorms}, and without loss of generality, we assume that $\Psi$ is zero at its minimum, i.e.~$\Psi(w^\star) = 0$. We define the restricted comparator class 
		\begin{align}
			\wtilde \cK \coloneqq \{ w\in \cK : \Psi(w)\leq \Psi(w^\star)+\nu \log T\}.\label{eq:barrierupperbound}
		\end{align}	  
		By \cite[Corollary 5.3.3]{nesterov2018lectures} and the fact that $\Psi$ is an $(M_{\Psi},\nu)$-self-concordant barrier for $\cK$, we have that 
		\begin{align}\cK_{1/T} \subseteq \wtilde\cK.
			\label{eq:regbound2}
		\end{align}
		On the other hand, by Lemma \ref{lem:scaledcomparator} we have that 		\begin{align}
			\sup_{w\in \inte \cK} 	\sum_{t=1}^T (\ell_t(w_t)-\ell_t(w)) \leq 		\sup_{\tilde w\in \inte \cK} \sum_{t=1}^T (\ell_t(w_t)-\ell_t(\tilde w) + O(\sqrt{T}).\nn 
		\end{align}
		Combining this with \eqref{eq:regbound2} implies that it suffices to bound the regret against comparators in $\wtilde \cK$. Fix $\tilde w\in \wtilde \cK$. Note that since $\Phi$ is equal to $\Psi$ plus a quadratic, $\Phi$ is also a self-concordant function with constant $M_{\Phi}=M_{\Psi}$ \citep{nesterov2018lectures}. Thus, under the assumptions of the theorem the preconditions of Theorem~\ref{thm:main} are satisfied and so we have,
		\begin{align}
			\sum_{t=1}^T (\ell_t(w_t)- \ell_t(\tilde w)) &\lesssim \frac{1}{\eta}\Phi(\tilde w)\ + \eta \sum_{t=1}^T \|g_t\|_{\nabla^{-2}\Phi(w_t)}^2 + \veps \sum_{t=1}^T \|g_t\|_{\nabla^{-2}\Phi(w_t)}. \label{eq:thiine}
		\end{align}
		Now, by the choice of $\Phi$, we have that 
		\begin{align}
			\|g_t\|_{\nabla^{-2} \Phi(w_t)} \leq R\|g_t\|/\sqrt{\nu }\leq RG/\sqrt{\nu}. \nn %
		\end{align}
		Moreover, from the condition that $\sett \subseteq \cB(R)$, \eqref{eq:barrierupperbound} and the fact that $\Psi(w^\star) = 0$, we have for all $w\in \wtilde\cK$:
		\begin{align*}
			\Phi(w) \leq \nu\log T + \frac{\nu}{2}.
		\end{align*}
		Plugging this into \eqref{eq:thiine} and using that $\tilde w\in \cK$, we get 
		\begin{align}
			\sum_{t=1}^T (\ell_t(w_t)- \ell_t(\tilde w)) &= \frac{1}{\eta} \nu \log T + \frac{1}{2\eta} + \eta  \frac{R^2 G^2}{\nu} T + \veps \frac{T  RG}{\sqrt{\nu}}, \nn  \\ &= 2 R G \sqrt{T \log T} + \frac{5}{2}R G \sqrt{T},\nn 
		\end{align}
		where in the last step we used the choices of $\eta$ and $\veps$ in \eqref{eq:choice2}. Combining this with \eqref{eq:regbound2} implies the desired regret bound. The bound on the computational complexity follows from the computational complexity in Theorem \ref{thm:main} and the fact a gradient Oracle $\cO_{\veps}^{\grad}(\Phi)$ [resp.~Hessian Oracle $\cO_{\alpha}^{\grad}(\Psi)$] for $\Phi(\cdot) = \Psi(\cdot) +  \frac{\nu}{2 R^2} \|\cdot\|^2$ can be implemented with one call to $\cO_{\veps}^\grad(\Psi)$ [resp.~$\cO_{\alpha}^\hess(\Psi)$] plus $d$ arithmetic operations.
	\end{proof}
	
	\section{Proofs of the Technical Lemmas}
	\subsection{Proof of Lemma \ref{lem:Newtondec}}
	\label{sec:Newtondecproof}
	\begin{proof}[{\bf Proof}]
		Throughout, we let $h$ is the Newton step based on the exact gradient $\nabla \Phi(y)$:
		\begin{align*}
			&h = -H^{-1}\nabla \Phi(y).
		\end{align*} 
		Recall that $\tilde y^+$ and $y^+$ from the lemma's statement satisfy 
		\begin{align}
			\tilde y^+ = y + h \quad \text{and} \quad y^+ = y - H^{-1} \what \nabla_y. \nn 
		\end{align}
		\textbf{Bounding the Newton decrement at $\tilde y^+$.} First, we bound the Newton decrement at $\tilde y^+$.
		By definition, the square of the Newton decrement at $\tilde y^+=y + h$ is 
		\begin{align*}
			\lambda(\tilde y^+,\Phi) = \nabla \Phi(y + h)^\top \nabla^{-2} \Phi(y + h)\nabla \Phi(y + h).
		\end{align*}
		Now for the vector $z$ defined below, we define the function $F$ as
		\begin{align}
			z \triangleq \nabla^{-2} \Phi(y+h) \nabla \Phi(y+h)\quad \text{and}\quad 
			F(y) \coloneqq \nabla \Phi(y)^\top z. \label{eq:defz}
		\end{align}
		The partial derivative of $F$ in direction $h$ is given by
		\begin{align*}
			DF(y)[h] & = -h^\top \nabla^2 \Phi(y) z,\nn \\ &= -\nabla \Phi(y)^\top H^{-1} \nabla^2 \Phi(y) z, \\
			& = -\nabla \Phi(y)^\top H^{-1/2} H^{-1/2} \nabla^2 \Phi(y) H^{-1/2} H^{1/2} z ,\\
			& =  -\nabla \Phi(y)^\top H^{-1/2} (H^{-1/2} \nabla^2 \Phi(y) H^{-1/2} - I) H^{1/2} z - \nabla \Phi(y)^\top z.\numberthis\label{eq:firstderivation}
		\end{align*}
		Now, by \eqref{eq:spectralbound}, we have 
		\begin{align*}
			\|H^{-1/2} \nabla^2 \Phi(y) H^{-1/2} - I\| \leq \frac{\alpha}{1-\alpha}.
		\end{align*}
		Thus, the first term on the right-hand side of \eqref{eq:firstderivation} can be bounded as
		\begin{align}
			& 	\nabla \Phi(y)^\top H^{-1/2} (H^{-1/2} \nabla^2 \Phi(y) H^{-1/2} - I) H^{1/2} z\nn \\ & \leq \frac{\alpha}{1-\alpha}\|\nabla \Phi(y)^\top H^{-1/2}\|\|H^{1/2} z\|, \nn  \\
			& =\frac{\alpha}{1-\alpha}\|\nabla \Phi(y)\|_{H^{-1}}\|z\|_H, \nn \\
			& \leq \frac{\alpha(1+\alpha)}{(1-\alpha)^2}\|\nabla \Phi(y)\|_{\nabla^{-2} \Phi(y)}\|z\|_{\nabla^2 \Phi(y)}, \nn \\ %
			& = \frac{\alpha(1+\alpha)}{(1-\alpha)^2} \lambda(y, \Phi)\cdot \|z\|_{\nabla^2 \Phi(y)}.\nn 
		\end{align}
		Plugging this into \eqref{eq:firstderivation} and using the definition $z$ in \eqref{eq:defz}, we obtain
		\begin{align}
			\big|DF(y)[h] + F(y)\big|  \leq \frac{\alpha(1+\alpha)}{(1-\alpha)^2}\lambda(y, \Phi)\cdot \|z\|_{\nabla^2 \Phi(y)}.\nn\numberthis\label{eq:oco}
		\end{align}
		Now, let $\y(s) \coloneqq  y + sh$ and $F\circ \y(s)\coloneqq F(y(s))$. %
		With this, we have
		\begin{align*}
			(F\circ \y)'(s) -(F\circ \y)'(0) = h^\top(\nabla^2 \Phi(y(s)) - \nabla^2 \Phi(y(0)))z.\numberthis\label{eq:tmp1}
		\end{align*}
		On the other hand, by Lemma~\ref{lem:normhbound} and our assumption on $\lambda(y, \Phi)$, we have
		\begin{align*}
			\|\y(s) - y\|_{\nabla^2 \Phi(y)} = s\|h\|_{\nabla^2 \Phi(y)}\leq \frac{s}{1-\alpha}\|\nabla \Phi(y)\|_{\nabla^{-2} \Phi(y)}
			&\leq \frac{1}{1-\alpha}\lambda(y,\Phi), \numberthis\label{eq:distance2} \nn \\ &< \frac{1}{30M_\Phi}.\numberthis\label{eq:distancebound}
		\end{align*}
		Thus, by Lemma~\ref{lem:properties}, we have
		\begin{align*}
			(1-M_\Phi \|\y(s)-y\|_{\nabla^2 \Phi(y)}^2)^2\nabla^2 \Phi(y) \preccurlyeq \nabla^2 \Phi(y(s)) \preccurlyeq \frac{1}{(1-M_\Phi \|\y(s)-y\|_{\nabla^2 \Phi(y)})^2}\nabla^2 \Phi(y).
		\end{align*}
		This, together with \eqref{eq:distancebound} also implies that
		\begin{align}
			(1 - 3 M_\Phi \|\y(s)-y\|_{\nabla^2 \Phi(y)})\nabla^2 \Phi(y) \preccurlyeq \nabla^2 \Phi(y(s)) \preccurlyeq (1 + 3 M_\Phi \|\y(s)-y\|_{\nabla^2 \Phi(y)})\nabla^2 \Phi(y).\nn %
		\end{align}
		After rearranging, this becomes
		\begin{align}
			-3 M_\Phi \|\y(s)-y\|_{\nabla^2 \Phi(y)} \nabla^2 \Phi(y) \preccurlyeq \nabla^2 \Phi(y(s)) - \nabla^2 \Phi(y) \preccurlyeq 3 M_\Phi \|\y(s)-y\|_{\nabla^2 \Phi(y)} \nabla^2 \Phi(y). \nn%
		\end{align}
		Combining this with \eqref{eq:distance2} and the fact that $c \leq \frac{1}{4}$ gives
		\begin{align*}
			-4 M_\Phi \lambda(y,\Phi) \nabla^2 \Phi(y) \preccurlyeq \nabla^2 \Phi(y(s)) - \nabla^2 \Phi(y) \preccurlyeq 4 M_\Phi \lambda(y,\Phi) \nabla^2 \Phi(y).\numberthis\label{eq:tmp3}
		\end{align*}
		Finally, by Lemma~\ref{lem:generalizedcaucy} and \eqref{eq:tmp3}, we obtain the following bound on the right-hand side of \eqref{eq:tmp1}:
		\begin{align*}
			(F\circ \y)'(s) -(F\circ \y)'(0)
			\leq
			6 M_\Phi \lambda(y,\Phi) \|h\|_{\nabla^2 \Phi(y)} \|z\|_{\nabla^2 \Phi(y)}.
		\end{align*}
		Integrating this over $s$ gives
		\begin{align*}
			\nabla \Phi(y + h)^\top z &= (F\circ \y)(1)\\
			&=(F\circ \y)(0) + (F\circ \y)'(0) + \int_{0}^1 \big((F\circ \y)'(s) -(F\circ \y)'(0)\big) ds
			\\
			& \leq \nabla \Phi(y)^\top z + DF(y)[z] + 6 M_\Phi \lambda(y,\Phi) \|h\|_{\nabla^2 \Phi(y)} \|z\|_{\nabla^2 \Phi(y)},\\
			& \leq \nabla \Phi(y)^\top z + DF(y)[z] + \frac{6 M_\Phi}{1-\alpha} \lambda(y,\Phi)^2  \|z\|_{\nabla^2 \Phi(y)},
			\numberthis\label{eq:firstpart}
		\end{align*}
		where the last inequality follows by \eqref{eq:distance2}. Now, note that from \eqref{eq:tmp3} (with $s=1$) and the assumption that $\lambda(w, \Phi) \leq 1/(40M_{\Phi})$, we have
		\begin{align*}
			\nabla^2 \Phi(y) \preccurlyeq \frac{10}{9}\nabla^2 \Phi(y+h).
		\end{align*}
		This implies
		\begin{align*}
			\|z\|_{\nabla^{2} \Phi(y)} \leq \frac{10}{9}\|z\|_{\nabla^{2}\Phi(y+h)} = \frac{10}{9}\lambda(y+h, \Phi).
		\end{align*}
		Plugging this into \eqref{eq:firstpart} and using \eqref{eq:oco}, we get 
		\begin{align*}
			\nabla \Phi(y + h)^\top z& \leq \big|\nabla \Phi(y)^\top z + DF(y)[h]\big| + \frac{20 M_\Phi}{3(1-\alpha)} \lambda(y,\Phi)^2 \lambda(y+h,\Phi)\\
			& \leq \frac{\alpha(1+\alpha)}{(1-\alpha)^2}\lambda(y, \Phi)\|z\|_{\nabla^2 \Phi(y)} + \frac{20 M_\Phi }{3(1-\alpha)}\lambda(y,\Phi)^2 \lambda(y+h, \Phi),\\
			& \leq \frac{10\alpha(1+\alpha)}{9(1-\alpha)^2}\lambda(y, \Phi)\lambda(y+h, \Phi) + \frac{20 M_\Phi}{3(1-\alpha)}  \lambda(y,\Phi)^2 \lambda(y+h, \Phi).\nn
		\end{align*}
		Now, from the definition of $z$, we have
		\begin{align*}
			\nabla \Phi(y + h)^\top z = \lambda(y+h, \Phi)^2.
		\end{align*}
		Thus, since $\alpha < 1/4$, we finally get
		\begin{align*}
			\lambda(y+h, \Phi) \leq 9M_\Phi \lambda(y, \Phi)^2 + 2.5\alpha \lambda(y, \Phi).
		\end{align*}
		This proves the first part of the claim, i.e.~\eqref{eq:tmp1}. 
		\paragraph{Bounding the Newton decrement at $y^+$.} We now bound the Newton decrement at $y^+= y - H^{-1} \what \nabla_y$ in terms of that of $\tilde y^+= y +h$. Note that 
		\begin{align*}
			\tilde y^+ - y^+ = H^{-1}(\nabla \Phi(y) - \what \nabla_y).
		\end{align*}
		On the other hand, from \eqref{eq:tmp5} and~\eqref{eq:distancebound}, we have
		\begin{align}
			\nabla^2 \Phi(\tilde y^+) \preccurlyeq (1 + 3M_\Phi\|h\|)\nabla^2 \Phi(y) \preccurlyeq \frac{7}{4}\nabla^2 \Phi(y) \preccurlyeq \frac{7}{4}(1+\alpha)H,\label{eq:tmp9}
		\end{align}
		which implies that
		\begin{align*}
			\|{\nabla^2 \Phi(\tilde y^+)}^{1/2}H^{-1}{\nabla^2 \Phi(\tilde y^+)}^{1/2}\| \leq \frac{7}{4}(1+\alpha).
		\end{align*}
		Therefore,
		\begin{align*}
			&\|\tilde y^+ - y^+\|_{\nabla^2 \Phi(\tilde y^+)}^2\\
			&= (\nabla \Phi(y) - \what \nabla_y)^\top H^{-1} \nabla^2 \Phi(\tilde y^+) H^{-1} (\nabla \Phi( y) - \what \nabla_y),\\
			&=(\nabla \Phi(\tilde y^+) - \what \nabla_y)^\top {\nabla^{-1/2} \Phi(\tilde y^+)} \Big({\nabla^2 \Phi( \tilde y^+)}^{1/2} H^{-1}{\nabla^2 \Phi(\tilde y^+)}^{1/2}\Big)^2 {\nabla^{-1/2} \Phi(\tilde y^+)}(\nabla \Phi(y) - \what \nabla_y),\\
			&\leq \frac{49}{16}(1+\alpha)^2 (\nabla \Phi(y) - \what \nabla_y)^\top \nabla^{-2} \Phi(\tilde y^+)(\nabla \Phi(y) - \what \nabla_y),\\
			&=\frac{49}{16}(1+\alpha)^2\|\nabla \Phi(y) - \what \nabla_y\|_{{\nabla^{-2} \Phi(\tilde y^+)}}.
		\end{align*}
		Combining this with \eqref{eq:tmp9} and our assumption on $\what \nabla_y$ from \eqref{eq:gradbound} implies
		\begin{align*}
			\|\tilde y^+ - y^+\|_{\nabla^2 \Phi(\tilde y^+)} \leq \frac{7}{2}(1+\alpha)\|\nabla \Phi(y) - \what \nabla_y\|_{{\nabla^{-2} \Phi(y)}} \leq 5(1+\alpha)\veps.\numberthis\label{eq:distancebound2}
		\end{align*}
		Thus, by Lemma~\ref{lem:properties}, we have
		\begin{align*}
			\Big((1-5(1+\alpha)\veps M_\Phi)^2 - 1\Big) \nabla^2 \Phi(y^+) \preccurlyeq \nabla^2 \Phi(\tilde y^+) - \nabla^2 \Phi(y^+) \leq \left(\frac{1}{(1-5(1+\alpha)\veps M_\Phi)^2} - 1\right)\nabla^2 \Phi(y^+).
		\end{align*}
		Since $\veps < 1/(40M_\Phi)$, we get 
		\begin{align}
			-20(1+\alpha)\veps \nabla^2 \Phi(y^+) \preccurlyeq  \nabla^2 \Phi(\tilde y^+) - \nabla^2 \Phi(y^+) \preccurlyeq  20(1+\alpha)\veps \nabla^2 \Phi(y^+).\label{eq:Hessiandiff}
		\end{align}
		Now, by Lemma~\ref{lem:inter0} instantiated with $x=\tilde y^+$ and $w =y^+$, we have
		\begin{align*}
			\sqrt{(\nabla \Phi(\tilde y^+) - \nabla \Phi(y^+))^\top \nabla^{-2} \Phi(\tilde y^+) (\nabla \Phi(\tilde y^+) - \nabla \Phi(y^+))} &\leq \frac{\|y^+ - \tilde y^+\|_{\nabla^2 \Phi(\tilde y^+)}}{(1-M_\Phi \|y^+ - \tilde y^+\|_{\nabla^2 \Phi(\tilde y^+)})},\\
			&\leq 10(1+\alpha)\veps,\numberthis\label{eq:this}
		\end{align*}
		where in the last inequality we used \eqref{eq:distancebound2} and the fact that $\veps \leq 1/(40M_{\Phi})$.
		
		Using the triangle inequality, we can bound the Newton decrement at $y^+$ as
		\begin{align*}
			\lambda(y^+, \Phi) &\leq
			\sqrt{(\nabla \Phi(\tilde y^+) - \nabla \Phi(y^+))^\top {\nabla^{-2} \Phi(y^+)}(\nabla \Phi(\tilde y^+) - \nabla \Phi(y^+))} \\
			&\quad + \sqrt{\nabla \Phi(\tilde y^+)^\top {\nabla^{-2} \Phi(y^+)} \nabla \Phi(\tilde y^+)},\\
			&\leq 2 \sqrt{(\nabla \Phi(\tilde y^+) - \nabla \Phi(y^+))^\top {\nabla^{-2} \Phi(\tilde y^+)}(\nabla \Phi(\tilde y^+) - \nabla \Phi(y^+))}\\
			&\quad + \sqrt{\nabla \Phi(\tilde y^+)^\top {\nabla^{-2} \Phi(y^+)} \nabla \Phi(\tilde y^+)}, \quad \text{(by \eqref{eq:Hessiandiff} and $\veps\leq {1}/({40 M_\Phi})$)}  \\
			&\leq 20(1+\alpha)\veps + (1+20(1+\alpha)\veps) \cdot \lambda(\tilde y^+, \Phi),
		\end{align*}
		where the last inequality follows by \eqref{eq:Hessiandiff} and \eqref{eq:this}. This completes the proof.
	\end{proof}

	\subsection{Proof of Lemma \ref{lem:Newtonupdates}}
	\label{sec:Newtonupdatesproof}
	\begin{proof}[{\bf Proof.}]
		By definition of $(w_t^m)$ in Algorithm \ref{algorithmbox}, we have ${w^1_t} = w_{t-1}$ and $w_t = {w_t^{m_{\final}}}$. We show properties~\eqref{eq:Newtondecreaserate}, \eqref{eq:closetolandmark},~\eqref{eq:closetooptimum}, and~\ref{eq:extracondition} using induction over $m =1,\dots, m_\final$. 
		\paragraph{Base case.} We start with the base case; $m = 1$. Note that from the assumption in \eqref{eq:empty} and definition of $w^1_t$, we have
		\begin{align}
			\|{w^1_t} - u_{t-1}\|_{\nabla^2\Phi(u_{t-1})} \leq 1/(40M_\phi).\label{eq:basedistance}
		\end{align}
		Now, by definition of the Oracle $\cO^{\hess}_{\alpha}$ and the fact that $H_{t-1}= \cO^{\hess}_{\alpha}(u_{t-1})$ (see Algorithm \ref{algorithmbox}) with $\alpha=0.001$, we have 
		\begin{align}
			(1-0.001)\nabla^2 \Phi(u_{t-1}) \preccurlyeq H_{t-1} \preccurlyeq (1+0.001)\nabla^2 \Phi(u_{t-1}).\label{eq:HapproxHessian}
		\end{align}
		Combining this with \eqref{eq:basedistance} implies property~\eqref{eq:closetolandmark} for the base case. Furthermore, since $w^1_{t}=w_{t-1}$ (by definition), \eqref{eq:extracondition} follows trivially for the base case.
		
		Now, using that $\Phi_t(w) = \Phi_{t-1}(w) +  \eta g_{t-1}^\top w,$ we have 
		\begin{align*}
			\lambda({w^1_t}, \Phi_t)^2 &=\lambda({w_{t-1}}, \Phi_t)^2\\
			&= (\nabla \Phi_{t-1}(w_{t-1}) + \eta{g_{t-1}})^\top \nabla^{-2} \Phi(w_{t-1}) (\nabla \Phi_{t-1}(w_{t-1}) + \eta{g_{t-1}})\\
			&\leq 2\nabla \Phi_{t-1}(w_{t-1})^\top \nabla^{-2} \Phi(w_{t-1}) \nabla \Phi_{t-1}(w_{t-1}) + 2\eta^2{g_{t-1}}^\top \nabla^{-2} \Phi(w_{t-1}) g_{t-1}\\
			&= 2\lambda({w_{t-1}}, \Phi_{t-1})^2 + 2\eta^2{g_{t-1}}^\top \nabla^{-2} \Phi(w_{t-1}) g_{t-1} \numberthis\label{eq:root2} \\
			&\leq 2{\alpha}^2 + 2\eta^2 b^2 \leq 1/(2500M_\Phi^2),\numberthis\label{eq:zerobound}
		\end{align*}
		where the last inequality follows by \eqref{eq:empty} and the fact that $\|g_{t-1}\|_{\nabla^{-2} \Phi(w_{t-1})} \leq b$. This shows property~\eqref{eq:Newtondecreaserate} for the base case. Thus, by Lemma~\ref{lem:properties}, we have, for ${w^\star_t} \in \argmin_{w\in \sett} \Phi_t(w)$, 
		\begin{align*}
			\|{w^1_t} - {w^\star_t}\|_{{\nabla^2 \Phi({w^1_t})}}	=	\|{w^1_t} - {w^\star_t}\|_{{\nabla^2 \Phi_t({w^1_t})}}&\leq \lambda({w^1_t}, \Phi_t)/(1-M_\Phi \lambda({w^1_t}, \Phi_t)) \leq
			\frac{1}{49M_\Phi}.\numberthis\label{eq:tmp33}
		\end{align*}
		Now, combining \eqref{eq:HapproxHessian} with the fact that $\|u_{t-1} - w^1_t\|_{\nabla^2 \Phi(u_{t-1})} = \|u_{t-1} - w_{t-1}\|_{\nabla^2 \Phi(u_{t-1})} \leq 1/(40M_\Phi)$ (see \eqref{eq:empty}) and Lemma~\ref{lem:Hessians}, we obtain%
		\begin{align}
			\left(\frac{31}{32}\right)^2 H_{t-1} \preccurlyeq \nabla^2 \Phi({w^1_t}) \preccurlyeq \left(\frac{32}{31}\right)^2 H_{t-1},\label{eq:zeroHbound}
		\end{align}
		Plugging \eqref{eq:zeroHbound} into \eqref{eq:tmp33}, we get
		\begin{align}
			\|{w^1_t} - {w^\star_t}\|_{{H_{t-1}}} \leq 1/(40M_\Phi).\label{eq:tri}
		\end{align}
		Now, by the triangle inequality
		\begin{align*}
			\|u_{t-1} - {w^\star_t}\|_{{H_{t-1}}} &\leq \|u_{t-1} - {w^1_t}\|_{{H_{t-1}}} + \|{w^1_t} - {w^\star_t}\|_{H_{t-1}}\\
			&\leq 1/(20 \Phi_M),\numberthis\label{eq:tmp4}
		\end{align*}
		where in the last inequality we used \eqref{eq:empty} and \eqref{eq:tri}. Combining \eqref{eq:tmp4} with \eqref{eq:HapproxHessian} and Lemma~\ref{lem:properties}, we get
		\begin{align}
			(4/5)\nabla^2 \Phi({w^\star_t}) \preccurlyeq H_{t-1} \preccurlyeq (5/4)\nabla^2 \Phi({w^\star_t}).\label{eq:Happrox}
		\end{align}
		Combining Equations~\eqref{eq:Happrox} and~\eqref{eq:zeroHbound} implies property~\eqref{eq:extracondition2} for the base of Induction.
		Furthermore, note that from Lemma~\ref{lem:properties}:
		\begin{align*}
			\|{w^{1}_t} - {w^\star_t}\|_{{\nabla^2 \Phi({w^\star_t})}} \leq \frac{50}{49}\lambda({w_t^{1}}, \Phi_t) \leq \frac{1}{49M_\Phi},
		\end{align*}
		which shows property~\eqref{eq:closetooptimum} for the base case.
		\paragraph{Induction step.}
		Now, assume that properties~\eqref{eq:Newtondecreaserate},~\eqref{eq:closetolandmark},~\eqref{eq:closetooptimum}, and~\eqref{eq:extracondition} hold for $m\geq 1$. We will show that these properties holds for $m+1$. From the hypothesis of induction, we have 
		\begin{align*}
			\|{w^m_t} - u_{t-1}\|_{H_{t-1}} \leq \frac{1}{12 M_\Phi},
		\end{align*}
		which combined with \eqref{eq:HapproxHessian} and Lemma~\ref{lem:properties} implies
		\begin{align}
			0.84\nabla^2 \Phi({w^m_t}) \preccurlyeq H_{t-1} \preccurlyeq 1.2\nabla^2 \Phi({w^m_t}).\label{eq:Happrox3}
		\end{align}
		Thus, by Lemma~\ref{lem:Newtondec} (instantiated with $c = 1/5$) and the fact that $\lambda({w^m_t}, \Phi_t) \leq 1/(40M_\Phi)$ (by the induction hypothesis), we get that for ${\tilde w_t}^{m+1} \coloneqq {w^m_t}  - H_{t-1}^{-1}\nabla \Phi_t({w^m_t})$:
		\begin{align}
			\lambda({\tilde w_t}^{m+1}, \Phi_t) &\leq 9M_\Phi \lambda({w^{m}_t}, \Phi_t)^2 + 2.5c\lambda({w^{m}_t}, \Phi_t)
			\leq (7/8)\lambda({w^{m}_t}, \Phi_t).\nn %
		\end{align}
		Again, by Lemma~\ref{lem:Newtondec} with $c=1/5$, we have
		\begin{align*}
			\lambda({w^{m+1}_t}, \Phi_t) &\leq 20(1+c)\veps + (1+20(1+c)\veps) \lambda({\tilde w_t}^{m+1}, \Phi) \\
			&\leq 25\veps + \left(\frac{15}{16}\right)\lambda({w^{m}_t}, \Phi_t)
			\leq 1/(50 M_\Phi).\numberthis\label{eq:stepinduction}
		\end{align*}
		By the induction hypothesis, we also have that $\lambda(w_t^m, \Phi_t) \leq \left(\frac{15}{16}\right)^{m-1} \lambda(w^1_t, \Phi_t) + 500\veps$. Combining this with \eqref{eq:stepinduction}, we get
		\begin{align}
			\lambda(w_t^{m+1}, \Phi_t) \leq \left(\frac{15}{16}\right)^{m} \lambda(w^1_t, \Phi_t) + 500\veps.\label{eq:stepinduction1}
		\end{align}
		This shows that \eqref{eq:Newtondecreaserate} holds with $m$ replaced by $m+1$. 
		
		Next, we show that \eqref{eq:closetolandmark} holds with $m$ replaced by $m+1$. Combining \eqref{eq:stepinduction} with Lemma~\ref{lem:properties} implies
		\begin{align*}
			\|{w^{m+1}_t} - {w^\star_t}\|_{{\nabla^2 \Phi({w^\star_t})}}&\leq \lambda({w^{m+1}_t}, \Phi_t)/(1-M_\Phi \lambda({w^{m+1}_t}, \Phi_t)) \leq
			1/(49M_\Phi). \nn %
		\end{align*}
		This, together with \eqref{eq:Happrox} gives
		\begin{align}
			\|{w^{m+1}_t} - {w^\star_t}\|_{H_{t-1}} \leq 1/(32M_\Phi).\label{eq:tmp5}
		\end{align}
		Combining \eqref{eq:tmp4} with \eqref{eq:tmp5} gives:
		\begin{align}
			\|{w^{m+1}_t} - u_{t-1}\|_{H_{t-1}} \leq \frac{1}{12 M_\Phi},\nn %
		\end{align}
		which proves that \eqref{eq:closetolandmark} holds with $m$ replaced by $m+1$. %
		Next, we show that \eqref{eq:closetooptimum} holds with $m$ replaced by $m+1$. By \eqref{eq:stepinduction1} and Lemma~\ref{lem:properties}, we have
		\begin{align*}
			\|{w^{m+1}_t} - {w^\star_t}\|_{{\nabla^2 \Phi({w^\star_t})}} &\leq \frac{50}{49}\left(\frac{15}{16}\right)^{m}\lambda({w_t^{1}}, \Phi_t) + 240\veps,\\
			&\leq \frac{1}{49M_\Phi}\left(\frac{15}{16}\right)^{m} + 240\veps
			\leq \frac{1}{20M_\Phi},\numberthis\label{eq:above}
		\end{align*}
		where in the last inequality we used \eqref{eq:zerobound} and the bound on $\veps$ in the lemma's statement. This shows that \eqref{eq:closetooptimum} holds with $m$ replaced by $m+1$. 
		
		Next, we show that \eqref{eq:extracondition} holds with $m$ replaced by $m+1$. By plugging \eqref{eq:Happrox} into \eqref{eq:above}, we get
		\begin{align}
			\|{w^{m+1}_t} - {w^\star_t}\|_{H_{t-1}} \leq \frac{1}{40M_\Phi}\left(\frac{15}{16}\right)^{m} + 300\veps.\label{eq:tmp7}
		\end{align} 
		On the other hand, by \eqref{eq:root2}, we have \begin{align*}
			\lambda({w^1_t}, \Phi_t) &\leq 
			\sqrt{\nabla \Phi_{t-1}(w_{t-1})^\top (\nabla^2 \Phi(w_{t-1}))^{-1} \nabla \Phi_{t-1}(w_{t-1})}
			+ \sqrt{\eta^2 {g_{t-1}}^\top (\nabla^2 \Phi(w_{t-1}))^{-1} g_{t-1}}\\
			&= \lambda(w_{t-1}, \Phi_{t-1}) + \eta \|g_{t-1}\|_{\nabla^{-2}\Phi(w_{t-1})}.
		\end{align*}
		Plugging this into \eqref{eq:tmp33} and using \eqref{eq:zerobound}, we get
		\begin{align}
			\|{w^1_t} - {w^\star_t}\|_{{\nabla^2 \Phi({w^1_t})}} &\leq \frac{50}{49} (\lambda(w_{t-1}, \Phi_{t-1}) + \eta\|g_{t-1}\|_{\nabla^{-2}\Phi(w_{t-1})}),\nn \\
			&\leq \frac{50}{49}(\alpha + \eta\|g_{t-1}\|_{\nabla^{-2}\Phi(w_{t-1})}), \label{eq:mile}
		\end{align}
		where the last inequality follows by \eqref{eq:empty}. Combining \eqref{eq:mile} with \eqref{eq:Happrox3} (instantiated with $m=1$), 
		\begin{align}
			\|{w^1_t} - {w^\star_t}\|_{H_{t-1}} \leq 2(\alpha +  \eta\|g_{t-1}\|_{\nabla^{-2}\Phi(w_{t-1})}).\label{eq:tmp8}
		\end{align}
		Now, by \eqref{eq:tmp7}, \eqref{eq:tmp8}, and the triangle inequality, we have
		\begin{align*}
			\|{w^1_t} - {w^{m+1}_t}\|_{H_{t-1}} \leq 2\eta\|g_{t-1}\|_{\nabla^{-2}\Phi(w_{t-1})} + \frac{1}{40M_\Phi}\left(\frac{15}{16}\right)^{m} + 500\veps + 2\alpha.
		\end{align*}
		Via another triangle inequality, we get
		\begin{align*}
			\Big|\|u_{t-1} - {w^{m+1}_t}\|_{H_{t-1}} - \|u_{t-1} - w_{t-1}\|_{H_{t-1}}\Big| &  \leq 2\eta\|g_{t-1}\|_{\nabla^{-2}\Phi(w_{t-1})} + \frac{1}{40M_\Phi}\left(\frac{15}{16}\right)^{m} \nn \\ & \quad + 500\veps + 2\alpha.
		\end{align*}
		This shows that \eqref{eq:extracondition} holds with $m$ replaced by $m+1$. Finally, combining \eqref{eq:Happrox} with \eqref{eq:Happrox3} implies
		\begin{align*}
			\frac{1}{2}\nabla^2 \Phi({w^{m}_t}) \preccurlyeq \nabla^2 \Phi(w^\star_t) \preccurlyeq 2\nabla^2 \Phi({w^{m}_t}),
		\end{align*}
		which completes the proof.
	\end{proof}
	\label{sec:proofs}
	
	\section{Helper Lemmas}
	\begin{lemma}[Bounding norm of the Newton step]\label{lem:normhbound}
		Let $y\in \cK$ and $H\in \reals^{d\times d}$ be such that $(1-c)\nabla^2 \Phi(y) \preccurlyeq H \preccurlyeq (1+c)\nabla^2 \Phi(y)$. Then, for $h \coloneqq -H^{-1} \nabla \Phi(y)$, we have
		\begin{align*}
			\|h\|_{\nabla^2 \Phi(y)} \leq \frac{1}{1-c}\|\nabla \Phi(y)\|_{{\nabla^{-2} \Phi(y)}}.
		\end{align*}
	\end{lemma}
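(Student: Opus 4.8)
The plan is to reduce the claim to the order-reversing property of matrix inversion on positive-definite matrices; no self-concordance is used, this is pure linear algebra. Write $g \coloneqq \nabla \Phi(y)$ and $M \coloneqq \nabla^2 \Phi(y)$ for brevity, so that $h = -H^{-1} g$ and the goal is to show $h^\top M h \leq (1-c)^{-2}\, g^\top M^{-1} g$, after which taking square roots gives the stated inequality $\|h\|_{\nabla^2\Phi(y)} \leq (1-c)^{-1}\|g\|_{\nabla^{-2}\Phi(y)}$.

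First I would extract what is needed from the \emph{left-hand} spectral bound $(1-c) M \preccurlyeq H$ only (the right-hand bound $H \preccurlyeq (1+c)M$ is not needed). This single inequality gives two facts at once: directly, $M \preccurlyeq (1-c)^{-1} H$; and, by inverting both sides of $(1-c)M \preccurlyeq H$ — which reverses the Loewner order since both matrices are positive definite — we get $H^{-1} \preccurlyeq (1-c)^{-1} M^{-1}$.

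Then I would chain these. Since $h = -H^{-1}g$, we have $h^\top H h = g^\top H^{-1} H H^{-1} g = g^\top H^{-1} g$, and therefore
\begin{align*}
\|h\|_{\nabla^2\Phi(y)}^2 = h^\top M h \;\leq\; \frac{1}{1-c}\, h^\top H h \;=\; \frac{1}{1-c}\, g^\top H^{-1} g \;\leq\; \frac{1}{(1-c)^2}\, g^\top M^{-1} g \;=\; \frac{1}{(1-c)^2}\,\|g\|_{\nabla^{-2}\Phi(y)}^2 .
\end{align*}
Taking square roots completes the argument.

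The only step that warrants any care is the order-reversal $(1-c)M \preccurlyeq H \Rightarrow H^{-1} \preccurlyeq (1-c)^{-1}M^{-1}$, which relies on operator monotonicity of $X \mapsto -X^{-1}$ on the positive-definite cone; everything else is a one-line substitution. I do not anticipate any genuine obstacle, so I would keep the proof to these few lines rather than expanding it further.
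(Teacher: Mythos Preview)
Your proof is correct. Both arguments are pure linear algebra and arrive at the same $(1-c)^{-2}$ factor, but the manipulations differ. The paper expands $\|h\|_{\nabla^2\Phi(y)}^2$ as
\[
g^\top M^{-1/2}\bigl(M^{1/2}H^{-1}M^{1/2}\bigr)^2 M^{-1/2} g
\]
and then bounds the operator norm of the squared sandwich matrix $M^{1/2}H^{-1}M^{1/2}$ by $(1-c)^{-1}$ using the spectral hypothesis. You instead insert the intermediate quantity $h^\top H h = g^\top H^{-1} g$ and chain the two Loewner inequalities $M \preccurlyeq (1-c)^{-1}H$ and $H^{-1}\preccurlyeq (1-c)^{-1}M^{-1}$, the latter coming from operator monotonicity of $X\mapsto -X^{-1}$. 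Your route avoids matrix square roots entirely and makes explicit that only the lower bound $(1-c)M\preccurlyeq H$ is used; the paper's sandwich form packages both spectral bounds but likewise only invokes the $(1-c)^{-1}$ side in the end. Either way the proof is a few lines.
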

	\begin{proof}[{\bf Proof}]
		We can write
		\begin{align*}
			\|h\|_{\nabla^2 \Phi(y)}
			&=\|H^{-1}\nabla \Phi(y)\|_{\nabla^2 \Phi(y)}\\
			&= \sqrt{\nabla \Phi(y)^\top  H^{-1} \nabla^2 \Phi(y)  H^{-1} \nabla \Phi(y)}\\
			&=  \sqrt{\nabla \Phi(y)^\top  {\nabla^2 \Phi(y)}^{-1/2} \Big({\nabla^2 \Phi(y)}^{1/2} H^{-1} {\nabla^2 \Phi(y)}^{1/2}\Big)^2 {\nabla^2 \Phi(y)}^{-1/2} \nabla \Phi(y)}.\numberthis\label{eq:plug}
		\end{align*}
		For the middle matrix ${\nabla^2 \Phi(y)}^{1/2} H^{-1} {\nabla^2 \Phi(y)}^{1/2}$ we have that
		\begin{align*}
			\frac{1}{1+c}I \preccurlyeq {\nabla^2 \Phi(y)}^{1/2} H^{-1} {\nabla^2 \Phi(y)}^{1/2} \preccurlyeq \frac{1}{1-c}I,
		\end{align*}
		since $(1-c)\nabla^2 \Phi(y) \preccurlyeq H \preccurlyeq (1+c)\nabla^2 \Phi(y)$ by assumption. Plugging this back into \eqref{eq:plug}, we get
		\begin{align*}
			\|h\|_{\nabla^2 \Phi(y)} \leq \frac{1}{1-c}\|\nabla \Phi(y)\|_{{\nabla^2 \Phi(y)}^{-1}}.
		\end{align*}
	\end{proof}
	
	\begin{lemma}[Cauchy-Schwarz]\label{lem:generalizedcaucy}
		If $ -B \preccurlyeq A \preccurlyeq B$ are symmetric matrices and $B$ is PSD, then
		\begin{align*}
			x^\top A y \leq \|x\|_B \|y\|_B.    
		\end{align*}
	\end{lemma}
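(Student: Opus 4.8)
The plan is to reduce the claim to its diagonal ($x=y$) case, which is exactly the hypothesis $-B\preccurlyeq A\preccurlyeq B$ evaluated on a single vector, and then recover the full bilinear bound by polarization plus a scaling trick; in particular no case distinction on whether $B$ is invertible will be needed.

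First I would note that applying $-B\preccurlyeq A\preccurlyeq B$ to an arbitrary $v\in\reals^d$ gives $-v^\top B v\le v^\top A v\le v^\top B v$, i.e.\ $|v^\top A v|\le \|v\|_B^2$. Since $A$ is symmetric, the polarization identity reads $x^\top A y=\tfrac14\big((x+y)^\top A(x+y)-(x-y)^\top A(x-y)\big)$, so the triangle inequality together with the previous bound yields $|x^\top A y|\le \tfrac14\big(\|x+y\|_B^2+\|x-y\|_B^2\big)$. Expanding the right-hand side (the parallelogram law for the quadratic form $\|\cdot\|_B^2$) gives $\|x+y\|_B^2+\|x-y\|_B^2=2\|x\|_B^2+2\|y\|_B^2$, hence $|x^\top A y|\le \tfrac12\big(\|x\|_B^2+\|y\|_B^2\big)$.

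Next I would upgrade this arithmetic-mean bound to the geometric-mean bound by homogeneity: for any $s>0$, applying the inequality just obtained to the pair $(sx,\,s^{-1}y)$ gives $|x^\top A y|\le \tfrac12\big(s^2\|x\|_B^2+s^{-2}\|y\|_B^2\big)$. If $\|x\|_B>0$ and $\|y\|_B>0$, choosing $s^2=\|y\|_B/\|x\|_B$ minimizes the right-hand side and yields $|x^\top A y|\le\|x\|_B\|y\|_B$; if instead, say, $\|x\|_B=0$, letting $s\to\infty$ forces $x^\top A y=0$ and the bound holds trivially (symmetrically if $\|y\|_B=0$). Since $x^\top A y\le|x^\top A y|$, the stated one-sided inequality follows.

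I do not expect a genuine obstacle here. The only mild subtlety is that $B$ may be singular, so $\|\cdot\|_B$ is merely a seminorm; the scaling argument above handles this uniformly, which is why I would prefer it to the alternative route of conjugating by $B^{-1/2}$ and invoking ordinary Cauchy--Schwarz on $B^{1/2}x$ and $B^{1/2}y$ (that approach needs an extra $B+\veps I$, $\veps\downarrow 0$ limiting step to cover the non-invertible case).
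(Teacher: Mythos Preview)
Your proof is correct. The polarization step, the parallelogram identity, and the homogeneity/scaling optimization are all valid, and your treatment of the singular-$B$ case via the limit $s\to\infty$ (or $s\to 0$) is clean.

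As for comparison: the paper does not actually supply a proof of this helper lemma; it is stated without argument at the very end of the appendix. Your write-up therefore fills in a gap that the authors left implicit. The alternative you mention---conjugating by $B^{-1/2}$ so that $\|B^{-1/2}AB^{-1/2}\|\le 1$ and then invoking ordinary Cauchy--Schwarz on $B^{1/2}x$ and $B^{1/2}y$---is likely what the authors had in mind given the lemma's name, but your polarization-plus-scaling route is just as short and, as you note, sidesteps the need for a $B+\veps I$ limiting argument when $B$ is singular.
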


\end{document}